\documentclass{amsart}

\pdfoutput=1
  % make colors and opacity work properly (in acrobat reader)
  \pdfpageattr{/Group <</S /Transparency /I true /CS /DeviceRGB>>}

\marginparwidth=0.75in

%%% preliminary version %%%%%%%
\usepackage{todonotes}
%%%%%%%%%%%%%%%%%%%%%%%%%%%%%%%
  
\makeatletter
\def\ps@pprintTitle{%
 \let\@oddhead\@empty
 \let\@evenhead\@empty
 \def\@oddfoot{}%
 \let\@evenfoot\@oddfoot}
\makeatother

\usepackage{xspace}
\usepackage{marvosym}
\usepackage{microtype}
%% Microtype settings
\microtypesetup{tracking,kerning,spacing}
\microtypecontext{spacing=nonfrench}

\usepackage{hyperref}
\usepackage{amsfonts,amsmath,amssymb,amsthm}
\usepackage{graphicx}
\usepackage{tikz}
\usepackage{pgfplots}
\usepackage{caption}
\usepackage[labelfont=rm]{subcaption}

\usepackage{dsfont}
\usepackage{wasysym}

% for tables

\usepackage{array}
\usepackage{float}
\usepackage{multirow}
\usepackage{stackrel}

\newtheorem{theorem}{Theorem}[section]
\newtheorem{lemma}[theorem]{Lemma}
\newtheorem{proposition}[theorem]{Proposition}
\newtheorem{corollary}[theorem]{Corollary}
\newcounter{mainTheorem}
%\stepcounter{mainTheorem}

\newtheorem{maintheorem}[mainTheorem]{Theorem}
\theoremstyle{definition}
\newtheorem{definition}[theorem]{Definition}

\theoremstyle{remark}
\newtheorem{remark}[theorem]{Remark}

\newtheorem{example}[theorem]{Example}

%% TIKZ commands %%%%

\usetikzlibrary{positioning}

\tikzset{EdgeStyle/.style = {color=black!60, thick}}

\tikzset{VertexStyle/.style = {
    circle, fill=white, draw=black,
    text           = black,
    inner sep      = 2pt,
    outer sep      = 0pt,
    minimum size   = 10 pt}}

\tikzset{BoxVertex/.style = {rectangle, fill=white, draw=black, text = black, inner sep = 2.5pt, outer sep = 0pt, minimum size = 10pt}}

%%% FURTHER DEFINTIONS

\DeclareMathOperator{\supp}{supp}
\DeclareMathOperator{\conv}{conv}
\DeclareMathOperator{\cone}{cone}
\DeclareMathOperator{\sign}{sign}

\newcommand\RR{{\mathbb R}}
\newcommand\ZZ{{\mathbb Z}}

\newcommand\Lcal{\mathcal{L}}

\newcommand\ground{\mathsf{E}}
\newcommand\rows{\mathsf{R}}
\newcommand\augground{\widetilde{\mathsf{E}}}

\DeclareRobustCommand{\rchi}{{\mathpalette\irchi\relax}}
\newcommand{\irchi}[2]{\raisebox{\depth}{$#1\chi$}}
\newcommand{\ssimplex}{\triangle}

\global\long\def\ee{\mathbf{e}}

\global\long\def\supp{\mathrm{supp}}
\global\long\def\oneone{\mathbf{1}}
\global\long\def\zero{\mathbf{0}}
\global\long\def\conv{\mathrm{conv}}
\global\long\def\bd{\mathrm{bd}}
\global\long\def\lattice{\mathcal{L}}
\global\long\def\orientedmatroid{\mathcal{M}}

\global\long\def\crosspolytope{\diamondsuit}
\global\long\def\cube{\square}
\global\long\def\sign{\mathrm{sign}}
\global\long\def\poset{\mathtt{P}}
\global\long\def\qoset{\mathtt{Q}}
\global\long\def\fmixed{\mathtt{S}}
\global\long\def\str{\mathrm{star}}
\global\long\def\dual{\vee}
\global\long\def\neighbourhood{\mathcal{N}}
\global\long\def\ground{\mathsf{E}}
\global\long\def\rows{\mathsf{R}}
\global\long\def\posmap{\varphi}
\global\long\def\rank{\rho}
\global\long\def\norm#1{\left\Vert #1\right\Vert }

\global\long\def\eqclass#1{\tilde{#1}}
\global\long\def\eqrel{/\negmedspace\sim}
\global\long\def\uni{\bigcup}

\global\long\def\covectors{\mathcal{L}}

\global\long\def\cc{\mathbf{c}}
\global\long\def\octfmixed{\octagon_\fmixed}

%% \makeatletter
%% \@namedef{subjclassname@2020}{
%%   \textup{2020} Mathematics Subject Classification}
%% \makeatother

\makeatletter
\def\input@path{{./IMG2/}{./}}
\makeatother

\graphicspath{{IMG2/}}

\title[Patchworking Oriented Matroids]{Patchworking Oriented Matroids}
%[Oriented Matroids from Triangulations of Products of Simplices]{Oriented Matroids from Triangulations of Products of Simplices II: Topological Construction}

\author{Marcel Celaya, Georg Loho, Chi Ho Yuen}

\address{Technische Universit\"at Berlin, Institut f\"ur Mathematik, Sekr. MA4-1, Stra{\ss}e des 17 Juni 136, D-10623 Berlin}
\email{celaya@math.tu-berlin.de}

\address{Department of Mathematics, London School of Economics and Political Science, London, WC2A 2AE, UK}
\email{g.loho@lse.ac.uk}

\address{Division of Applied Mathematics, Brown University, Providence, RI 02912, USA}
\email{Chi\_Ho\_Yuen@Brown.edu}

\subjclass[2020]{
  52C40; % Oriented matroids in discrete geometry
  05E45, % Combinatorial aspects of simplicial complexes
  %12K99, % Generalization of Fields % 16Y20, % Hyperrings
  %14P25, % Topology of real algebraic varieties
  14T15, % Combinatorial aspects of tropical varieties
  %52B40, % Matroids in convex geometry
  52C30, % Planar arrangements of lines and pseudolines
  57N60, % Cellularity in topological manifolds
  57Q99  % PL-topology - None of the above, but in this section
}

\keywords{
  oriented matroid, triangulation, product of simplices, tropical oriented matroid, topological representation theorem, pseudosphere arrangement, patchworking, PL-topology, poset topology, poset quotient
% matching field, matroid subdivision,
}

%% Combinatorics (math.CO), Optimization and Control (math.OC), Geometric Topology (math.GT)

\begin{document}

% \dedicatory{}

\begin{abstract}
In a previous work, we gave a construction of (not necessarily realizable) oriented matroids from a triangulation of a product of two simplices.
In this follow-up paper, we use a variant of Viro's patchworking to derive a topological representation of the oriented matroid directly from the polyhedral structure of the triangulation, hence finding a combinatorial manifestation of patchworking besides tropical algebraic geometry.
We achieve this by rephrasing the patchworking procedure as a controlled cell merging process, guided by the structure of tropical oriented matroids. 
A key insight is a new promising technique to show that the final cell complex is regular.

%  We introduce a construction of oriented matroids from a triangulation of a product of two simplices.
%  This is the second part of our paper, focusing on the topological aspect of our construction.
%  In particular, we derive a topological representation of the oriented matroid from the polyhedral structure of the triangulation.
%  This relies on a variant of Viro's patchworking and insights on the structure of tropical oriented matroids. 
%  A recurring theme in our work is that various tropical constructions can be extended beyond tropicalization with new formulations and proof methods.
\end{abstract}

\maketitle

%To-do-list:
%\begin{itemize}
%\item Double check if any comment from Josephine Yu or Laura Anderson concern this part
%\item Address the question about flips in triangulation and the corresponding OMs.
%\item Elaborate a bit more with the relation/comparison with Horn (mention~\cite[Proposition 6.5]{Horn1}) and Hersh?
%\item Work out Jesus's non-regular example
%\item Explain why regularity is not obvious and require a (technical) proof
%\end{itemize}

\section{Introduction}

Oriented matroids are ordinary matroids equipped with extra sign data, which capture and extend the combinatorics of directed graphs, real hyperplane arrangements, and more generally linear dependence over $\RR$.
They appear in many subjects in mathematics and related areas, from discrete geometry and optimization algorithms to algebraic geometry and topology; we refer the reader to \cite[Chapter 1 \& 2]{BLSWZ:1993} for more examples.
Besides having various equivalent combinatorial axiom systems, a major result in the theory of oriented matroids is the {\em Topological Representation Theorem} of Folkman and Lawrence \cite{FolkmanLawrence:1978}, which states that every oriented matroid can be represented by a {\em pseudosphere arrangement} (a topological generalization of real hyperplane arrangements) and vice versa.

Inspired by the work of Sturmfels and Zelevinsky on maximal minors, and connections with tropical geometry and optimization problems, the authors of the current paper introduced in \cite{CelayaLohoYuen:2020} a construction of (uniform) oriented matroids from triangulations of $\ssimplex_{d-1}\times\ssimplex_{n-1}$ with suitable sign data.
While the case of regular triangulations is implicit in the literature using {\em signed tropicalization}, considering general triangulations allows us to obtain non-realizable oriented matroids, including Ringel's classical example (see \cite[Section~4.2]{CelayaLohoYuen:2020}).
The construction given in~\cite{CelayaLohoYuen:2020} expresses an oriented matroid using a chirotope, which assigns signs to the ordered bases of the underlying matroid.
More precisely, by encoding the cells of the triangulations by forests of the complete bipartite graph $K_{d,n}$, every basis is associated with a matching, and the sign of the basis is the sign of the matching as a permutation.
However, in view of the polyhedral structure of a triangulation, it is natural to ask whether the topological realization of such oriented matroids can be related to the triangulation directly.
In this paper, we provide a construction to achieve this.

To do so, we adapt the method of patchworking, which goes back to Viro in the 1980s~\cite{Viro:1983}.
Viro's method has numerous applications in real algebraic geometry and tropical geometry (see the survey by Viro \cite{Viro:2006}\footnote{The title of our paper is inspired by the title of this survey.}), and is related to the Gelfand--Kapranov--Zelevinsky theory~\cite{GelfandKapranovZelevinsky:1994}.
The idea of (combinatorial) patchworking is that one can construct piecewise linear objects isotopic to real algebraic varieties by some ``cut and paste'' procedure, starting with a {\em regular} subdivision of a Newton polytope with sign data.
Sturmfels used this idea in~\cite{Sturmfels:1994a} to study complete intersections, where mixed subdivisions play the crucial role to derive the structure of the intersecting hypersurfaces.
While the latter are focused on the study of the intersections, we deal with the whole cellular complex cut out by them. 
%only concerned with the structure of the intersection itself, we use this variant of patchworking to generalize the encoding of all possible intersections of linear hypersurfaces.

\begin{maintheorem} \label{thm:representation}
Given a fine mixed subdivision of $n\ssimplex_{d-1}$ and a sign matrix, we can construct a pseudosphere arrangement representing the oriented matroid in \cite{CelayaLohoYuen:2020} via a patchworking procedure.
\end{maintheorem}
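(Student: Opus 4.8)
The plan is to realize the desired arrangement on an explicit PL‑sphere as a quotient of a manifestly regular cell complex, and then certify that the quotient is again regular using the tropical‑oriented‑matroid structure carried by the fine mixed subdivision; I expect the last step to be the real obstacle. By the Cayley trick, the given fine mixed subdivision of $n\ssimplex_{d-1}$ is the same datum as a triangulation of $\ssimplex_{d-1}\times\ssimplex_{n-1}$, hence a fine, generic tropical oriented matroid; in particular it simultaneously refines the subdivisions of $n\ssimplex_{d-1}$ cut out by the $n$ tropical hyperplanes $\mathcal H_1,\dots,\mathcal H_n$ coming from the Minkowski summands. First I would identify $S^{d-1}$ with the boundary of the $d$‑dimensional cross‑polytope, whose $2^d$ facets are copies of $\ssimplex_{d-1}$ indexed by sign vectors $\epsilon\in\{+,-\}^d$. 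Into each facet I glue a reflected copy of the given mixed subdivision, in Viro's fashion, using the $\epsilon$‑reflected version of the input sign matrix as local sign data. Since the restriction of a mixed subdivision to a face is well defined, the copies agree on shared faces and assemble into a PL --- hence regular CW --- sphere $\Sigma$ tiled by $2^d$ subdivided simplices. Running the combinatorial patchworking rule on $\mathcal H_1,\dots,\mathcal H_n$ simultaneously produces subcomplexes $\widehat{\mathcal H}_1,\dots,\widehat{\mathcal H}_n\subseteq\Sigma$, and the usual Viro argument (fineness of the subdivision) shows each $\widehat{\mathcal H}_j$ is a PL $(d-2)$‑sphere with two sides $\widehat{\mathcal H}_j^{\pm}$.

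\textbf{Controlled cell merging and the combinatorial identification.}
The faces of $\Sigma$ are finer than the faces of the arrangement $\{\widehat{\mathcal H}_j\}$, because $\Sigma$ still remembers the whole fine mixed subdivision. The patchworking procedure is then \emph{cell merging}: amalgamate two faces of $\Sigma$ exactly when they have the same sign vector relative to $\widehat{\mathcal H}_1,\dots,\widehat{\mathcal H}_n$, i.e.\ lie in the same face of the arrangement --- this sign vector being read off from the position of the face in the mixed subdivision, the orthant $\epsilon$, and the sign matrix. Call the resulting cell complex $\Pi$, with distinguished subcomplexes $H_1,\dots,H_n$ induced by the $\widehat{\mathcal H}_j$. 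It then remains to prove (i) the sign vectors of the faces of $\Pi$ are precisely the covectors of the oriented matroid of \cite{CelayaLohoYuen:2020}; and (ii) $\Pi$ is an arrangement of pseudospheres. For (i) I would fix the dictionary between a mixed cell, its position relative to the $\mathcal H_j$, and a sign vector, and match it against the forest/matching description of bases and their signs in \cite{CelayaLohoYuen:2020}, passing between the chirotope and the covectors by the usual oriented matroid duality; once the dictionary is in place this is bookkeeping, and it is (ii) that carries the topological content.

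\textbf{Regularity --- the main obstacle.}
A face of $\Pi$ is a union of faces of $\Sigma$, and a priori neither is such a union a ball nor is $\Pi$ a regular CW complex; this is exactly why the merging must be ``controlled''. I would encode the merging as a surjection of face posets $q\colon \mathcal{F}(\Sigma)\twoheadrightarrow \mathcal{F}(\Pi)$ and prove, from the compatibility and elimination axioms of the tropical oriented matroid attached to the fine mixed subdivision, that the merging pattern is ``acyclic'' in the sense that $q$ is induced by a closure operator on $\mathcal{F}(\Sigma)$: the fiber of $q$ over any face, over any lower interval, and over any link, has a cone point. A Quillen‑type poset‑fiber lemma then promotes ``every link in $\Sigma$ is a sphere'' to ``every link in $\Pi$ is a sphere,'' while a parallel PL induction --- each elementary amalgamation glues two PL‑balls along a common boundary PL‑ball --- shows every closed face of $\Pi$ is a PL‑ball; hence $\Pi$ is a regular CW sphere. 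Running the same analysis on the subfamilies $\{\,\widehat{\mathcal H}_j : j\in J\,\}$ shows each intersection $\bigcap_{j\in J} H_j$ is a sphere with the required sidedness, so $\Pi$ is a genuine pseudosphere arrangement, and its antipodal symmetry is inherited from the pairing $\epsilon\mapsto-\epsilon$ of the $2^d$ copies. Combined with (i), this gives Theorem~\ref{thm:representation}. The delicate point --- and where I expect the paper's new technique to be required --- is establishing the closure‑operator property of $q$ from the tropical‑oriented‑matroid axioms uniformly across all links, as this is what simultaneously yields sphericity of links and ball‑ness of closed faces.
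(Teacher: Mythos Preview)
Your high-level strategy matches the paper's: reflect the fine mixed subdivision into all orthants to get a regular CW sphere, merge cells that carry the same sign data, and argue that the tropical oriented matroid axioms control the merging well enough that the result is again a regular cell complex with the correct face poset. You also correctly identify regularity of the merged complex as the crux.

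Where the technical routes diverge is in how regularity is obtained. You propose a closure-operator/Quillen-fiber argument on the surjection of face posets, aiming to show each fiber is conical. The paper instead works on the \emph{dual} of an extended complex (the boundary of $\cube_d+n\crosspolytope_d$, not the cross-polytope), defines the equivalence $\sim_A$ explicitly on pairs $(S,F)$, and proves that the resulting quotient factors into \emph{elementary quotients}: at each step every nontrivial class consists of exactly three cells $\sigma,\tau,\gamma$ with $\sigma,\tau$ covering $\gamma$. The elimination axiom (your ``compatibility and elimination axioms'') is used precisely once, to manufacture the third element of such a triple; no closure operator or fiber lemma is invoked. Each elementary step then glues two PL balls along a common codimension-one PL ball, and a short PL lemma shows this preserves regularity. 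This buys concreteness: one never has to analyze links globally or verify a cone-point condition uniformly, only check a single three-cell picture. Your route might be made to work, but as stated the closure-operator property is asserted rather than derived from the axioms.

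There is also a gap earlier in your outline: you write that ``the usual Viro argument (fineness of the subdivision) shows each $\widehat{\mathcal H}_j$ is a PL $(d-2)$-sphere.'' That argument relies on the subdivision being \emph{regular} (coherent), which is exactly the hypothesis the paper drops; for non-regular subdivisions there is no degeneration to invoke, and sphericity of the individual patchworked hypersurfaces is part of what must be proved. The paper sidesteps this entirely: once the merged complex is shown to be regular with face poset isomorphic to the covector lattice of $\widetilde{\orientedmatroid}$ (surjectivity via Borsuk--Ulam, not by direct bookkeeping), it appeals to the black-box \cite[Theorem~4.3.3]{BLSWZ:1993} that any such regular cell complex is automatically a pseudosphere arrangement. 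Thus sphericity of the $\Delta_k$ and of all their intersections comes for free at the end, and you do not need to check it family by family as you propose.
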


From this patchworking procedure, we implicitly derive an abstract real phase structure in the sense of~\cite{BBR:2017,RenaudineauShaw:2019} from the interplay of the subdivision and the sign matrix. 
Since most works on patchworking aim to construct real algebro geometric objects, their proofs usually use {\em tropicalization} of polynomials or similar techniques.
In contrast, the aforementioned non-realizable example shows that we can produce non-algebro geometric objects.
This suggests that patchworking could be applied for other topological problems beyond tropicalization.

Our proof uses a combination of combinatorial and topological methods, and is loosely based on Horn's second Topological Representation Theorem for tropical oriented matroids \cite{Horn1}. Roughly speaking, we show that it is possible to interpolate between the dual complex of a patchworking complex, which may be regarded as a cell decomposition of the boundary of the sphere, and a pseudosphere arrangement representing our oriented matroid. This is done by carefully ``merging'' cells together, ensuring at each step that the combinatorics and the topology are controlled. A similar technique was used by Hersh in her work on total positivity \cite{Hersh:2014}. Actually, our results imply a ``Topological Representation Theorem'' for each interpolation step between Horn's result and the result of Folkman and Lawrence. 

We note the work of De Loera and Wicklin in~\cite{DeLoeraWicklin:1998} which extends and studies patchworking in dimension two giving rise to a combinatorial version of Hilbert's Lemma. Furthermore, Itenberg and Shustin derive for dimension two in~\cite{ItenbergShustin:2002} that patchworking with arbitrary subdivisions produces real pseudoholomorphic curves. However, it seems not much work on patchworking with general subdivisions has been done in higher dimension before us.

The paper is organized as follows.
In Section~\ref{sec:background}, we collect essential definitions and background for the central objects in this paper, as well as a summary of results from \cite{CelayaLohoYuen:2020} which are needed in this part.
Section~\ref{sec:patchwork} is devoted to stating the main theorem, Theorem~\ref{thm:representation}, and contains an illustration of the rank 3 case. Sections \ref{sec:elim_system} and \ref{sec:quotients_reg_cell_cpxs} elaborate on the two main ingredients in the proof of Theorem~\ref{thm:representation}, namely \emph{elimination systems} and \emph{quotients of regular cell complexes}. The main theorem itself is proved in Section~\ref{sec:pf_main_theorem}. The dependencies of each of the sections on each other are shown below:

\[
\begin{array}{ccccc}
 \ref{sec:OM}\text{-}\ref{sec:part+I} & \longrightarrow & \ref{sec:patchwork}\\
  &  &  & \searrow\\
 \ref{sec:poset_lattice_quotients} & \longrightarrow & \ref{sec:elim_system} & \longrightarrow & \ref{sec:pf_main_theorem}\\
 & \searrow &  & \nearrow\\
  &  & \ref{sec:quotients_reg_cell_cpxs}
\end{array}
\]

\section{Background} \label{sec:background}

%\todo[inline]{YCH: Again need to revise to avoid self-plagiarism.}

Throughout the paper, we fix a ground set $\ground$ of size $n$ and a set $\rows$ of size $d \leq n$.
We often identify $\ground,\rows$ with $[n] = \{1,2,\dots,n\}$ and $[d]$, hence fixing an ordering for them.
We use $\{+,-,0\}$ and $\{1,-1,0\}$ for signs interchangeably, and we adopt the ordering $+,->0$ of signs.
This is extended componentwise to a partial order on sign vectors. For a sign vector $X$ and a sign $s\in\{+,-,0\}$, we denote by $X^s$ the set of all indices $e$ such that $X_e = s$.

\subsection{Oriented Matroids} \label{sec:OM}

We refer the reader to \cite{BLSWZ:1993} for a comprehensive survey on oriented matroids.
%The following definition is used to describe oriented matroids in \cite{CelayaLohoYuen:2020}. \todo{GL: Is the last sentence really necessary? It sounds like we use an unusual definition in our part I. }

\begin{definition} \label{def:chirotope}
A {\em chirotope} on $\ground$ of rank $d$ is a non-zero, alternating map $\rchi:\ground^d\rightarrow\{+,-,0\}$ that satisfies the {\em Grassmann--Pl\"{u}cker (GP) relation}:\\
For any $x_1,\ldots,x_{d-1},y_1,\ldots, y_{d+1}\in \ground$, the $d+1$ expressions 
\begin{equation*}
(-1)^k \rchi(x_1,\ldots,x_{d-1},y_k)\rchi(y_1,\ldots,\widehat{y_k},\ldots,y_{d+1}), \qquad k=1,\ldots, d+1,
\end{equation*}
either contain both a positive and a negative term, or are all zeros.
Here $\widehat{y_k}$ means that we remove $y_k$ from the list. 

By the alternating property, we can specify a chirotope by its values over all $d$-tuples of strictly increasing elements, which are identified with $d$-subsets of $\ground$.
\end{definition}

%\begin{example} \label{ex:realizable+oriented+matroid}
% A chirotope is the generalization of the signs of maximal minors of a real matrix (oriented matroids coming from this way are said to be {\em realizable}).
%  More precisely, let $A \in \RR^{d \times n}$ be a rectangular matrix with rank~$d$.
%  Then
%  \[
%  \rchi(j_1,j_2,\dots,j_d) = \sign\det\left(a^{(j_1)},a^{(j_2)},\dots,a^{(j_d)}\right) ,
%  \]
%  where $a^{(j_k)}$ denotes columns of $A$, is the chirotope of an oriented matroid of rank $d$.
%\end{example}

For the purpose of topological constructions, we switch to an alternative axiom system.
We recall that the {\em composition} $X \circ Y$ of two signed vectors agrees with $X$ in all positions $e \in \ground$ with $X_e \neq 0$, and agrees with $Y$ otherwise.

\begin{definition} \label{def:covector}
A collection of sign vectors $\Lcal\subset\{+,-,0\}^\ground$ is the collection of {\em covectors} of an oriented matroid if
\begin{enumerate}
\item ${\bf 0}\in\Lcal$.
\item If $X\in\Lcal$, then $-X\in\Lcal$.
\item If $X,Y\in\Lcal$, then $X \circ Y\in\Lcal$.
\item For any $X,Y\in\Lcal$ and $e\in X^+\cap Y^-$, there exists $Z\in\Lcal$ such that $Z_e=0$, and $Z_f = (X \circ Y)_f =(Y\circ X)_f$ for all $f$ for which the latter equality holds.
%such that $Z^+\subset (X^+\cup Y^+)\setminus\{e\}$, $Z^-\subset (X^-\cup Y^-)\setminus\{e\}$, and $(\underline{X}\triangle\underline{Y})\cup(X^+\cap Y^+)\cup(X^-\cap Y^-)\subset\underline{Z}$.
\end{enumerate}
\end{definition}

\begin{example} \label{ex:realizable+oriented+matroid+covector}
Let $M$ be an oriented matroid realized by the real matrix $A \in \RR^{d \times n}$, i.e.,
\[
\rchi(j_1,j_2,\dots,j_d) = \sign\det\left(a^{(j_1)},a^{(j_2)},\dots,a^{(j_d)}\right).
\]
The covectors of $M$ are precisely the sign patterns of the vectors in the row space of $A$.
\end{example}

Finally, we give the definition of pseudosphere arrangements in the statement of the Topological Representation Theorem mentioned in the introduction.

\begin{definition} \label{def:pseudosphere+arrangement}
A {\em pseudosphere arrangement} of rank $d$  is a collection $(S_e:e\in \ground)$ of $(d-2)$-spheres piecewise-linearly (PL), central symmetrically embedded on $S^{d-1}$ together with sign data, i.e., for each $S_e$, specify a positive and a negative side for the two connected components of $S^{d-1}\setminus S_e$. 
Furthermore, we require that for any $\ground'\subset \ground$, $S_{\ground'}:=\bigcap_{e\in \ground'} S_e$ is also a PL sphere, and that for every other $S_e$, either $S_{\ground'}\subset S_e$ or $S_{\ground'}\cap S_e$ is a PL sphere of codimension 1 within $S_e$.
\end{definition}

The face lattice of such an arrangement is isomorphic to the {\em covector lattice} of the oriented matroid; we again refer the reader to \cite[Chapter~5]{BLSWZ:1993} for details.

\subsection{Triangulations of $\ssimplex_{d-1}\times\ssimplex_{n-1}$ and Polyhedral Matching Fields} \label{sec:triangulation}

We refer the reader to \cite{DeLoeraRambauSantos:2010}, respectively \cite{CelayaLohoYuen:2020,LohoSmith:2020}, for details in polyhedral geometry and matching fields.
We denote the $(k-1)$-simplex, respectively the product of a $(d-1)$-simplex and an $(n-1)$-simplex, by $\ssimplex_{k-1}$ and  respectively $\ssimplex_{d-1}\times\ssimplex_{n-1}$.
We fix their embeddings in $\RR^k$ (respectively $\mathbb{R}^d\times\mathbb{R}^n $) as $\conv\{{\bf e}_i \colon i \in [k]\}$ (respectively $\conv\{ ({\bf e}_i,{\bf e}_j) \colon i \in [d],j \in [n] \}$).
%We give a brief introduction to the necessary polyhedral notions and refer the reader to~\cite{DeLoeraRambauSantos:2010} for more details.
%For a more detailed introduction to matching fields, we refer to \cite{CelayaLohoYuen:2020}. 
%We denote the $(k-1)$-simplex by $\ssimplex_{k-1}$, which is embedded in $ \RR^k$ as $\conv\{{\bf e}_i \colon i \in [k]\}$.
%The product $\ssimplex_{d-1}\times\ssimplex_{n-1}$ of a $(d-1)$-simplex and an $(n-1)$-simplex is the convex hull
%\[
%\ssimplex_{d-1}\times\ssimplex_{n-1} = \conv\{ ({\bf e}_i,{\bf e}_j) \colon i \in [d],j \in [n] \}\subset\mathbb{R}^d\times\mathbb{R}^n \enspace .
%\]

A collection $\mathcal{T}$ of full-dimensional simplices is a \emph{triangulation} of $\ssimplex_{d-1}\times\ssimplex_{n-1}$  if
%A \emph{triangulation} of $\ssimplex_{d-1}\times\ssimplex_{n-1}$ is a collection of full-dimensional simplices $\mathcal{T}$ whose vertices form a subset of the vertices of $\ssimplex_{d-1}\times\ssimplex_{n-1}$, such that
\begin{enumerate}
\item the vertices of each simplex is a subset of the vertices of $\ssimplex_{d-1}\times\ssimplex_{n-1}$,
\item the union of all simplices in $\mathcal{T}$ is $\ssimplex_{d-1}\times\ssimplex_{n-1}$, 
\item the intersection of any two simplices in $\mathcal{T}$ is a common face of them.
\end{enumerate}

By identifying the vertices of $\ssimplex_{d-1}\times\ssimplex_{n-1}$ with the edges of the complete bipartite graph $K_{\rows, \ground}$, each full-dimensional simplex satisfying (1) gives rise to a spanning tree of $K_{\rows, \ground}$.
A combinatorial characterization of when a collection of trees forms a triangulation is given in \cite[Proposition 7.2]{ArdilaBilley:2007}.
%Each simplex in $\mathcal{T}$ gives rise to a spanning tree of the complete bipartite graph on the vertex set $\rows \sqcup \ground$ by identifying a vertex $({\bf e}_i,{\bf e}_j)$ with an edge, see \cite[Proposition 7.2]{ArdilaBilley:2007} for a combinatorial characterization of the collection of such trees.

\begin{definition}[{\cite[Section~2.3]{CelayaLohoYuen:2020}}] \label{ex:polyhedral+matching+field}
A \emph{polyhedral matching field} is the collection of all $\rows$-saturating matchings (those covering all nodes in $\rows$) that are subgraphs of the trees encoding a triangulation of $\ssimplex_{d-1}\times\ssimplex_{n-1}$, which consists of exactly one perfect matching $M_\sigma$ between $\rows$ and $\sigma$ for every $d$-subset $\sigma\subset\ground$.
\end{definition}

We describe another (larger) matching field induced from a triangulation, which comprises the full information of the original triangulation.
We augment the ground set $\ground$ by a copy $\widetilde{\rows}$ of $\rows$ to obtain a ground set $\augground$ of size $n+d$, and we set all elements of $\widetilde{\rows}$ to be smaller than all elements of $\ground$.
%\todo{YCH: Do we really need the ordering of $\augground$ in this paper?}
The collection $\widetilde{\mathcal{T}}$ contains, for every tree $T$ in $\mathcal{T}$, the tree on $\rows \sqcup \augground$ obtained from $T$ by adding an edge between $i$ and its copy for every $i\in\rows$.

%A special class of polyhedral matching fields are the \emph{pointed} ones (following the terminology in~\cite{SturmfelsZelevinsky:1993}), which comprise the full information of a triangulation of $\ssimplex_{d-1}\times\ssimplex_{n-1}$.
%We augment the ground set $\ground$ by a copy $\widetilde{\rows}$ of $\rows$ to obtain a ground set $\augground$ of size $n+d$, and we set all elements of $\widetilde{\rows}$ to be smaller than all elements of $\ground$.
%To take the full information of all trees into account, for each tree in $\mathcal{T}$ we add edges between each node in $\rows$ to its copy $\widetilde{\rows}$, yielding a set $\widetilde{\mathcal{T}}$ of trees on $\rows \sqcup \augground$.

\begin{definition} \label{def:pointed+polyhedral}
  The \emph{pointed polyhedral matching field} associated with a triangulation $\mathcal{T}$ of $\ssimplex_{d-1}\times\ssimplex_{n-1}$ is the collection of $\rows$-saturating matchings on $\rows \sqcup \augground$ that are subgraphs of the trees in $\widetilde{\mathcal{T}}$. 
\end{definition}

%The discussion after \cite[Theorem 3.16]{LohoSmith:2020} shows that the latter construction actually yields a correspondence between triangulations and matching fields.

%One can see that pointed polyhedral matching fields are actually polyhedral matching fields by extending the original triangulation of $\ssimplex_{d-1}\times\ssimplex_{n-1}$ to a triangulation of $\ssimplex_{d-1}\times\ssimplex_{n+d-1}$ by using a placing triangulation as in~\cite[Lemma~4.3.2]{DeLoeraRambauSantos:2010}.
%Note that, on the other hand, each polyhedral matching field is a sub-matching field of a pointed polyhedral matching field.
%We use both points of view as they allow different constructions as we see in Section~\ref{sec:patchwork}.
%It is similar to the relation between transversal matroids and fundamental transversal matroids, and it is reminiscent of the correspondence between matroids and linking systems~\cite{Schrijver:1979}.

\bigskip

%Cutting appropriately through a triangulation of $\ssimplex_{d-1}\times\ssimplex_{n-1}$ yields a \emph{fine mixed subdivision} of $n\ssimplex_{d-1}$.
%This is formalized as the \emph{Cayley trick}, see~\cite{Santos:2005}. 
The \emph{Cayley trick} \cite{Santos:2005} establishes a bijective correspondence between triangulations of $\ssimplex_{d-1}\times\ssimplex_{n-1}$ and \emph{fine mixed subdivisions} of the dilated simplex $n\ssimplex_{d-1}$ as follows:
%A direct way to identify the polyhedral pieces in such a subdivision of $n\ssimplex_{d-1}$ is the following.
For each tree $G$ corresponding to a simplex in the triangulation $\mathcal{T}$, we form the Minkowski sum
\[
\sum_{j \in \ground} \conv \{ {\bf e}_i \colon i \in \mathcal{N}_G(j) \} \enspace ,
\]
where $\neighbourhood_G(j)$ is the neighbourhood of an element $j \in \ground$ in $G$.
The collection of these Minkowski sums tiles $n\ssimplex_{d-1}$.

In \cite{ArdilaDevelin:2009}, Ardila and Develin studied the dual of these mixed subdivisions as \emph{tropical pseudohyperplane arrangements}, which generalizes tropical hyperplane arrangements as the dual of coherent mixed subdivisions~\cite{DevelinSturmfels:2004}.
In \cite{Horn1,OhYoo:2011}, it was shown that these objects are equivalent to {\em tropical oriented matroids}, defined by purely combinatorial axioms back in \cite{ArdilaDevelin:2009}.
%The dual polyhedral complexes to these mixed subdivisions were introduced as \emph{tropical pseudohyperplane arrangements} in~\cite{ArdilaDevelin:2009}.
%This draws from the correspondence between tropical hyperplane arrangements and regular subdivisions of products of simplices established in~\cite{DevelinSturmfels:2004}.
%After starting from an axiomatic study of \emph{tropical oriented matroids} in~\cite{ArdilaDevelin:2009}, it was subsequently shown that these combinatorial objects are indeed \emph{cryptomorphic} to subdivisions of $\ssimplex_{d-1}\times\ssimplex_{n-1}$ and tropical pseudohyperplane arrangements, partially in~\cite{OhYoo:2011} and finished in~\cite{Horn1}.

\begin{figure}[htb]
  \centering
  \includegraphics[scale=0.5]{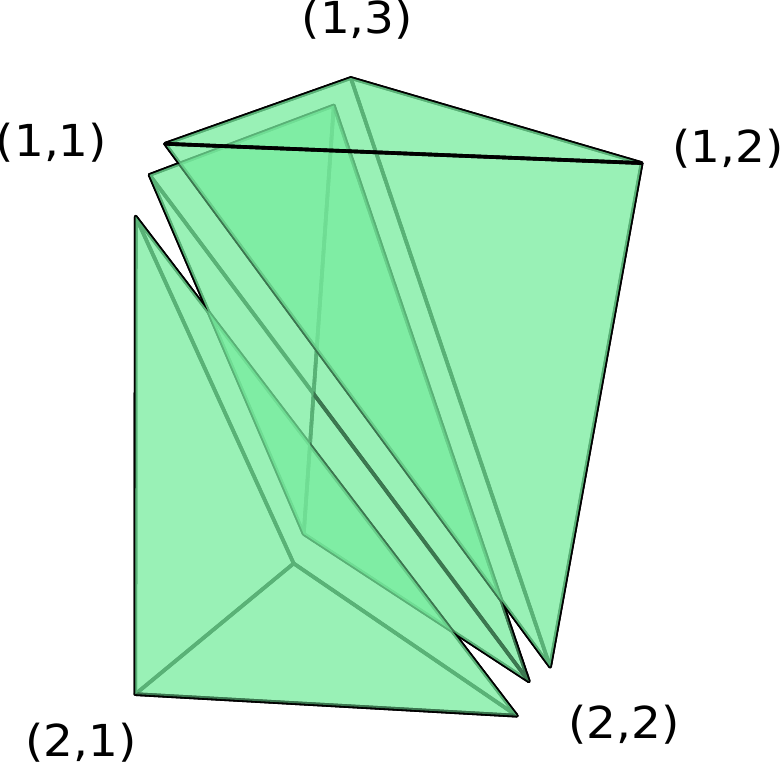}
  \caption{A triangulation of $\ssimplex_1\times\ssimplex_2$. The vertices are labeled by the corresponding edges in $K_{2,3}$. This picture was created with \texttt{polymake} \cite{DMV:polymake}.}
  \label{fig:regular-subdivision-prism}
\end{figure}

\subsection{Oriented Matroids from Triangulations of $\ssimplex_{d-1}\times\ssimplex_{n-1}$}\label{sec:part+I}

%\todo{GL: Do we want to have the corresponding numbers of the results in Part I here? }

We recall the results of \cite{CelayaLohoYuen:2020} which are needed in this paper.
For the rest of this paper, we fix a sign matrix $A\in\{-,+\}^{\rows\times\ground}$ and a polyhedral matching field $(M_{\sigma})$ extracted from a triangulation of $\ssimplex_{d-1}\times\ssimplex_{n-1}$.
We also denote by $\widetilde{(M_{\sigma})}$ the pointed polyhedral matching field encoding the starting triangulation, and by $\widetilde{A} $ the sign matrix $(I_{d,d} | A)$. 

Using the ordering on $\rows$ and $\sigma\subset\ground$, we can interpret a matching $M_{\sigma}$ as a permutation, and we define the sign of the matching by the sign of the permutation.

\begin{theorem}\cite[Theorem~A]{CelayaLohoYuen:2020} \label{thm:main}
The sign map $\rchi:\binom{\ground}{d}\rightarrow\{+,-\}$ given by
\begin{equation} \label{eq:sign+map+matching+field}
  \sigma\mapsto\sign(M_\sigma)\prod_{e\in M_\sigma}A_e \enspace ,
\end{equation}
is the chirotope of an oriented matroid.

%Moreover, for each tree $T\in\Ical$, the restriction of $\rchi$ to the bases of $M(T)$ is a realizable chirotope realized by any matrix  $A(T)$ obtained in the following manner: set every entry of $A$ not in $T$ to $0$ and replace every remaining non-zero entry by an arbitrary real number of the same sign.
\end{theorem}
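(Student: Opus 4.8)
The plan is to verify that $\rchi$ satisfies the Grassmann--Plücker relations of Definition~\ref{def:chirotope}, deducing them for an arbitrary triangulation from the realizable (regular) case by a localization argument. Since $\rchi(\sigma)=\sign(M_\sigma)\prod_{e\in M_\sigma}A_e$ is a product of signs it never vanishes, so the underlying matroid is uniform and every $d$-subset is a basis. Fix a $(d-1)$-tuple $X=(x_1,\dots,x_{d-1})$ and a $(d+1)$-tuple $Y=(y_1,\dots,y_{d+1})$; the relation attached to them involves only the values of $\rchi$ on the $d$-subsets $X\cup\{y_k\}$ and $Y\setminus\{y_k\}$, all of which are $d$-subsets of $X\cup Y$. (When $X$ and $Y$ share $d-2$ elements this is the classical three-term relation, the essential case, but the mechanism below is identical in general.)

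I would first recall why the statement holds when $\mathcal{T}$ is regular, induced by a height function $w$ on the edges of $K_{\rows,\ground}$. Over a field of Puiseux series the matrix $B=(A_{ij}\,t^{w_{ij}})$ realizes an oriented matroid whose chirotope is exactly $\rchi$: by the definition of the matching field, the tropical minimum $\min_M\sum_{e\in M}w_e$ over perfect matchings $M$ of $K_{\rows,\sigma}$ is attained \emph{uniquely} at $M=M_\sigma$, so the lowest-order term of $\det B_\sigma$ is $\rchi(\sigma)\,t^{v(\sigma)}$ with $v(\sigma)=\sum_{e\in M_\sigma}w_e$. Substituting $B$ into the ordinary Grassmann--Plücker identity $\sum_k(-1)^k\det B_{X\cup\{y_k\}}\det B_{Y\setminus\{y_k\}}=0$ and collecting the terms of least valuation, the summands minimizing $v(X\cup\{y_k\})+v(Y\setminus\{y_k\})$ must cancel among themselves; a single nonzero summand cannot cancel, so there are at least two, and they carry opposite signs --- which is exactly the required Grassmann--Plücker sign condition.

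For a general $\mathcal{T}$ there is no global height function, and localizing this argument is the crux. The relation for $(X,Y)$ only sees the at most $2(d+1)$ maximal simplices of $\mathcal{T}$ supporting the matchings $M_{X\cup\{y_k\}}$ and $M_{Y\setminus\{y_k\}}$, and the key lemma I would establish is that this finite sub-configuration extends to a \emph{regular} triangulation: there is a height function $w^{X,Y}$ whose induced matching field agrees with $(M_\sigma)$ on every $d$-subset of $X\cup Y$, with each such $M_\sigma$ still the unique tropical matching. Granting this, the regular-case argument applied to $w^{X,Y}$ yields the relation for $(X,Y)$, and since $(X,Y)$ was arbitrary, $\rchi$ is a chirotope. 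I expect this extension lemma to be the main obstacle: a non-regular triangulation carries a global linear obstruction to any compatible height function, and one must argue that this obstruction cannot already live inside the bounded sub-configuration seen by a single relation, while still choosing $w^{X,Y}$ generically enough that all relevant tropical minima stay unique and the prescribed signs from $A$ are respected. The natural inputs here are Ardila--Billey's combinatorial characterization of which families of trees triangulate $\ssimplex_{d-1}\times\ssimplex_{n-1}$, and the linkage property enjoyed by matching fields coming from triangulations. A variant that avoids constructing $w^{X,Y}$ explicitly would be to perform the valuation-and-cancellation bookkeeping of the regular case purely combinatorially, tracking signs of permutations through the cycle decomposition of the symmetric differences $M_{X\cup\{y_k\}}\,\triangle\,M_{Y\setminus\{y_k\}}$, with linkage replacing uniqueness of the tropical minimum; in either form, the heart of the proof is converting the polyhedral hypothesis that $\mathcal{T}$ is a triangulation into the combinatorial input that forces the Grassmann--Plücker sign pattern.
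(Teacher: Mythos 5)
First, a point of comparison that matters here: the paper you were given does not prove this statement. Theorem~\ref{thm:main} is quoted verbatim from \cite{CelayaLohoYuen:2020} as background (Section~\ref{sec:part+I} begins ``We recall the results of \cite{CelayaLohoYuen:2020} which are needed in this paper''), so there is no in-paper proof to measure your attempt against; I am judging the proposal on its own merits and against the proof in the cited source. Within your proposal, the easy parts are fine: $\rchi$ is nowhere zero, so the underlying matroid is uniform and the standard reduction to the three-term Grassmann--Pl\"ucker relations applies; and for a regular triangulation the Puiseux lift $B=(A_{ij}t^{w_{ij}})$ together with the classical Pl\"ucker identity and the lowest-valuation cancellation argument does give the sign condition, provided each tropical minimum is uniquely attained.

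The genuine gap is that the entire content of the theorem has been relocated into your unproven ``key lemma'': that the matching field restricted to the $d+2$ columns seen by a single three-term relation (up to $2d$ columns in general) is coherent, i.e., agrees on all relevant $d$-subsets with the matching field of some height function having unique tropical minima. You flag this as the main obstacle but offer no argument, and it is not a routine consequence of the Ardila--Billey characterization or of linkage. The sub-configuration in question is the restriction of $\mathcal{T}$ to the face $\ssimplex_{d-1}\times\ssimplex_{d+1}$, which for larger $d$ can itself be non-regular, and whether every such triangulation nonetheless carries a coherent matching field is exactly the sort of local-realizability question the theorem is designed to avoid; your heuristic that ``a global obstruction cannot live inside a bounded sub-configuration'' is the statement to be proved, not a justification for it. The proof in \cite{CelayaLohoYuen:2020} does not go this way: it verifies the three-term relations directly and combinatorially from the structure of the trees of $\mathcal{T}$ and the matchings they contain --- essentially the one-sentence ``variant'' you mention at the end, with sign bookkeeping through the union of the relevant matchings replacing uniqueness of a tropical minimum. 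As written, your proposal is a correct reduction of the theorem to an independently substantial and unestablished claim, so it does not yet constitute a proof.
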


We denote the oriented matroid described by $\rchi$ as  $\orientedmatroid$.
Similarly, $\widetilde{(M_{\sigma})}$ induces an oriented matroid $\widetilde{\orientedmatroid}$ on $\augground$.\\

Now, we describe how to convert cells of a fine mixed subdivision of  $n\ssimplex_{d-1}$, as special subgraphs of $K_{\rows,\ground}$, into a covector of $\orientedmatroid$ (resp. $\widetilde{\orientedmatroid}$).

\begin{definition}\cite[Definition~3.26]{CelayaLohoYuen:2020} \label{def:signed+forest}
Given $S\in\{-1,0,1\}^{\rows}$ and $F\subseteq\rows\times\ground$, the sign matrix $SA_{F}\in\left\{ -1,0,1\right\} ^{\rows\times\ground}$ is defined as
\[
(SA_{F})_{i,j}=\begin{cases}
S_{i}A_{i,j}, & (i,j)\in F,\\
0, & \text{otherwise.}
\end{cases}
\]
\end{definition}

\begin{definition}\cite[Definition~3.27]{CelayaLohoYuen:2020} \label{def:triang+to+covectors+OM}
  Given a subgraph $F$ of $K_{\rows,\ground}$
  and a sign vector $S\in\{-1,0,1\}^{\rows}$, the sign vector $\psi_A(S,F)= Z \in\{-1,0,1\}^{\ground}$ is given by 
\[
Z_{j}=
\begin{cases}
0, & \text{column \ensuremath{j} of \ensuremath{SA_{F}} contains positive and negative entries, or all zeros}\\
1, & \text{column \ensuremath{j} of \ensuremath{SA_{F}} contains only non-negative entries}\\
-1, & \text{column \ensuremath{j} of \ensuremath{SA_{F}} contains only non-positive entries.}
\end{cases}
\]
\end{definition}

\begin{proposition}\cite[Proposition~3.32]{CelayaLohoYuen:2020} \label{prop:covectors+from+topes}
Let $F$ be a subgraph of a tree in $\widetilde{\mathcal{T}}$ without isolated nodes in $\ground \subset \widetilde{\ground}$, and such that a node in $\widetilde{\rows} \subset \widetilde{\ground}$ is isolated only if the corresponding node in $\rows$ is isolated as well. 
Let $S \in \{-1,0,1\}^{\rows}$ be a sign vector whose support contains the set of non-isolated nodes of $F$ in $\rows$.

Then the sign vector $\psi_{\widetilde{A}}(S,F)$ is a covector of $\widetilde{\orientedmatroid}$. 
\end{proposition}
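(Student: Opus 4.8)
The plan is to reduce the statement to the chirotope-level result in Theorem~\ref{thm:main} by exhibiting $\psi_{\widetilde{A}}(S,F)$ as a covector of $\widetilde{\orientedmatroid}$ through the correspondence between covectors and sign vectors of the row space, at least in the realizable case, and then to argue combinatorially in general. Concretely, I would first recall the abstract characterization of covectors of an oriented matroid $\widetilde{\orientedmatroid}$ given by its chirotope $\rchi$: a sign vector $Z \in \{-1,0,1\}^{\widetilde{\ground}}$ is a covector if and only if it is orthogonal (in the sense of sign vectors) to every circuit of $\widetilde{\orientedmatroid}$, equivalently if it lies in the ``sign span'' of the cocircuits. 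The cocircuits of $\widetilde{\orientedmatroid}$ are the sign vectors supported on the complements of hyperplanes, and their signs are read off from the chirotope $\rchi$ via the formula $Z^{(i)}_j = \rchi(\dots, i, \dots)$-type expressions. So the first step is to write down, for each row $i \in \rows$ with $S_i \neq 0$, the cocircuit of $\widetilde{\orientedmatroid}$ obtained from the matching-field chirotope restricted to the tree in $\widetilde{\mathcal{T}}$ containing $F$, and to check that its sign pattern on column $j$ is exactly $A_{i,j}$ (up to a global sign depending only on $i$) precisely when the edge $(i,j)$ lies in the relevant tree.

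The key combinatorial input here is that $F$ is a subgraph of a single tree $T$ in $\widetilde{\mathcal{T}}$, so all the bases (spanning trees) appearing share a common structure, and the chirotope values on the bases containing a fixed edge $(i,j)$ all factor through $\sign(M_\sigma)\prod A_e$ in a compatible way. In particular, for a fixed row $i$, the cocircuit $Z^{(i)}$ of $\widetilde{\orientedmatroid}$ ``dual'' to the hyperplane spanned by the other $d-1$ rows has $Z^{(i)}_j = \varepsilon_i \cdot A_{i,j}$ on all columns $j$ reachable appropriately in $T$, where $\varepsilon_i \in \{+,-\}$ is a sign determined by the permutation sign of the matching; the hypotheses on isolated nodes in $\ground$ and $\widetilde{\rows}$ guarantee that the support conditions line up so that $Z^{(i)}$ is supported exactly where column $j$ of $A_{i,\bullet}$ restricted to $F$-edges is. Then I would observe that $S_i A_{i,j}$ on the edges of $F$ is, row by row, $(S_i \varepsilon_i^{-1}) Z^{(i)}_j$ restricted to $\spt{F_i}$, and hence
\[
\psi_{\widetilde{A}}(S,F) \;=\; \bigcirc_{i \in \spt S} \bigl( (S_i \varepsilon_i) \cdot Z^{(i)} \bigr),
\]
a composition of cocircuits (with signs), where the composition order is any linear order on $\rows$ and the column rule in Definition~\ref{def:triang+to+covectors+OM} is exactly the composition rule ``first nonzero wins'' once one checks there are never two opposite nonzero signs in a column from different $Z^{(i)}$'s --- which is where the single-tree hypothesis is essential, since two edges $(i,j)$, $(i',j)$ in a tree cannot create a conflicting cycle. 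Since covectors are closed under composition (Definition~\ref{def:covector}(3)) and contain the cocircuits (hence their negatives), this proves $\psi_{\widetilde{A}}(S,F)$ is a covector of $\widetilde{\orientedmatroid}$.

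The main obstacle I anticipate is the bookkeeping of signs $\varepsilon_i$ and, more seriously, verifying the ``no conflicting column'' claim: a priori two rows $i \neq i'$ both adjacent to $j$ in $F$ could contribute $S_i A_{i,j}$ and $S_{i'} A_{i',j}$ of opposite sign, and one must show that when this happens the $0$ produced by Definition~\ref{def:triang+to+covectors+OM} is consistent with being a covector coordinate --- equivalently, that the corresponding composition of cocircuits genuinely has a $0$ there, which requires relating the hyperplane structure of $\widetilde{\orientedmatroid}$ to cycles in the tree $T$. I would handle this by a careful induction on $|\spt S|$, adding one row at a time, at each stage using the fact that $F$ restricted to the already-treated rows plus the new row is still a forest, so the new cocircuit $Z^{(i)}$ can only be ``in conflict'' on a column $j$ where an $F$-path already forces the value; a local analysis of the GP relation in Definition~\ref{def:chirotope} on such a path then pins down the sign. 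This reduces cleanly to the chirotope axiom and to Theorem~\ref{thm:main}, and the non-realizable case is subsumed because the whole argument is phrased in terms of the chirotope and the covector axioms rather than an explicit matrix.
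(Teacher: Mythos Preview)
There is a genuine gap. Your central identity
\[
\psi_{\widetilde{A}}(S,F) \;=\; \bigcirc_{i \in \supp S}\, (S_i \varepsilon_i)\, Z^{(i)}
\]
cannot hold. The cocircuit $Z^{(i)}$ of $\widetilde{\orientedmatroid}$ complementary to the hyperplane $\widetilde{\rows}\setminus\{\widetilde{r}_i\}$ satisfies $Z^{(i)}_j=\varepsilon_i A_{i,j}\neq 0$ for \emph{every} $j\in\ground$: the chirotope in Theorem~\ref{thm:main} takes no zero values, so the underlying matroid of $\widetilde{\orientedmatroid}$ is uniform and every cocircuit has support of size $n+1$. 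In particular $Z^{(i)}$ does not depend on $T$ or on $F$; your qualifier ``on all columns $j$ reachable appropriately in $T$'' is vacuous. Consequently any composition $\bigcirc_i(\pm Z^{(i)})$ has full support on $\ground$, returning at each $j$ simply the sign from the first row in your chosen order. But $\psi_{\widetilde{A}}(S,F)_j=0$ exactly when column~$j$ of $SA_F$ contains both signs, and this genuinely happens---it is precisely the condition defining the pseudospheres $\Delta_j$ in~(\ref{eq:Delta_j}). So your composition disagrees with $\psi_{\widetilde{A}}(S,F)$ on the coordinates that matter most.

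Your proposed repair via induction on $|\supp S|$ and a ``local GP analysis on a path'' cannot close this gap: composition of sign vectors \emph{never} produces a zero at a coordinate where any constituent is nonzero, regardless of order, so no amount of chirotope bookkeeping will make $\bigcirc_i(\pm Z^{(i)})$ vanish at such a $j$. To get zeros inside $\ground$ you must either compose covectors that already vanish there, or pass to a face of a larger covector rather than build up from cocircuits. The present paper only quotes this proposition from \cite{CelayaLohoYuen:2020} and does not reprove it; the argument there proceeds by first identifying the \emph{topes} of $\widetilde{\orientedmatroid}$ (the case where $F$ is a full tree of $\widetilde{\mathcal{T}}$ and $S$ has full support) and then obtaining the general $\psi_{\widetilde{A}}(S,F)$ as a face of such a tope, which is how the zeros on $\ground$ legitimately appear.
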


%Restricting the covectors to $\ground$ yields the following. 

%\todo{GL: We might have to check with covector graph vs. pd-graph here. }

\begin{corollary}\cite[Corollary~3.33]{CelayaLohoYuen:2020} \label{coro:more+general+covectors}
  For a subgraph $F$ of a tree in $\mathcal{T}$ with no isolated node in $\ground$, the sign vector $\psi_A(S,F)$ is a covector of $\orientedmatroid$ for every sign vector $S$.
\end{corollary}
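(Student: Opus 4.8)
The plan is to deduce this from Proposition~\ref{prop:covectors+from+topes} by lifting the pair $(S,F)$ to the augmented ground set $\augground$, applying that proposition there, and transporting the resulting covector back along the relation between $\widetilde{\orientedmatroid}$ and $\orientedmatroid$.

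The first ingredient I would establish is that $\orientedmatroid$ is the deletion $\widetilde{\orientedmatroid}\setminus\widetilde{\rows}$. For a $d$-subset $\sigma\subseteq\ground\subseteq\augground$, every $\rows$-saturating matching between $\rows$ and $\sigma$ lying in a tree of $\widetilde{\mathcal{T}}$ avoids the added copy edges, since their second endpoints lie in $\widetilde{\rows}$, which is disjoint from $\sigma$; hence the matching $\widetilde{M}_{\sigma}$ of the pointed polyhedral matching field equals $M_{\sigma}$, and because $\widetilde{A}$ restricts to $A$ on $\rows\times\ground$ the chirotope of $\widetilde{\orientedmatroid}$ restricted to $\binom{\ground}{d}$ is exactly $\rchi$. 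Consequently $\orientedmatroid=\widetilde{\orientedmatroid}\setminus\widetilde{\rows}$, so (by the standard description of covectors of a deletion) the covectors of $\orientedmatroid$ are precisely the restrictions to $\ground$ of the covectors of $\widetilde{\orientedmatroid}$.

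Next I would lift $F$. Pick a tree $T\in\mathcal{T}$ with $F\subseteq T$, let $\widetilde{T}\in\widetilde{\mathcal{T}}$ be the corresponding tree on $\rows\sqcup\augground$, and set $\widetilde{F}=F\cup\{\,\{i,\tilde{i}\}:i\in\rows\text{ non-isolated in }F\,\}\subseteq\widetilde{T}$. Then $\widetilde{F}$ has no isolated node in $\ground\subset\augground$, and a node $\tilde{i}\in\widetilde{\rows}$ is isolated in $\widetilde{F}$ exactly when $i$ is isolated in $F$, which is exactly when $i$ is isolated in $\widetilde{F}$; thus the hypotheses on the graph in Proposition~\ref{prop:covectors+from+topes} hold, and the non-isolated $\rows$-nodes of $\widetilde{F}$ are those of $F$. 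Granting that $\supp(S)$ contains those nodes, Proposition~\ref{prop:covectors+from+topes} yields that $\psi_{\widetilde{A}}(S,\widetilde{F})$ is a covector of $\widetilde{\orientedmatroid}$; and for each $j\in\ground$ the $j$-th column of $S\widetilde{A}_{\widetilde{F}}$ has the same entries as the $j$-th column of $SA_{F}$, because the copy edges only fill columns indexed by $\widetilde{\rows}$ and $\widetilde{A}=A$ on $\rows\times\ground$. Therefore $\psi_{\widetilde{A}}(S,\widetilde{F})|_{\ground}=\psi_{A}(S,F)$, and restricting this covector along the deletion shows that $\psi_{A}(S,F)$ is a covector of $\orientedmatroid$.

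The step I expect to be the real obstacle is making this work for an \emph{arbitrary} sign vector $S$, that is, removing the hypothesis that $\supp(S)$ contains the non-isolated $\rows$-nodes of $F$. The obvious move---deleting from $F$ every edge incident to a row outside $\supp(S)$---leaves $\psi_{A}(S,\cdot)$ unchanged but may turn some nodes of $\ground$ into isolated nodes, which violates the hypothesis of Proposition~\ref{prop:covectors+from+topes}, so the reduction to the previous case is not automatic. One therefore needs either a more careful lift that keeps the affected columns nonempty through $\widetilde{\rows}$ without creating non-isolated $\rows$-nodes outside $\supp(S)$, or a direct argument (via composition and single-element elimination of covectors of $\orientedmatroid$) that the sign vector obtained on those columns is still a covector. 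This is also the point most sensitive to the precise formulation, so I would first pin down the exact statement being invoked before committing to one of these routes.
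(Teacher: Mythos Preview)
Your reduction to Proposition~\ref{prop:covectors+from+topes} via the identification $\orientedmatroid=\widetilde{\orientedmatroid}\smallsetminus\widetilde{\rows}$ is exactly how the present paper argues: see the paragraph in Section~\ref{sec:pf_main_theorem} just before the displayed corollary, where for $(S,F)\in\poset(\fmixed)$ the graph $T$ is built from $F$ by adjoining the copy edges over $\supp(S)$ and Proposition~\ref{prop:covectors+from+topes} is applied directly. For pairs with $\supp(S)\supseteq\supp_{\rows}(F)$ --- the only case the paper ever uses, since this containment is part of Definition~\ref{def:elim_poset} --- your argument is complete and matches the paper's.

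Your worry about arbitrary $S$ is not a technicality to be patched but an actual obstruction: the statement as printed here fails without the support hypothesis. Take $d=2$, $n=3$, the staircase triangulation of $\ssimplex_{1}\times\ssimplex_{2}$ containing the spanning tree $F=\{(1,1),(2,1),(2,2),(2,3)\}$, any $A\in\{+,-\}^{2\times 3}$, and $S=(+,0)$. Then $SA_{F}$ has a single nonzero entry (namely $A_{1,1}$ in column~$1$), so $\psi_{A}(S,F)=(A_{1,1},0,0)$. But $\orientedmatroid$ is a uniform rank-$2$ oriented matroid on three elements, and every nonzero covector of such a matroid has at most one zero coordinate; hence $(A_{1,1},0,0)$ is not a covector. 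So neither of your suggested workarounds can succeed in full generality, and the corollary should be read with the implicit hypothesis $\supp(S)\supseteq\supp_{\rows}(F)$ (equivalently, it certainly holds for $S\in\{-,+\}^{\rows}$). You should check the exact formulation of Corollary~3.33 in \cite{CelayaLohoYuen:2020} before investing further in the general case.
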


Note that the latter subgraphs are called \emph{covector pd-graphs} in~\cite{CelayaLohoYuen:2020}. 

%We shall see in Section~\ref{sec:patchwork} that the map is actually surjective.

\subsection{Poset and Lattice Quotients}
\label{sec:poset_lattice_quotients}
The following definition is due to Hallam and Sagan \cite{HallamSagan:2015},
which proved useful in their work on factorizing characteristic polynomials
of lattices. This definition turns out to be the right one for us
as well.

\begin{definition}

Let $\poset$ be a finite poset. An equivalence relation $\sim$ on
the ground set of $\poset$ is $\poset$-\emph{homogeneous} provided
the following condition holds: if $\tau\leq\sigma$ in $\poset$,
then for every $u\in\eqclass{\tau}$ there exists $v\in\eqclass{\sigma}$
such that $u\leq v$ in $\poset$. We denote by either $\eqclass{\sigma}$
or $\sigma\eqrel$ the equivalence class of $\sigma$ in $\sim$. 

\end{definition}

\begin{proposition}[{\cite[Lemma~5]{HallamSagan:2015}}] Suppose $\sim$
is $\poset$-homogeneous. Then we have a well-defined poset $\poset\eqrel$
on the classes of $\sim$ defined as follows: $\eqclass{\tau}\leq\eqclass{\sigma}$
in $\poset\eqrel$ if and only if there exists $u\in\eqclass{\tau}$
and $v\in\eqclass{\sigma}$ such that $u\leq v$ in $\poset$. Equivalently,
for every $u\in\eqclass{\tau}$ there exists $v\in\eqclass{\sigma}$
such that $u\leq v$ in $\poset$.

\end{proposition}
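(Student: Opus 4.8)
The statement to prove is the Hallam--Sagan result: if $\sim$ is $\poset$-homogeneous, then the relation on equivalence classes given by ``$\eqclass{\tau}\leq\eqclass{\sigma}$ iff $\exists\,u\in\eqclass{\tau},v\in\eqclass{\sigma}$ with $u\leq v$'' is a well-defined partial order, and moreover it coincides with the a priori stronger condition ``for every $u\in\eqclass{\tau}$ there exists $v\in\eqclass{\sigma}$ with $u\leq v$.'' The plan is to first establish the equivalence of the two descriptions, since that equivalence is exactly what makes transitivity and antisymmetry tractable, and then verify the three poset axioms (reflexivity, transitivity, antisymmetry) directly.

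\emph{Step 1: the two descriptions agree.} One direction is trivial: if every $u\in\eqclass{\tau}$ admits some $v\in\eqclass{\sigma}$ with $u\leq v$, then certainly \emph{some} such pair exists. For the converse, suppose $u_0\leq v_0$ for some $u_0\in\eqclass{\tau}$, $v_0\in\eqclass{\sigma}$, and let $u\in\eqclass{\tau}$ be arbitrary. Since $u\sim u_0$ and $u_0\leq v_0$, $\poset$-homogeneity applied with $\tau\mathrel{\widehat{}}=u_0$ in place of ``$\tau$'' — more precisely, using that $u_0$ and $u$ lie in the same class and $u_0\leq v_0$ — yields some $v\in\eqclass{v_0}=\eqclass{\sigma}$ with $u\leq v$. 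This is the one spot where the homogeneity hypothesis is used, and it is the crux of the whole argument; everything else is formal.

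\emph{Step 2: poset axioms.} Reflexivity is immediate since $\sigma\leq\sigma$ in $\poset$ gives $\eqclass{\sigma}\leq\eqclass{\sigma}$. For transitivity, assume $\eqclass{\tau}\leq\eqclass{\sigma}$ and $\eqclass{\sigma}\leq\eqclass{\rho}$; pick any $u\in\eqclass{\tau}$, use the ``for every'' description (Step 1) to get $v\in\eqclass{\sigma}$ with $u\leq v$, then apply it again to $v\in\eqclass{\sigma}$ to get $w\in\eqclass{\rho}$ with $v\leq w$, whence $u\leq w$ in $\poset$ and so $\eqclass{\tau}\leq\eqclass{\rho}$. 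For antisymmetry, suppose $\eqclass{\tau}\leq\eqclass{\sigma}$ and $\eqclass{\sigma}\leq\eqclass{\tau}$ but $\eqclass{\tau}\neq\eqclass{\sigma}$; starting from any $x_0\in\eqclass{\tau}$ and alternately applying the ``for every'' description produces an infinite chain $x_0\leq y_1\leq x_1\leq y_2\leq\cdots$ with the $x_i\in\eqclass{\tau}$ and $y_i\in\eqclass{\sigma}$. Since $\eqclass{\tau}\cap\eqclass{\sigma}=\varnothing$, consecutive terms are distinct, so this is a strictly ascending chain in the finite poset $\poset$ — a contradiction. Hence $\eqclass{\tau}=\eqclass{\sigma}$.

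The only real obstacle is making Step 1 airtight: one must be careful that $\poset$-homogeneity, as stated, is a statement about a comparability $\tau\leq\sigma$ together with a representative $u$ of $\eqclass{\tau}$, and to invoke it for an arbitrary $u\in\eqclass{\tau}$ one applies it to the comparability ``$u_0\leq v_0$'' viewing $u_0$ as a representative of its own class $\eqclass{\tau}$. Once that is phrased correctly the rest is bookkeeping, and finiteness of $\poset$ is what rescues antisymmetry (without it the relation need only be a preorder).
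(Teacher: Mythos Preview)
Your proof is correct. Note, however, that the paper does not actually supply its own proof of this proposition: it is stated with a citation to Hallam--Sagan \cite[Lemma~5]{HallamSagan:2015} and no argument is given in the paper itself. Your write-up is essentially the standard verification (and matches the Hallam--Sagan argument): the homogeneity hypothesis is used exactly once, to upgrade the existential description of $\eqclass{\tau}\leq\eqclass{\sigma}$ to the universal one, after which reflexivity and transitivity are formal and antisymmetry follows from finiteness of $\poset$ via the ascending-chain argument you give.
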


We call the poset $\poset\eqrel$ the \emph{homogeneous quotient of}
$\poset$ \emph{by} $\sim$. We are particularly interested in homogeneous
quotients which have nice factorizations in the following sense:

\begin{definition}A homogeneous quotient $\poset\eqrel$ is an \emph{elementary
quotient} if every equivalence class of $\sim$ is either a singleton, or consists of exactly three elements $\sigma,\tau,\gamma\in\poset$
such that $\sigma$ and $\tau$ both cover $\gamma$ in $\poset$.

\end{definition}

\begin{definition}We say that $\poset\eqrel$ \emph{admits a factorization
into elementary quotients} if there exist posets $\poset=\poset_{0},\poset_{1},\ldots,\poset_{k}=\poset\eqrel$
such that $\poset_{i}=\poset_{i-1}\eqrel_{i}$ is an elementary quotient
of $\poset_{i-1}$ for all $i=1,2,\ldots,k$.

\end{definition}

Since the following notion appears several times in this paper, we
give the definition here:

\begin{definition}The \emph{augmented poset} of a poset $\poset$
is the poset $\lattice(\poset):=\poset\cup\{\hat{\zero},\hat{\oneone}\}$,
where $\hat{\zero}$ and $\hat{\oneone}$ are two additional elements
such that $\hat{\zero}<\sigma<\hat{\oneone}$ for all $\sigma\in\poset$.

\end{definition}

\section{Patchworking Pseudosphere Arrangements}\label{sec:patchwork}

\subsection{Patchworking Pseudolines on an Example}

The classical theory of patchworking states that the structure of the real zero set of a polynomial in one orthant, parameterized by $t > 0$, is captured for sufficiently small $t$ by the regular triangulation of its Newton polytope induced by the exponents of $t$.
Hence, one can recover the structure of the real zero set by gluing the triangulations for all orthants. 
This uses an appropriate assignment of signs to the vertices of the Newton polytope.
By considering coherent fine mixed subdivisions, see Section~\ref{sec:triangulation}, this was extended to complete intersections in~\cite{Sturmfels:1994a}.

We use patchworking of not-necessarily coherent fine mixed subdivisions of $n\ssimplex_{d-1}$ to derive a representation theorem for the oriented matroids induced from polyhedral matching fields.
This can be seen as a generalization of the linear case of~\cite[Thm.~4]{Sturmfels:1994a} for generic hyperplane arrangements.
While a complete intersection for generic hyperplanes would only yield one specific cell, the oriented matroid captures the information of all intersections in a generic hyperplane arrangement. 

Example~\ref{ex:small_patchworking} is a toy example that illustrates our construction, which is generalized to larger $\ground$ and higher rank in this section. 

\begin{figure}[htb]
\includegraphics[width=.7\textwidth]{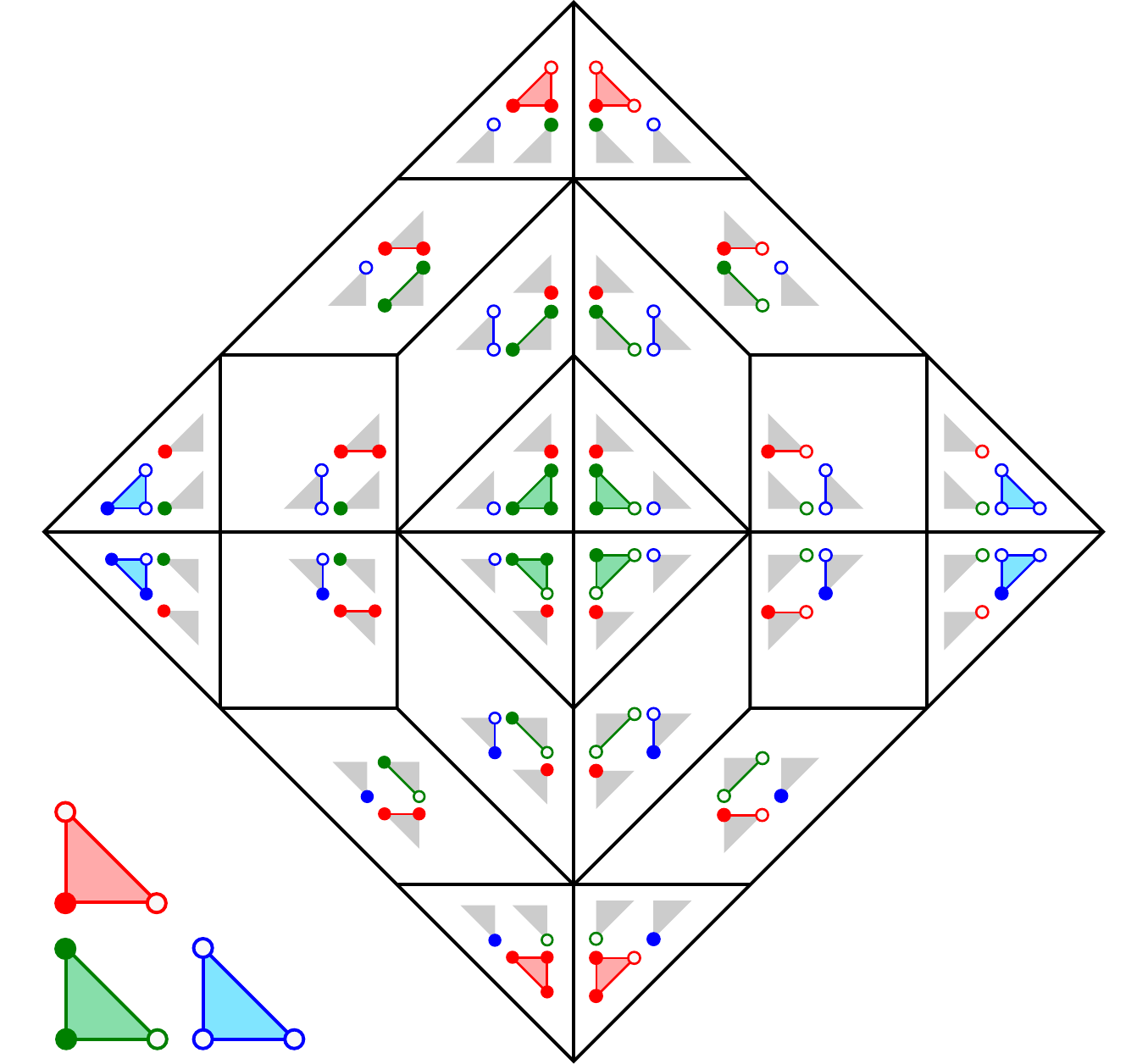}

  \caption{Fine mixed subdivision of $3\ssimplex_2$ with cells labeled by their summands and sign; filled vertices denote `$+$', empty ones~`$-$'.}
  \label{fig:mixed+subdivision+labeled}
\end{figure}

\begin{figure}[htb]
    
\includegraphics[width=.5\textwidth]{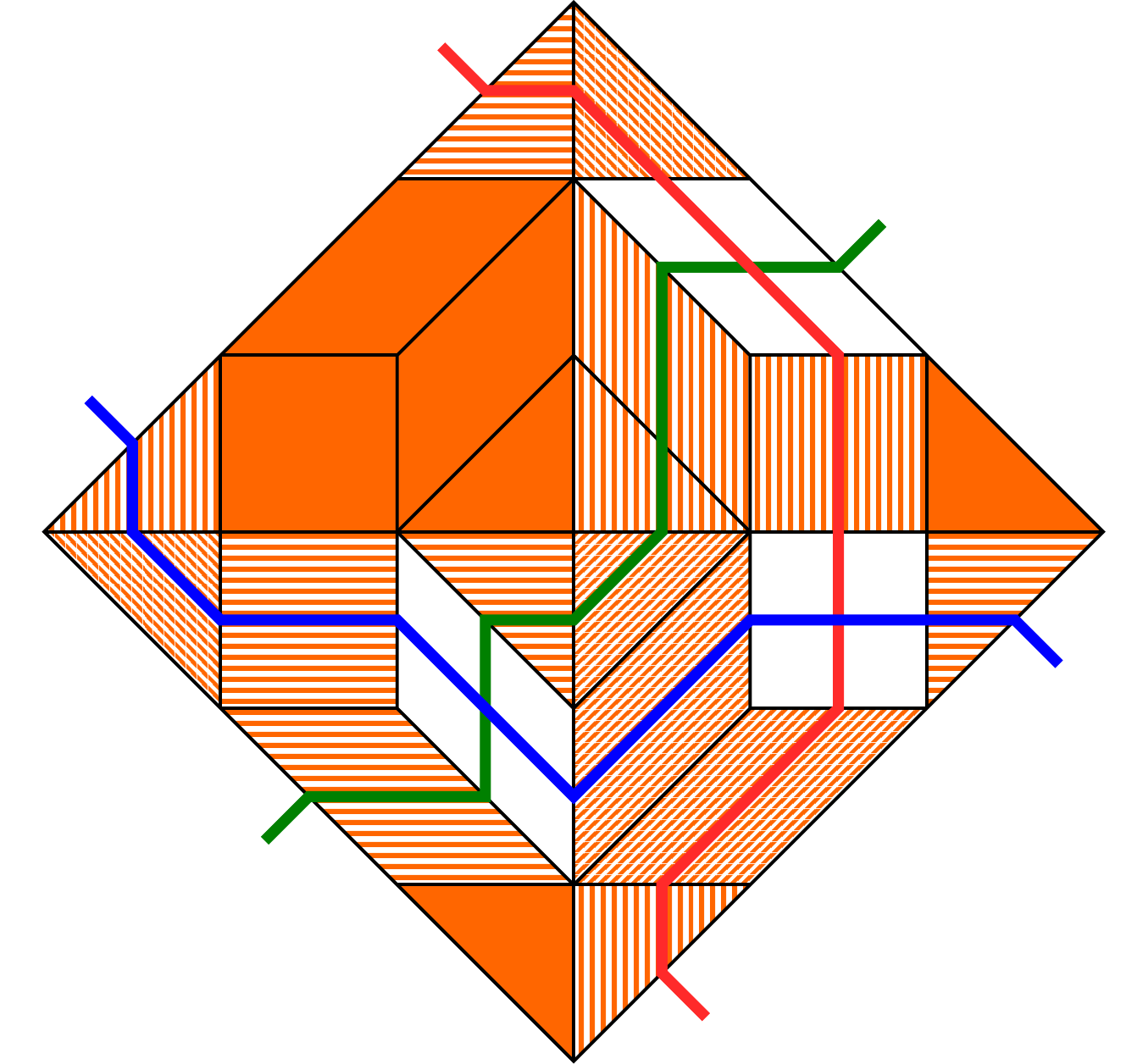}
  \caption{Pseudohyperplane arrangement derived from a fine mixed subdivisions of $3\ssimplex_2$ with signs indicated in Figure~\ref{fig:mixed+subdivision+labeled}. }
  \label{fig:mixed+subdivision+PL+curves}
\end{figure}
\begin{example} \label{ex:small_patchworking}
We start with the regular triangulation of $\ssimplex_2\times\ssimplex_2$ induced by the height matrix
\[
H = 
  \begin{pmatrix}
    0 & 3 & 2 \\
    0 & 0 & 0 \\
    1 & 3 & 0 
  \end{pmatrix} \; .
  \]
  This gives rise to the following height function on the lattice points of $3 \ssimplex_2$:
  \[
  (300; 5), (201; 6), (210; 5), (102; 6), (111; 5), (120; 3), (003; 4), (012; 4), (021; 3), (030; 0)
  \]
  Here, the height of the lattice point $(p_1,p_2,p_3)$ is the weight of the maximal matching on $K_{3,3}$ for which the weight function is obtained from $H$ by taking $p_{\ell}$ copies of the $\ell$-th row of $H$. 
  Note that, alternatively, the latter height function of the mixed subdivision can be obtained by multiplying the $\max$-tropical linear polynomials $(x_0 \oplus 3 \odot x_1 \oplus 3 \odot x_2) \odot (x_0 \oplus x_1 \oplus 2 \odot x_2) \odot (x_0 \oplus 1 \odot x_1 \oplus x_2)$. 
  We refer the reader further interested in this connection to~\cite{Joswig:2020}. 
  
Additionally, we equip the subdivision by the sign matrix
    \[
  \begin{pmatrix}
    - & + & - \\
    + & + & - \\
    - & - & - 
  \end{pmatrix} \enspace .
  \]

The fine mixed subdivision of $3\ssimplex_2$ induced by $H$ is shown in the upper-right quartile of Figure~\ref{fig:mixed+subdivision+labeled}.
The cells are labeled by their Minkowski summands (cf. the Cayley trick in Section~\ref{sec:triangulation}) as follows.
The elements of $\ground$ (as the three columns from left to right) are represented by the red, green, and blue simplices, respectively; the elements of $R$ (as the three rows from top to bottom) are represented by the top, lower-left, and right vertices of each simplex, respectively.
Furthermore, the vertices of these simplices are labeled with signs coming from the sign matrix.

The faces of a mixed cell correspond to the subgraphs of its spanning tree without isolated nodes in $\ground$. 
In particular, a vertex of a mixed cell can be specified by choosing a vertex from each colored simplex, thus it encodes a sign vector $\{+,-\}^\ground$.
Such a forest associated with a vertex is independent of the mixed cell containing it.
Hence, we have a well-defined assignment of sign vectors to the vertices of the mixed subdivision.
For example, the lower left vertex $v$ of the square in the upper-right quartile of Figure~\ref{fig:mixed+subdivision+labeled} is the Minkowski sum of a filled red vertex, an empty green and an empty blue vertex.
Therefore, it encodes the sign vector $(+,-,-)$. 

Next we reflect the dilated simplex across the coordinate hyperplanes in $\mathbb{R}^3$ so that there is a copy in every octant\footnote{We only show the upper half of that {\em patchworking complex} in Figure~\ref{fig:mixed+subdivision+labeled} as the construction is centrally symmetric.}.
We keep the same subdivision in all copies and label the vertices of these copies with sign vectors similar to the above, but instead of the original sign matrix, we negate a row of it if the corresponding coordinate in the octant is negative.
For the vertex $v$, e. g., this yields $(+,+,-)$ for its reflection in the upper-left quartile of Figure~\ref{fig:mixed+subdivision+labeled}. 
Again, the sign vector assigned to a vertex that appears in multiple copies of the dilated simplex is independent of the copy chosen: whenever a vertex lies on a hyperplane $\{x_i=0\}$, the $i$-th node of $R$ must be an isolated one in the forest corresponding to the vertex, thus the negation of the $i$-th row does not affect the sign vector.

As our example is of rank 3, we obtain a subdivision of the boundary of a dilated octahedron (which is PL homeomorphic to $S^2$), with vertices of the subdivision labeled by sign vectors.

Finally, we define a ``zero locus'' for each element $e\in\ground$ as a subset of the patchworking complex.
This zero locus is dual to the cells which have a Minkowski summand with vertices of different sign. 
Given a cell of the subdivision, select the edges (one-dimensional faces) of the cell in which the sign vectors of their endpoints disagree on the $e$-th coordinate, and take the convex hull of the midpoints of them.
Take the union of all such convex hulls, it can be seen from Figure~\ref{fig:mixed+subdivision+PL+curves} that each of such ``zero loci'' is a pseudosphere on the patchworking complex.

Note that the boundary of $3\ssimplex_2$ in $\RR^3$ can be seen as the intersection with the three hyperplanes bounding the non-negative orthant.
Extending these through the reflections of $3\ssimplex_2$ yields three further pseudospheres.
This gives rise to an interpretation of Figure~\ref{fig:mixed+subdivision+PL+curves} as an arrangement of six pseudospheres.
By `fattening' the latter three coordinate pseudospheres we arrive at the \emph{extended patchworking complex} introduced in the next section. 

\end{example}

\begin{remark} \label{rem:Ringel}
Since oriented matroids coming from regular triangulations are all realizable, the pseudosphere arrangements constructed from a coherent fine mixed subdivision are all stretchable, which gives some non-trivial structural constraints on coherence.

We recall the example from~\cite{CelayaLohoYuen:2020} that treats Ringel's non-realizable uniform oriented matroid $\mathbf{R}$ of rank $3$ on $9$ elements.
It can be realized by patchworking a suitable non-coherent fine mixed subdivision of $6 \ssimplex_2$ and choosing appropriate signs as depicted in Figure~\ref{fig:realizing+ringel}.
  It is not clear to the authors, if the oriented matroid $\mathbf{R}$ can be constructed from another non-coherent fine mixed subdivision of $6 \ssimplex_2$.

It is an interesting experimental question of which of the 24 (non-isomorphic) non-realizable oriented matroids of rank $4$ on $8$ elements arise from the non-regular triangulation constructed by de Loera in~\cite{DeLoera:1996} or its modifications by choosing appropriate signs.
\end{remark}

\begin{figure}
    \centering
    \includegraphics[scale=0.35]{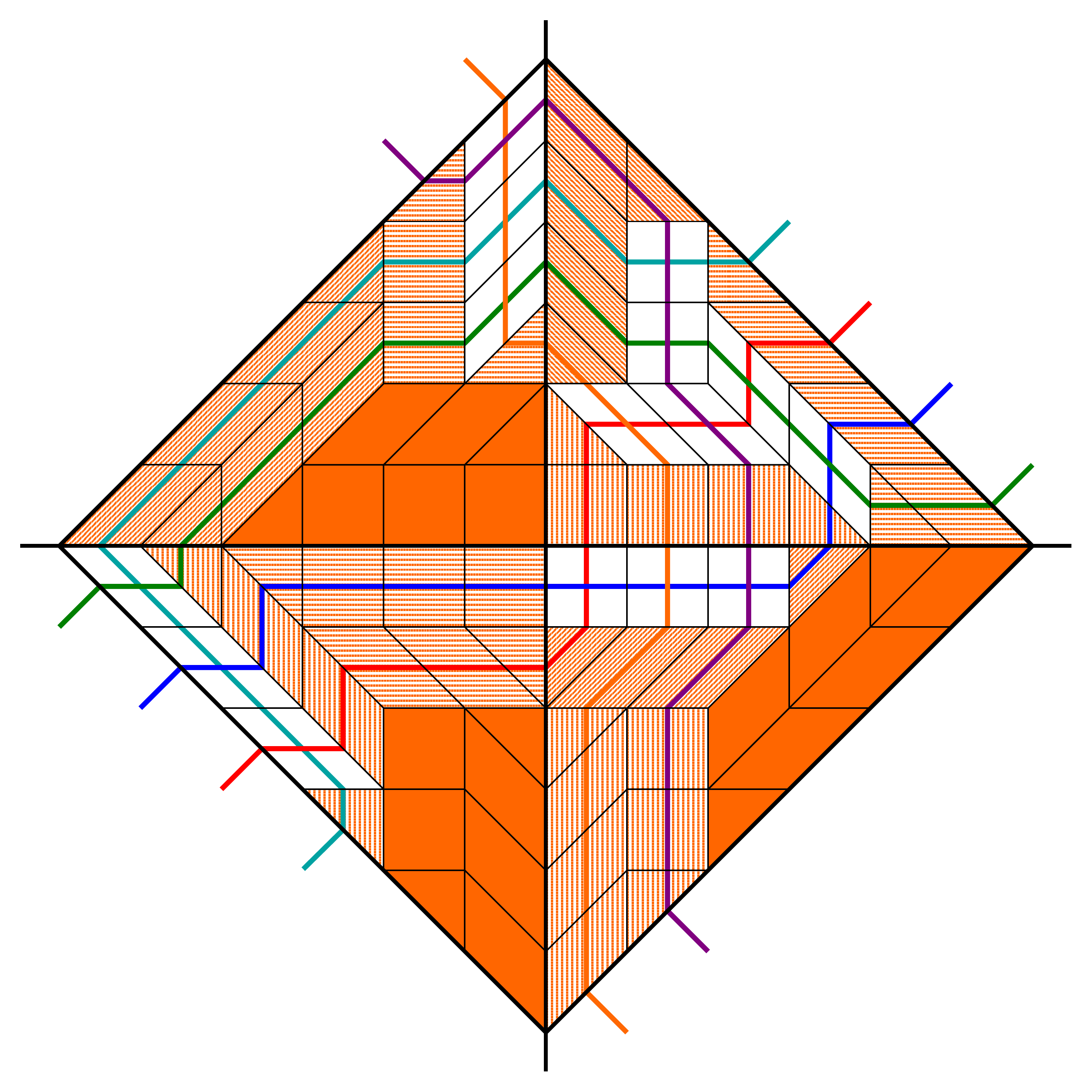}
    \caption{Non-coherent fine mixed subdivision of $6\Delta_2$ patchworking the Ringel arrangement discussed in Remark~\ref{rem:Ringel}. }
  \label{fig:realizing+ringel}
\end{figure}

\begin{remark}
It is also an interesting problem to interpret our construction here as a limit with respect to some one-dimensional family of (meaningful) geometric objects; the regular (and non-singular) case is closely related to the theory of amoebas \cite{Mikhalkin:2004}.
The rank 3 case is worth to put emphasis on, not only because it is already combinatorially rich enough, but the work of Ruberman--Starkton indicates that every pseudoline arrangement can be complexified into an arrangement of symplectic spheres~\cite{RubermanStarkton:2019}, hence suggesting a symplectic flavored answer here (see also the aforementioned work of Itenberg--Shustin \cite{ItenbergShustin:2002}).
\end{remark}

\subsection{From Fine Mixed Subdivisions to Pseudosphere Arrangements}

\label{sec:mixed_subdiv_to_PSA}

We now state precisely our method for constructing a pseudosphere
arrangement representing an oriented matroid associated to a polyhedral
matching field. For this, we fix a fine mixed subdivision~$\fmixed$
of~$n\ssimplex_{d-1}$. By the Cayley trick, this corresponds to
a triangulation $\mathcal{T}$ of $\ssimplex_{d-1}\times\ssimplex_{n-1}$.
By means of Definition~\ref{def:pointed+polyhedral}, it gives
rise to a pointed polyhedral matching field $(\widetilde{M_{\sigma}})$
on $\rows\sqcup\augground$ with $\augground=\widetilde{\rows}\cup\ground$. For
an arbitrary matrix $A\in\{+,-\}^{\rows\times\ground}$, we consider
the augmented matrix $\widetilde{A}=(I_{\rows}\mid A)$ as sign matrix
for $(\widetilde{M_{\sigma}})$. Let $\widetilde{\orientedmatroid}$
denote the oriented matroid on $\widetilde{\ground}=\widetilde{\rows}\cup\ground$
associated to the pointed polyhedral matching field $(\widetilde{M_{\sigma}})$
with the sign matrix $\widetilde{A}$, and let $\orientedmatroid$
be its restriction to $\ground$.

Recall from Section~\ref{sec:triangulation} that we may identify
the maximal simplices in $\mathcal{T}$ with spanning trees of $K_{\rows,\ground}$.
The cells $\sigma_{F}$ of $\fmixed$ are in 1-1 correspondence with
the forests $F$ contained in a spanning tree of $\mathcal{T}$ for
which $\deg_{F}(j)\geq1$ for all $j\in\ground$.

\smallskip{}
We denote the cube $[-1,1]^{d}\subset\RR^{d}$ and its polar dual,
the crosspolytope, by $\cube_{d}$ and $\crosspolytope_{d}$, respectively.
For a sign vector $S\in\left\{ -1,0,1\right\} ^{d}$ and a set $K$
contained in the coordinate subspace $\RR^{\supp(S)}\times\{0\}^{\overline{\supp(S)}}$ of $\RR^{d}$,
define
\begin{align*}
S\cdot K & :=\left\{ (S_{1}x_{1},\ldots,S_{d}x_{d})\in\RR^{d}:(x_{1},\ldots,x_{d})\in K\right\} \\
\cube_{S} & :=\left\{ x\in\cube_{d}:x_{i}=S_{i}\text{ for all \ensuremath{i\in\supp(S)}}\right\} .
\end{align*}
Hence, $S\cdot K$ denotes the reflections of $K$ to the orthant
indicated by $S$, and $\cube_{S}$ comprises the sign patterns of
orthants containing the sign vector $S$. For a subgraph $F$ of $K_{\rows,\ground}$,
let $\supp_{\rows}(F):=\left\{ i\in\rows:\deg_{F}(i)\geq1\right\} $.
This set encodes the unique minimal face of $n\ssimplex_{d-1}$ containing
$F$. 

\begin{proposition}\label{prop:S_octagon}

The subdivision $\fmixed$ of $n\ssimplex_{d-1}$ gives rise to the
subdivision
\[
\octfmixed:=\left\{ \sigma_{(S,F)}:\sigma_{F}\in\fmixed,\;S\in\{-1,0,1\}^{\ground},\;\supp(S)\supseteq\supp_{\rows}(F)\right\} 
\]
of the boundary of $\octagon_{d}:=\cube_{d}+n\crosspolytope_{d}$,
where $\sigma_{(S,F)}:=\cube_{S}+S\cdot\sigma_{F}$.

\end{proposition}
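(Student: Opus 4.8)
The plan is to verify the three defining properties of a polyhedral subdivision of $\partial\octagon_{d}$ for $\octfmixed$ — that it covers $\partial\octagon_{d}$, is closed under passing to faces, and that any two cells meet in a common face — using the face structure of the Minkowski sum $\octagon_{d}=\cube_{d}+n\crosspolytope_{d}$ as a scaffold. So the first step is to describe the faces of $\octagon_{d}$. For a nonzero linear functional $\phi$ one has $\operatorname{face}_{\phi}(\octagon_{d})=\operatorname{face}_{\phi}(\cube_{d})+\operatorname{face}_{\phi}(n\crosspolytope_{d})=\cube_{S}+\crosspolytope^{\epsilon}$, where $\cube_{S}$ is a face of $\cube_{d}$, $\crosspolytope^{\epsilon}:=n\conv\{\epsilon_{i}\ee_{i}:i\in\supp(\epsilon)\}$ (with $\supp(\epsilon)\neq\emptyset$) is the corresponding reflected dilated-simplex face of $n\crosspolytope_{d}$, and the pair satisfies $\supp(\epsilon)\subseteq\supp(S)$ with $\epsilon$ and $S$ agreeing on $\supp(\epsilon)$; conversely every such compatible pair arises. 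Since $\cube_{S}$ spans the coordinate directions outside $\supp(S)$ while $\crosspolytope^{\epsilon}$ lies in the coordinates of $\supp(\epsilon)\subseteq\supp(S)$, the two summands occupy complementary coordinate subspaces, so the Minkowski sum is direct, i.e.\ $\cube_{S}+\crosspolytope^{\epsilon}\cong\cube_{S}\times\crosspolytope^{\epsilon}$, and in particular points of the face decompose uniquely. Checking dimensions, the facets of $\octagon_{d}$ are exactly the polytopes $G_{T}:=\cube_{T}+\crosspolytope^{T}\cong\cube_{T}\times\crosspolytope^{T}$ for $T\in\{-1,0,1\}^{d}\setminus\{\zero\}$, with \emph{both} summands indexed by the same $T$.

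Next I would place each cell of $\octfmixed$ into this scaffold. The hypothesis $\supp(S)\supseteq\supp_{\rows}(F)$ ensures that $S\cdot\sigma_{F}$ lies in the coordinate subspace of $\supp_{\rows}(F)$ and depends only on $S$ restricted to $\supp_{\rows}(F)$; combining this with the compatibility criterion, a short computation gives that $\sigma_{(S,F)}=\cube_{S}+S\cdot\sigma_{F}$ is contained in the facet $G_{T}$ exactly when $\supp_{\rows}(F)\subseteq\supp(T)\subseteq\supp(S)$ and $S,T$ agree on $\supp(T)$. Taking $T=S$ shows $\sigma_{(S,F)}\subseteq G_{S}$ always, so every cell of $\octfmixed$ sits in a facet of $\octagon_{d}$, hence in $\partial\octagon_{d}$. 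I would also check closure under faces: a face of $\cube_{S}+S\cdot\sigma_{F}$ has the form $\cube_{S'}+S\cdot\sigma_{F'}$ with $\cube_{S'}$ a face of $\cube_{S}$ and $\sigma_{F'}$ a face of $\sigma_{F}$ in $\fmixed$ (here I use that $\fmixed$ is a subdivision, hence closed under faces, together with the standard description of faces of a mixed cell by subforests with no isolated node in $\ground$); since $S'$ agrees with $S$ on $\supp(S)\supseteq\supp_{\rows}(F')$ we have $S\cdot\sigma_{F'}=S'\cdot\sigma_{F'}$, so the face equals $\sigma_{(S',F')}\in\octfmixed$.

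The crux is that for each facet $G_{T}$ the subcollection $\{C\in\octfmixed:C\subseteq G_{T}\}$ is a polyhedral subdivision of $G_{T}$. By the containment criterion this subcollection is precisely $\{C'+C'':C'\text{ a face of }\cube_{T},\ C''\in T\cdot(\fmixed|_{\supp(T)})\}$, where $\fmixed|_{\supp(T)}$ is the restriction of $\fmixed$ to the face $n\conv\{\ee_{i}:i\in\supp(T)\}$ of $n\ssimplex_{d-1}$ (a polyhedral subdivision of that face), so that $T\cdot(\fmixed|_{\supp(T)})$ subdivides $\crosspolytope^{T}$. Since $G_{T}\cong\cube_{T}\times\crosspolytope^{T}$ and the faces of $\cube_{T}$ form the trivial subdivision of $\cube_{T}$, this subcollection is the image of the product subdivision $(\text{faces of }\cube_{T})\times(T\cdot\fmixed|_{\supp(T)})$ under the product isomorphism, hence a subdivision of $G_{T}$; the direct-sum structure makes the covering, face-closure, and intersection verifications routine, as each cell $C'+C''$ is a box splitting along the two coordinate blocks. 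I expect this to be the main obstacle: identifying exactly which cells $\sigma_{(S,F)}$ land in $G_{T}$ and recognising the result as a product subdivision means juggling the nested support containments and sign agreements at once, and it rests on the fact that $\fmixed$ restricts to a subdivision on faces of $n\ssimplex_{d-1}$ and on the well-definedness of $S\cdot\sigma_{F}$ independently of the coordinates of $S$ outside $\supp_{\rows}(F)$ — which is precisely what makes the reflected cells in adjacent orthants agree along coordinate hyperplanes.

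Finally I would assemble the pieces via the elementary principle that a finite collection $\mathcal C$ of polytopes, each contained in a facet of a polytope $P$, closed under passing to faces, and restricting to a polyhedral subdivision on each facet of $P$, is a polyhedral subdivision of $\partial P$: it covers $\partial P$ because the facets do; it is closed under faces by hypothesis; and if $C_{1},C_{2}\in\mathcal C$ lie in facets $G_{1},G_{2}$, then $C_{1}\cap C_{2}$ lies in the face $G_{1}\cap G_{2}$, where each $C_{i}\cap(G_{1}\cap G_{2})$ is a face of $C_{i}$ and both belong to the induced subdivision of $G_{1}\cap G_{2}$ (the same whether read off from $G_{1}$ or from $G_{2}$), so $C_{1}\cap C_{2}$ is their common face, hence a common face of $C_{1}$ and $C_{2}$. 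Applying this with $P=\octagon_{d}$ and the verifications above shows that $\octfmixed$ is a polyhedral subdivision of $\partial\octagon_{d}$, as claimed.
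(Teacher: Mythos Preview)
Your argument is correct, but it is organized differently from the paper's. The paper proceeds by first describing the proper faces of $\octagon_{d}$ via the Minkowski-sum face rule (as you do), then checks closure under faces, and finally handles the intersection property \emph{globally}: for intersecting cells $\sigma_{S}$ and $\tau_{T}$ it shows directly, by a short coordinate computation, that $S$ and $T$ must be conformal and that $\sigma_{S}\cap\tau_{T}=(\sigma\cap\tau)_{S\circ T}$. In contrast, you localize to each facet $G_{T}\cong\cube_{T}\times\crosspolytope^{T}$, recognize the induced collection as a product subdivision, and then invoke a general assembly principle for facetwise subdivisions. Both routes are sound. The paper's direct computation has the advantage of yielding, as a by-product, the explicit formula for intersections and hence the full face-poset description $\sigma_{S}\subseteq\tau_{T}\Leftrightarrow S\geq T$ and $\sigma\subseteq\tau$ (which is used later). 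Your product-on-facets approach is cleaner conceptually---it makes transparent that $\octfmixed$ is nothing but the reflected copies of $\fmixed$ crossed with the trivial subdivision of a cube, glued along coordinate walls---and it sidesteps the explicit coordinate check, at the cost of not immediately producing the intersection formula.
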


We call the complex arising in the latter Proposition the \emph{extended patchworking complex}; we prove the statement together with more technical properties of the extended patchworking complex in Section~\ref{sec:properties+extended+patchworking+complex}. This complex is analogous to the complex $\Delta_{\omega}'$ defined in \cite[Theorem 5]{Sturmfels:1994a}, with additional cells that are dual to the  coordinate hyperplanes of $\RR^d$. The extended patchworking complex subdivides the boundary of a polytope,
and is therefore a PL sphere. Hence, we may consider its dual complex
\[
\Delta:=\octfmixed^{\dual}:=\{\sigma_{(S,F)}^{\dual}:\sigma_{(S,F)}\in\octfmixed\}.
\]
The realization of the poset as a polyhedral cell complex is further
explained in Section~\ref{sec:basics+cell+complexes}. For $i\in\widetilde{\rows}$
and $j\in\ground$, define the subcomplexes
\begin{align}
\Delta_{i} & :=\{\sigma_{(S,F)}^{\dual}\in\Delta:i\notin\supp(S)\},\label{eq:Delta_i}\\
\Delta_{j} & :=\{\sigma_{(S,F)}^{\dual}\in\Delta:\text{there exist edges \ensuremath{(i,j),(\ell,j)} in \ensuremath{F}}\label{eq:Delta_j}\\
 & \phantom{:==\{\sigma_{(S,F)}^{\dual}\in\Delta:}\text{such that }S_{i}A_{i,j}=-S_{\ell}A_{\ell,j}\neq0\}.\nonumber 
\end{align}

Recall the notion of a pseudosphere arrangement from Definition~\ref{def:pseudosphere+arrangement}.

\begin{theorem}\label{thm:arrangement_of_pseudospheres_extended}

The spaces $\norm{\Delta_{k}}$ ranging over all $k\in\widetilde{\ground}$
form an arrangement of pseudospheres within $\norm{\Delta}$ representing
the oriented matroid $\widetilde{\orientedmatroid}$.

\end{theorem}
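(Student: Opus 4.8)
The plan is to reduce Theorem~\ref{thm:arrangement_of_pseudospheres_extended} to a statement about the face poset of $\Delta$ together with the topology of the subcomplexes $\norm{\Delta_k}$, and to prove it in two stages: first a purely combinatorial identification of the covector data, and then the topological verification that the $\norm{\Delta_k}$ are genuinely PL pseudospheres cutting $\norm{\Delta}$ into the right cells. First I would make precise the promised dictionary between the cells of $\octfmixed$ and covectors of $\widetilde{\orientedmatroid}$: a cell $\sigma_{(S,F)}$ is sent to the sign vector $\psi_{\widetilde{A}}(\widetilde{S},\widetilde{F})\in\{-1,0,1\}^{\widetilde{\ground}}$, where $\widetilde{F}$ is the pointed forest obtained from $F$ by adding the edges $(i,\tilde{i})$ for $i\in\supp_{\rows}(F)$ and $\widetilde{S}$ extends $S$ suitably on $\widetilde{\rows}$ (recording which coordinate orthant we are in). Proposition~\ref{prop:covectors+from+topes} already tells us this lands in the covectors of $\widetilde{\orientedmatroid}$; the first real task is to show this map is a poset isomorphism onto the covector lattice, i.e. that it is surjective, that it reverses inclusion correctly (a face of $\sigma_{(S,F)}$ corresponds to a composition-refinement of the sign vector), and that distinct cells of $\octfmixed$ give distinct covectors. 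Surjectivity is where the structure of the triangulation $\mathcal{T}$ and the pointed polyhedral matching field is essential: every covector of $\widetilde{\orientedmatroid}$ must be realized by some covector pd-graph, and one uses the combinatorial characterization of triangulations of $\ssimplex_{d-1}\times\ssimplex_{n-1}$ (Ardila--Billey, \cite[Proposition~7.2]{ArdilaBilley:2007}) to produce the forest.

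Second, with the poset isomorphism $\Delta\cong$ covector lattice of $\widetilde{\orientedmatroid}$ in hand, I would translate the definitions \eqref{eq:Delta_i}, \eqref{eq:Delta_j} of $\Delta_i$ and $\Delta_j$ into the language of covectors: $\norm{\Delta_k}$ should be exactly the union of the (closed) cells whose covector has a zero in coordinate $k$, i.e. the locus $\{k\in X^0\}$. For $k=i\in\widetilde{\rows}$ this is immediate from \eqref{eq:Delta_i} and the definition of $\widetilde{A}=(I_{\rows}\mid A)$, since the $i$-th coordinate of the covector is $\widetilde{S}_i$. For $k=j\in\ground$ one checks that the condition ``there exist edges $(i,j),(\ell,j)$ in $F$ with $S_iA_{i,j}=-S_\ell A_{\ell,j}\neq 0$'' says precisely that column $j$ of $\widetilde{S}\widetilde{A}_{\widetilde{F}}$ contains both a positive and a negative entry, which by Definition~\ref{def:triang+to+covectors+OM} is the condition $Z_j=0$. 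So $\norm{\Delta_k}=\{\,\norm{\sigma_{(S,F)}^{\dual}}:\psi_{\widetilde{A}}(\widetilde{S},\widetilde{F})_k=0\,\}$, and similarly the two open ``sides'' of $\norm{\Delta_k}$ in $\norm{\Delta}$ are the cells with $Z_k=+$ and $Z_k=-$ respectively. Granting this, the set-theoretic claim that the $\norm{\Delta_k}$ form an arrangement representing $\widetilde{\orientedmatroid}$ is a formal consequence of the covector axioms: the intersection pattern, the sidedness, and the face structure of the arrangement are all read off from the covector lattice.

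Third comes the genuinely topological content: one must show that each $\norm{\Delta_k}$ is a PL $(d-2)$-sphere, embedded PL and centrally symmetrically in the PL $(d-1)$-sphere $\norm{\Delta}$, and that the intersections $\norm{\Delta_{\ground'}}$ behave as Definition~\ref{def:pseudosphere+arrangement} requires. Central symmetry is built in: negating $S$ (reflecting through the origin) is an automorphism of $\octfmixed$, hence of $\Delta$, that swaps the two sides of each $\Delta_k$. The sphericity is the heart of the matter, and here I would follow the strategy foreshadowed in the introduction: interpolate between $\Delta$ — a regular cell decomposition of the boundary sphere of $\octagon_d$ — and a pseudosphere arrangement, via a sequence of controlled cell merges governed by the elimination systems of Section~\ref{sec:elim_system} and the quotient machinery of Section~\ref{sec:quotients_reg_cell_cpxs}. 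Concretely, each merge is an elementary quotient of the face poset (Definition of elementary quotient: collapse $\{\sigma,\tau,\gamma\}$ where $\sigma,\tau\succ\gamma$), and one invokes the regularity-preservation results for quotients of regular CW complexes to conclude that at every stage the complex is still regular and still PL-homeomorphic to $S^{d-1}$; at the end of the interpolation the cells of $\norm{\Delta_k}$ have been assembled into a PL sphere. That the intermediate quotients are indeed elementary — i.e. that the elimination axiom (covector axiom (4)) manufactures exactly the cover relations needed — is exactly what ``elimination systems admit factorizations into elementary quotients'' should give us.

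The step I expect to be the main obstacle is the regularity of the final cell complex, i.e. showing each $\norm{\Delta_k}$ is actually a PL sphere rather than merely a subcomplex with the right face poset. Having the correct poset does not by itself guarantee that the geometric realization is a manifold (posets can be sphere-like combinatorially while failing to be PL spheres), so the crux is the ``new promising technique'' advertised in the abstract: one must show that each elementary-quotient step preserves both PL-homeomorphism type and regularity. This requires verifying that at each merge the three cells $\sigma,\tau,\gamma$ sit in the complex in a way that the identification $\sigma\cup\tau$ along $\gamma$ is again a ball (a local PL condition at $\gamma$), and that no unwanted identifications of faces occur elsewhere — essentially a shelling- or collapsibility-type argument carried out uniformly over the whole interpolation. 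Everything else — the covector bookkeeping, the sidedness, central symmetry, and the reduction of the arrangement axioms to the covector lattice — I expect to be routine given Proposition~\ref{prop:covectors+from+topes}, Corollary~\ref{coro:more+general+covectors}, and the poset-quotient formalism already set up.
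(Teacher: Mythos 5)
There is a genuine gap, and it sits at the foundation of your first two paragraphs. You assert that the cell-to-covector map is a poset isomorphism from the face poset of $\Delta$ (equivalently of $\octfmixed$) onto the covector lattice of $\widetilde{\orientedmatroid}$, and in particular that ``distinct cells of $\octfmixed$ give distinct covectors.'' This is false: many distinct cells carry the same covector --- in Example~\ref{ex:quotients+signed+forests}, $(S_1,F_1)$ and $(S_2,F_2)$ are distinct full-dimensional cells with $\varphi(S_1,F_1)=\varphi(S_2,F_2)$. The face poset of $\Delta$ is strictly larger than $\covectors(\widetilde{\orientedmatroid})$; the isomorphism holds only for the quotient $\poset(\fmixed)\eqrel_A$. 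If your claimed isomorphism were true, no cell merging would be needed at all, so your first two paragraphs are inconsistent with your third, where you correctly invoke the quotient machinery. Relatedly, your plan to prove surjectivity directly by constructing a forest for each covector via Ardila--Billey is not how the argument goes (and would be substantially harder): the paper proves surjectivity of $\varphi$ on the quotient by Borsuk--Ulam, which requires first knowing that $\poset(\fmixed)\eqrel_A$ is a $(d-1)$-sphere --- and that fact is itself a consequence of the regularity of the quotient complex. So the logical order must be: factor the quotient into elementary quotients (Theorem~\ref{thm:elim_system_poset_admits_factorization}), conclude $\Delta\eqrel_A$ is a regular cell complex and hence a PL $(d-1)$-sphere (Corollary~\ref{cor:factorization_implies_quotient_complex_is_regular}), and only then get the isomorphism $\poset(\fmixed)\eqrel_A\simeq\covectors(\widetilde{\orientedmatroid})$.

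Your third paragraph also overstates what remains to be done topologically. Once $\Delta\eqrel_A$ is known to be a \emph{regular} cell complex whose face poset is the covector lattice, the uniqueness part of the Topological Representation Theorem (\cite[Theorem~4.3.3, Proposition~4.3.6]{BLSWZ:1993}, quoted as Theorem~\ref{thm: arrangement of pseudo from order cpx}) hands you the entire pseudosphere arrangement: each zero-locus subcomplex is automatically a PL pseudosphere with the correct sidedness and intersection pattern. Your worry that ``having the correct poset does not by itself guarantee that the geometric realization is a manifold'' is precisely the point that regularity resolves --- a regular cell complex is determined up to cellular homeomorphism by its face poset --- so there is no separate shelling- or collapsibility-type verification that each $\norm{\Delta_k}$ is a PL sphere. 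The only remaining step, which your second paragraph essentially covers, is the identification $\norm{(\Delta\eqrel_A)_k}=\norm{\Delta_k}$ via the translation of the conditions \eqref{eq:Delta_i} and \eqref{eq:Delta_j} into the vanishing of the $k$-th covector coordinate.
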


Deleting the pseudospheres $\norm{\Delta_{i}}$ for $i\in\widetilde{\rows}$
yields the following.

\begin{corollary} The spaces $\norm{\Delta_{j}}$ ranging over all
$j\in\ground$ form an arrangement of pseudospheres representing the
oriented matroid $\orientedmatroid$. \end{corollary}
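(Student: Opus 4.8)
The plan is to deduce the corollary directly from Theorem~\ref{thm:arrangement_of_pseudospheres_extended}, using the classical principle that deleting an element of an oriented matroid corresponds to deleting the associated pseudosphere from a representing arrangement. By construction $\orientedmatroid$ is the restriction of $\widetilde{\orientedmatroid}$ to $\ground$, that is, $\orientedmatroid=\widetilde{\orientedmatroid}\setminus\widetilde{\rows}$, and on the level of covectors this deletion is realized by the restriction map $\covectors(\widetilde{\orientedmatroid})\to\covectors(\orientedmatroid)$, $X\mapsto X|_{\ground}$. Since Theorem~\ref{thm:main} exhibits $\orientedmatroid$ as a uniform rank-$d$ oriented matroid on $\ground$, the set $\ground$ spans $\widetilde{\orientedmatroid}$, so every intermediate deletion keeps the rank equal to $d$; in particular no element of $\widetilde{\rows}$ is a coloop forcing the ambient sphere to drop dimension, and the deleted arrangement still lives on $\norm{\Delta}$, a PL $(d-1)$-sphere.

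The first step is to observe that a subfamily of a pseudosphere arrangement is again a pseudosphere arrangement in the sense of Definition~\ref{def:pseudosphere+arrangement}: central symmetry, the fact that each $\norm{\Delta_k}$ is a PL $(d-2)$-sphere equipped with a positive and a negative side, and the intersection conditions required for subsets $\ground'\subseteq\ground$ are all special cases of the corresponding assertions for the full family $(\norm{\Delta_k})_{k\in\widetilde{\ground}}$ supplied by Theorem~\ref{thm:arrangement_of_pseudospheres_extended}. Hence $(\norm{\Delta_j})_{j\in\ground}$ is a pseudosphere arrangement inside $\norm{\Delta}$.

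It then remains to identify its face lattice with $\covectors(\orientedmatroid)$. Each cell of the stratification of $\norm{\Delta}$ cut out by the full arrangement carries a covector of $\widetilde{\orientedmatroid}$, namely its sign vector relative to the spaces $\norm{\Delta_k}$; the strata of the subarrangement $(\norm{\Delta_j})_{j\in\ground}$ are obtained by merging those cells whose $\ground$-coordinates agree, and the sign vector of such a stratum relative to the $\norm{\Delta_j}$ is exactly the common restriction to $\ground$. Thus the face poset of $(\norm{\Delta_j})_{j\in\ground}$ equals $\{X|_{\ground}:X\in\covectors(\widetilde{\orientedmatroid})\}=\covectors(\orientedmatroid)$, which is precisely what ``representing $\orientedmatroid$'' means. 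The only point that needs genuine care is that these merged strata are PL cells of the expected dimension, i.e.\ that passing to the subarrangement creates no topologically degenerate faces; this is exactly the compatibility of the Topological Representation Theorem with deletion, for which I would cite \cite[Chapter~5]{BLSWZ:1993} rather than reprove it. I do not anticipate any serious obstacle in this corollary beyond that routine verification.
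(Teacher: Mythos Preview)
Your proposal is correct and follows exactly the paper's approach: the paper's entire proof is the single sentence ``Deleting the pseudospheres $\norm{\Delta_{i}}$ for $i\in\widetilde{\rows}$ yields the following,'' and you have simply unpacked what that deletion means on the level of arrangements and covectors. Your additional care about rank preservation and the citation to \cite[Chapter~5]{BLSWZ:1993} for the compatibility of deletion with topological representation is appropriate and more than the paper itself provides.
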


\begin{figure}
\begin{centering}
\includegraphics{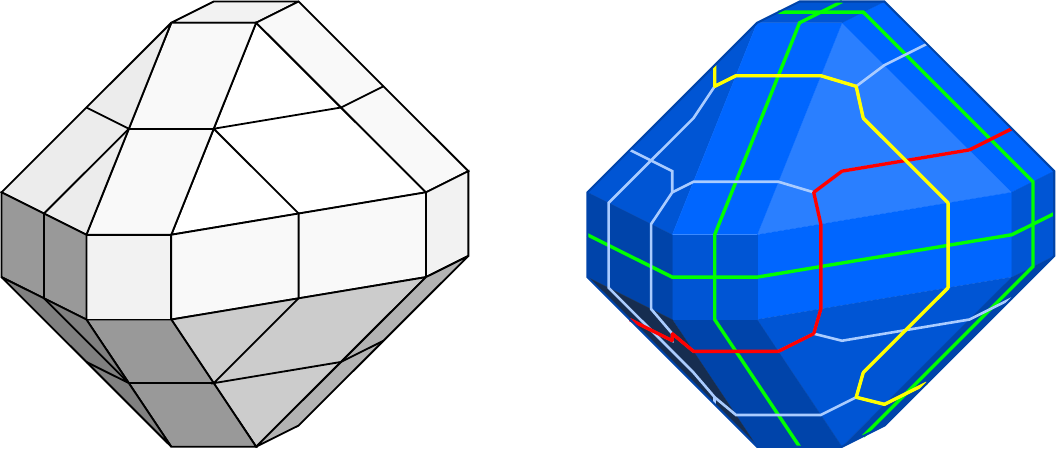}
\par\end{centering}
\caption{The complex $\protect\octfmixed$ (left) and its dual $\Delta=\protect\octfmixed^{\protect\dual}$
(right). The five subcomplexes of $\Delta$ that yield pseudospheres
are highlighted: there are three of the form $\Delta_{i},\;i\in\protect\rows$
(shown in green) and two of the form $\Delta_{j},\;j\in\protect\ground$
(shown in red and yellow).}
\end{figure}

\subsection{Overview of the Proof of Theorem~\ref{thm:arrangement_of_pseudospheres_extended}}

\label{subsec:proof_of_patchworking_theorem}
%Since it involves a lot of ingredients, we spread the proof of Theorem \ref{thm:arrangement_of_pseudospheres_extended} out over this subsection, with some of the more technical proofs relegated to Section~\ref{sec:pf_main_theorem}.

Before getting into the technical details of our proof, we explain the overall picture.
The codimension one skeleton of $\Delta$ in each orthant is a tropical pseudohyperplane arrangement in the sense of~\cite{ArdilaDevelin:2009}, that is, a union of PL homeomorphic images of tropical hyperplanes (codimension one skeleton of $\ssimplex_{d-1}^{\dual}$).
Including the sign data, each $\Delta_k$ restricted to an orthant is either empty or is (the boundary of) a {\em tropical (pseudo)halfspace} in the sense of \cite{Joswig:2005}.
The latter is obtained from a tropical pseudohyperplane by removing the facets that lie between two regions of the same sign.
As such, the arrangement of $\Delta_k$'s can be thought as the end product of a facet removal process for multiple tropical pseudohyperplanes across multiple orthants.

Using the results from \cite{CelayaLohoYuen:2020} (as summarized in Section~\ref{sec:part+I} here), we can show that the face poset of the arrangement of $\Delta_k$'s equals the covector lattice of $\orientedmatroid$.
The challenge now becomes topological: we need to make sure that the facet removal process does not create pathologies, so the topological structure reflects the combinatorial structure.
Our approach is to formulate this process as a stepwise cell merging process, using the formalism of regular cell complexes.
The removal of a facet determines an equivalence relation on the cells of the tropical pseudohyperplane arrangement: two cells are equivalent if their interiors intersect the interior of a common cell once the facet is removed.
By taking the union of the cells in each equivalence class, we show that we get another regular cell complex with the same underlying
topological space.
Iterating this procedure, we show that we end up at a regular cell complex.
Since the face poset of a regular cell complex determines the complex up to cellular homeomorphism, this completes the proof. Figure~\ref{fig:cell_merging_example} depicts a two-dimensional example where na\"ive cell merging does not preserve regularity, hence justisfying the technical work here.

Hersh describes a similar step-by-step process in \cite[\S 4]{Hersh:2014} to simplify a regular cell complex while preserving homeomorphism type.
Her single step involves collapsing a single cell to a cell on the boundary, while ours involves merging two neighbouring cells together.
Thus, in some sense, her approach might be considered dual to ours.

We remark that the work in this section is very closely related to the work done by Horn in \cite[Chapter 6]{Horn1}.
Indeed, one approach to proving Theorem~\ref{thm:arrangement_of_pseudospheres_extended} is to directly use her second Topological Representation Theorem for tropical oriented matroids. Taking this approach, one would then show that the $2^d$ affine pseudohyperplane arrangements guaranteed by Horn's theorem (some can be empty) glue together to form a pseudosphere arrangement, and that this pseudosphere arrangement represents the desired oriented matroid $\orientedmatroid$.

However, one of our goals in this paper is to furnish a proof of Theorem~\ref{thm:arrangement_of_pseudospheres_extended} that is almost entirely combinatorial. We achieve this by making use of the correspondence between tropical hyperplane arrangements and generic tropical oriented matroids, as well as the correspondence between a regular cell complex and its face poset. In particular, we show how the \emph{elimination axiom} of tropical oriented matroids enables our cell merging process to work, which might lead to extensions of our method. 

\begin{figure}
\begin{centering}
\includegraphics{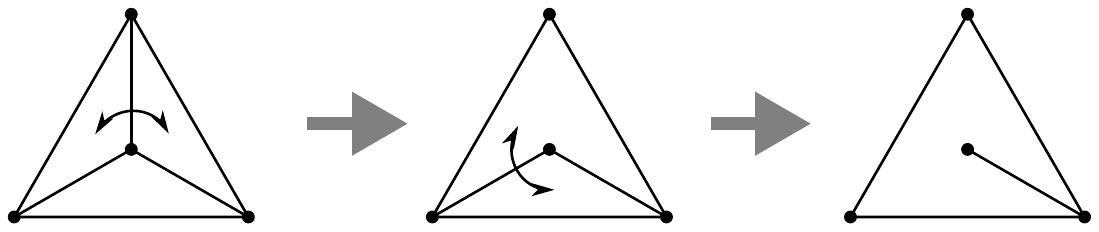}
\par\end{centering}
\caption{Two cell-merging steps of a planar embedding of the complete graph $K_4$ into the plane. The first merging step results in a regular CW complex, however the second does not.}
\label{fig:cell_merging_example}
\end{figure}

\subsection{Relation to Real Bergman Fan and Complex}

To close this section, we sketch the relation of our construction with the {\em real Bergman fan} of the oriented matroid, considered by the first author in \cite[Chapter~2]{Celaya:2019}.
The real Bergman fan generalizes to oriented matroids the more well-known {\em Bergman fan} of a matroid, which is itself the tropical analogue of a linear subspace.
In particular, the Bergman fan of a matroid is the union of cones taken over all flags of flats, whereas the real Bergman fan is the union of cones over all flags of {\em conformal covectors}:

\begin{definition}
Let $\Lcal$ be the collection of nonzero covectors of an oriented matroid $\orientedmatroid$. Identify each sign vector $X\in\Lcal$ with its associated lattice point ${\bf e}_X\in [-1,1]^E\cap\ZZ^E$.
Then the {\em real Bergman fan} $\Sigma_\orientedmatroid^*$ of $\orientedmatroid$ is the collection of all cones of the form
$$
\cone\{{\bf e}_{X_1},\ldots,{\bf e}_{X_k}\}\subset\RR^E
$$
where $X_1<\ldots<X_k$ and each $X_i\in\Lcal$. The {\em real Bergman complex} $\Delta_\orientedmatroid^*$ of $\orientedmatroid$ is the intersection of $\Sigma_\orientedmatroid^*$ with the boundary of the hypercube $[-1,1]^E$.
\end{definition}

We note that, after taking the componentwise logarithm, the real Bergman fan (resp. complex) restricted positive orthant coincides with the positive Bergman fan (resp. complex) considered by Ardila, Klivans, and Williams \cite{AKW:2006}. Conversely, the real Bergman fan can be recovered from the positive Bergman fans of all reorientations of $\orientedmatroid$. Such fans were used in the realizable setting by J\"urgens in \cite{Jurgens:2018}. See \cite[Chapter~2.4]{Celaya:2019} for further combinatorial properties of the real Bergman fan.

The complex $\Delta_{\orientedmatroid}^*$ is a geometric realization of the order complex of $\Lcal$. Hence, a direct consequence of the Topological Representation Theorem is that this complex is PL homeomorphic to a sphere of dimension $d-1$, and its intersection with the coordinate hyperplanes of $\RR^E$ are the pseudospheres representing the elements. It is therefore natural to ask if there is a piecewise linear map from the extended patchworking complex $\Delta=\octfmixed^{\dual}$ defined in Section~\ref{sec:mixed_subdiv_to_PSA} to the real Bergman complex of the associated oriented matroid $\widetilde{\orientedmatroid}$, one which respects the pseudosphere arrangement structure. This can indeed be carried out; we omit the details as they are routine:

\begin{proposition}Define the following map on the vertices $\sigma_{(S,F)}$ of $\octfmixed$ into $\RR^{\widetilde{\ground}}$:
\[
\sigma_{(S,F)}\mapsto\left( S, \oneone^\top (SA_F) \right)\in \RR^{\widetilde{\ground}}.
\]
Here $SA_F$ is the matrix as in Definition~\ref{def:signed+forest}, and $\oneone$ denotes the vector of all ones. Extend this map linearly on each maximal cell of $\octfmixed$, to get a map 
\[
\norm{\Delta}=\bigl\Vert\octfmixed\bigr\Vert\rightarrow\RR^{\widetilde{\ground}}. 
\]
Then this map is well defined, and the image of this map is precisely $\Delta^*_{\widetilde{\orientedmatroid}}$.
 
Furthermore, the choices implicit in the construction of $\octfmixed^{\dual}$ can be made so that this map respects the cellular structure of the pseudosphere arrangement $\norm{\Delta_{k}}$ over all $k\in\widetilde{\ground}$ as given by Theorem~\ref{thm:arrangement_of_pseudospheres_extended}, and the pseudosphere arrangement obtained by intersecting $\Delta^*_{\widetilde{\orientedmatroid}}$ with each of coordinate hyperplane of  $\RR^{\widetilde{\ground}}$.
\end{proposition}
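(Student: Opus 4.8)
The plan is to write the map down explicitly on each cell of $\octfmixed$, check that it lands in $\Delta^*_{\widetilde\orientedmatroid}$, compute the image of every cell, and finally reconcile the two pseudosphere arrangements. For a cell $\sigma_{(S,F)}$ of $\octfmixed$ put $\widetilde F:=F\cup\{(i,i):i\in\supp(S)\}$ and $X_{(S,F)}:=\psi_{\widetilde A}(S,\widetilde F)=\bigl(S,\psi_A(S,F)\bigr)$; one checks the hypotheses of Proposition~\ref{prop:covectors+from+topes}, so $X_{(S,F)}$ is a covector of $\widetilde\orientedmatroid$, and it is a tope exactly when $S$ has full support and $F$ is a \emph{section} (i.e.\ $\deg_F(j)=1$ for all $j\in\ground$), which is the case of a vertex of $\octfmixed$. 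For such a section every column of $SA_F$ has a single nonzero entry, hence $\oneone^\top(SA_F)=\psi_A(S,F)$, so the prescribed vertex image $\bigl(S,\oneone^\top(SA_F)\bigr)$ is the lattice point $\ee_{X_{(S,F)}}$ of a tope.

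\textbf{Well-definedness.} To extend the map I would use that for a \emph{fine} mixed cell $\sigma_F=\sum_{j\in\ground}\ssimplex_{\neighbourhood_F(j)}$ (writing $\ssimplex_A:=\conv\{\ee_i:i\in A\}$) the summation map $\prod_j\ssimplex_{\neighbourhood_F(j)}\to\sigma_F$ is an affine isomorphism, so each point of $\sigma_{(S,F)}=\cube_S+S\cdot\sigma_F$ is uniquely $c+S\cdot\sum_j x^{(j)}$ with $c\in\cube_S$ and $x^{(j)}\in\ssimplex_{\neighbourhood_F(j)}$, depending affinely on the point, and set
\[
 \phi\Bigl(c+S\cdot\textstyle\sum_j x^{(j)}\Bigr):=\Bigl(\,c,\ \bigl(\textstyle\sum_{i\in\neighbourhood_F(j)}x^{(j)}_i\,S_iA_{i,j}\bigr)_{j\in\ground}\,\Bigr).
\]
On vertices this reproduces the prescribed formula, it is affine on each cell, and the formula on a face is the restriction of the formula on any larger cell (the Minkowski decompositions agree under passage to faces), so $\phi$ is well defined and piecewise linear --- this is what ``extend linearly on maximal cells'' yields. (The map need not be injective, as $\octfmixed$ in general properly refines the dual cell structure.)

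\textbf{The image.} In the parametrisation above the $j$-th coordinate of $\phi$ is a convex combination of the numbers $S_iA_{i,j}$, $i\in\neighbourhood_F(j)$; hence it equals $\psi_A(S,F)_j$ when that is nonzero and lies in $(-1,1)$ otherwise, while $c$ agrees with $S$ on $\supp(S)$. So $\phi(\sigma_{(S,F)})$ agrees with $X_{(S,F)}$ on $\supp(X_{(S,F)})$. The vertices of $\sigma_{(S,F)}$ are the $\sigma_{(S',F')}$ with $S'$ a full-support completion of $S$ and $F'$ a section of $F$, and as these range their images $\ee_{X_{(S',F')}}$ run through exactly the topes $T$ of $\widetilde\orientedmatroid$ with $T\ge X_{(S,F)}$; being affine, $\phi$ maps $\sigma_{(S,F)}$ onto $P_{(S,F)}:=\conv\{\ee_T:T\text{ a tope of }\widetilde\orientedmatroid,\ T\ge X_{(S,F)}\}$. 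I would then invoke the oriented-matroid fact that for any nonzero covector $Z$ the polytope $\conv\{\ee_T:T\text{ a tope},\ T\ge Z\}$ is tiled by exactly those closed simplices $\conv\{\ee_{Y_1},\dots,\ee_{Y_k}\}$ of $\Delta^*_{\widetilde\orientedmatroid}$ with $Z\le Y_1<\dots<Y_k$ (the link of $Z$ is a loopless oriented matroid, whose topes are centrally symmetric, so $\ee_Z\in\conv\{\ee_T:T\ge Z\}$, after which one inducts on the corank of $Z$ over the faces of the hull). In particular $P_{(S,F)}\subseteq\Delta^*_{\widetilde\orientedmatroid}$, and every maximal simplex of $\Delta^*_{\widetilde\orientedmatroid}$, whose minimum is a cocircuit, lies in some $P_{(S,F)}$. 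Since $\sigma\mapsto X_\sigma$ sends the maximal cells of $\octfmixed$ onto the cocircuits of $\widetilde\orientedmatroid$, and $\phi(\sigma)\subseteq\phi(\sigma')$ whenever $\sigma$ is a face of $\sigma'$, we conclude $\operatorname{im}\phi=\bigcup_{\text{maximal }\sigma}P_\sigma=\Delta^*_{\widetilde\orientedmatroid}$.

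\textbf{The two pseudosphere arrangements, and the main obstacle.} Since $\phi$ is piecewise linear and carries each cell of $\octfmixed$ onto a subcomplex of the order-complex structure of $\Delta^*_{\widetilde\orientedmatroid}$, one may realise the dual complex $\Delta=\octfmixed^\dual$ as a subdivision of $\norm{\octfmixed}$ that $\phi$ maps onto the order-complex subdivision of $\Delta^*_{\widetilde\orientedmatroid}$, sending $\sigma_{(S,F)}^\dual$ to the dual cell of the covector $X_{(S,F)}$ --- this is the ``choice implicit in $\octfmixed^\dual$''. With such a choice, for $y$ in the relative interior of a cell of $\Delta$ one checks coordinatewise that $\sign(\phi(y)_k)=(X_{(S,F)})_k$: for $k=i\in\widetilde\rows$ this is the $\cube_S$-coordinate $c_i$, matching~\eqref{eq:Delta_i}, and for $k=j\in\ground$ it is the convex combination above, which is nonzero of the common sign precisely when no edge pair in $F$ forces a sign change, matching~\eqref{eq:Delta_j}. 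Hence $\phi$ takes the face of the arrangement $(\norm{\Delta_k})_k$ labelled by a covector $X$ onto the face of $\bigl(\Delta^*_{\widetilde\orientedmatroid}\cap\{y_k=0\}\bigr)_k$ labelled by the same $X$, which is the asserted compatibility. I expect the hard part to be the convex-geometry lemma on hulls of topes used for the image (together with keeping track that $\octfmixed$ properly refines the dual cell structure, so $\phi$ collapses some cells); affineness, face-compatibility, and the sign bookkeeping of the last step are routine.
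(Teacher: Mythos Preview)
The paper explicitly omits this proof, writing only ``This can indeed be carried out; we omit the details as they are routine.'' So there is nothing to compare your argument against; what matters is whether your sketch is sound, and it is.

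A few remarks that may tighten the write-up. Your affine-isomorphism claim for the Minkowski sum map $\prod_j \ssimplex_{\neighbourhood_F(j)} \to \sigma_F$ is correct precisely because $F$ is a forest: the kernel of the summation map, intersected with the tangent space of the product, is the cycle space of $F$, which is zero. This is what makes the explicit formula for $\phi$ well defined and face-compatible; note that fine mixed cells are not simplices in general (e.g.\ a path tree on $K_{3,3}$ gives a quadrilateral), so you do need this argument rather than vertex-counting.

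The step you flag as the hard part --- the lemma that $\conv\{\ee_T : T \text{ a tope},\ T \ge Z\}$ is the union of the order-complex simplices above $Z$ --- is actually straightforward here, because $\widetilde{\orientedmatroid}$ is \emph{uniform}. For a nonzero covector $Z$ of a uniform rank $d$ oriented matroid one has $|Z^0| = d - \mathrm{rk}(Z)$, so the contraction at $Z$ is a free (Boolean) oriented matroid on $|Z^0|$ elements. Hence the topes above $Z$ are exactly all $2^{|Z^0|}$ full-support extensions of $Z$, the polytope $P_Z$ is the cube $\ee_Z + [-1,1]^{Z^0}$, and the order complex of $\{Y : Y \ge Z\}$ is its barycentric subdivision. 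Your inductive sketch works, but invoking uniformity makes it a one-liner. This also confirms your description of $\phi(\sigma_{(S,F)})$: the images of the vertices of $\sigma_{(S,F)}$ are precisely the $2^{|X_{(S,F)}^0|}$ vertices of that cube (with repetition, since different sections $F'$ may yield the same sign vector).

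Your surjectivity onto cocircuits is correct as stated (every minimal class of $\poset(\fmixed)/\sim_A$ contains a minimal element of $\poset(\fmixed)$, hence every cocircuit is $X_{(S,F)}$ for some maximal cell), though not every maximal cell maps to a cocircuit --- you only need, and only claim, the surjective direction.

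The final compatibility paragraph is thin but the idea is right: choosing the barycentres $x_{\sigma_{(S,F)}}$ in the first derived subdivision of $\octfmixed$ to be $\phi$-preimages of the points $\ee_{X_{(S,F)}}$ makes $\phi$ carry the dual cell $\sigma_{(S,F)}^\dual$ onto the dual cell of $X_{(S,F)}$ in the order complex, and then the coordinate-by-coordinate sign check you give matches $\Delta_k$ with the $k$-th coordinate hyperplane section of $\Delta^*_{\widetilde{\orientedmatroid}}$.
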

%\footnote{If we work with the pointed setting, i.e., appending an identity matrix to the image matrices, then we can naturally embed the patchworking complex into a Grassmannian.}

\begin{example}
In Figure~\ref{fig:realizing+ringel}, the map can be visualized as contracting each shaded cell to a point, and each striped cell into a segment by contracting each stripe to a point on that segment.
\end{example}

\section{Elimination Systems}
\label{sec:elim_system}
In order to interpolate between fine mixed subdivisions and oriented matroid covectors, we consider a generalization of the set
of forests arising from a fine mixed subdivision which we call an \emph{elimination system}. The main result in this section is Theorem~\ref{thm:elim_system_poset_admits_factorization}, which states that a particular poset quotient associated to an elimination system admits a factorization into elementary quotients, as defined in Section~\ref{sec:poset_lattice_quotients}.

\subsection{Elimination Systems and their posets}

For a subgraph $F\subseteq\rows\times\ground$ of the complete bipartite
graph $K_{\rows,\ground}$ and $j\in\ground$, define the neighbourhood
$F_{j}:=\left\{ i:(i,j)\in F\right\} $. 

\begin{definition} \label{def:elimination+systems}

Let $\fmixed$ be a collection of subsets of $\rows\times\ground$.
Then $\fmixed$ is an \emph{elimination system} provided:
\begin{itemize}
\item[(E1)] For each $F\in\fmixed$ and for each $j\in\ground$, $F_{j}$ is
non-empty.
\item[(E2)] If $F\subseteq G\in\fmixed$ and $F_{j}$ is non-empty for all $j\in\ground$,
then $F\in\fmixed$.
\item[(E3)] If $F,G\in\fmixed$ and $j\in\ground$, then there exists $H\in\fmixed$
such that $H_{j}=F_{j}\cup G_{j}$ and $H_{k}\in\left\{ F_{k},G_{k},F_{k}\cup G_{k}\right\} $
for all $k\in\ground$ with $k\neq j$.
\end{itemize}
\end{definition}

Elimination systems are the same as generic tropical oriented matroids except without the comparability axiom; see \cite[Definition~3.5]{ArdilaDevelin:2009}.

Generalizing the face poset of the polyhedral complex of Proposition~\ref{prop:S_octagon} subdividing the boundary of $\octagon_{d}:=\cube_{d}+n\crosspolytope_{d}$, we introduce a poset associated with an elimination system. 

\begin{definition}\label{def:elim_poset}

Given an elimination system $\fmixed$, we define the following poset:
\[
\poset(\fmixed):=\left\{ (S,F):S\in\{-1,0,1\}^{\rows},\;F\in\fmixed,\;\supp(S)\supseteq\supp_{\rows}(F)\right\} .
\]
Recall from Proposition~\ref{prop:S_octagon} that $\supp_{\rows}(F)$
denotes those $i\in\rows$ such that $(i.j)\in F$ for at least one
$j\in\ground$. The ordering of the poset $\poset(\fmixed)$ is given as follows: $(S,F)\leq(T,G)$
if and only if $S\leq T$ and $F\subseteq G$. Recall that here $S\leq T$ means
that $S$ is obtained from $T$ by setting some entries to zero. For
example, $0-0+\leq+--+$; another way to see it is that the orthant
labeled by $S$ is contained in the orthant labeled by $T$. 

\end{definition}

\subsection{An Equivalence Relation of $\protect\poset(\protect\fmixed)$}

Let $\Pi$ be a partition of a finite
set $\mathsf{K}$. We say that two sign vectors $X,Y\in\{-1,0,1\}^{\mathsf{K}}$
are \emph{equivalent} (with respect to $\Pi$), and write $X\sim Y$,
if for all $s\in\{-,+\}$ and $\pi\in\Pi$, we have $X^{s}\cap\pi$
is nonempty iff $Y^{s}\cap\pi$ is nonempty. For example, the following
two sign vectors are equivalent with respect to the indicated partition
of the coordinates:

{\renewcommand{\arraystretch}{1.3}
\begin{table}[H]
\centering{}%
\begin{tabular}{c|c|c|ccc|ccc|ccccc|}
\cline{2-14} 
$\phantom{X\eqrel=Y}X:$ & $0$ & $+$ & $0$ & $-$ & $0$ & $0$ & $+$ & $-$ & $0$ & $0$ & $0$ & $+$ & $+$\tabularnewline
\cline{2-14} 
$\phantom{X\eqrel=Y}Y:$ & $0$ & $+$ & $-$ & $0$ & $0$ & $-$ & $+$ & $0$ & $+$ & $+$ & $0$ & $+$ & $0$\tabularnewline
\cline{2-14} 
\end{tabular}
\end{table}

}

This defines an equivalence relation on $\{-1,0,1\}^{\mathsf{K}}$.
We may think of each equivalence class $X\eqrel$ of this equivalence
relation as a sign vector in $\left\{ 0,+,-,\pm\right\} ^{\Pi}$.
For the above example, this would look like

{\renewcommand{\arraystretch}{1.3}
\begin{table}[H]
\centering{}%
\begin{tabular}{c|c|c|ccc|ccc|ccccc|}
\cline{2-14} 
$X\eqrel\:=Y\eqrel:$ & $0$ & $+$ & $\phantom{-}$ & $-$ & $\phantom{0}$ & $\phantom{-}$ & $\pm$ & $\phantom{-}$ & $\phantom{0}$ & $\phantom{+}$ & $+$ & $\phantom{+}$ & $\phantom{+}$\tabularnewline
\cline{2-14} 
\end{tabular}
\end{table}

}

Recall the construction of the sign matrix $SA_{F}$ associated with a sign vector $S$ and a graph $F$ on $\rows\sqcup\ground$ from Definition~\ref{def:signed+forest}. 
We introduce an equivalence relation $\sim_{A}$ based on the set of signs in each column of the sign matrix $SA_{F}$.

\begin{definition}\label{def:P(S)_homog_quotient}

Let $\Pi:=\left\{ \rows\times\{j\}:j\in\ground\right\} $ be a partition
of the edges of $K_{\rows,\ground}$. Define the following equivalence
relation $\sim_{A}$ on $\poset(\fmixed)$: Given $(S,F)$ and $(T,G)$
in $\poset(\fmixed)$, we say that $(S,F)\sim_{A}(T,G)$ if $S=T$
and $SA_{F}\sim SA_{G}$ with respect to the partition of $\rows\times\ground$
given by $\Pi$.

\end{definition}

\begin{example} \label{ex:quotients+signed+forests} Depicted below
are four elements from the poset $\poset(\fmixed)$ for Example~\ref{ex:small_patchworking}.
We show each element $(S,F)$ as $(S,SA_{F})$, noting that $F=\supp(SA_{F})$: 

  \begin{equation*}
    \begin{aligned}
      (S_1,S_1 A_{F_1}) = 
      \left(\begin{pmatrix} -\\+\\+ \end{pmatrix},
      \begin{pmatrix}
        + & - & + \\
        + & 0 & 0 \\
        - & 0 & 0
      \end{pmatrix}
      \right)\!,\, &
      (S_2,S_2 A_{F_2}) = 
      \left(\begin{pmatrix} -\\+\\+ \end{pmatrix},
      \begin{pmatrix}
        0 & - & + \\
        + & 0 & 0 \\
        - & - & 0
      \end{pmatrix}
      \right)\!,\, \\
      (S_3,S_3 A_{F_3}) = 
       \left(\begin{pmatrix} -\\+\\+ \end{pmatrix},
      \begin{pmatrix}
        0 & 0 & + \\
        + & 0 & - \\
        - & - & 0
      \end{pmatrix}
      \right)\!,\, &
      (S_4,S_4 A_{F_4}) = 
      \left(\begin{pmatrix} +\\+\\+ \end{pmatrix},
      \begin{pmatrix}
        0 & 0 & - \\
        + & 0 & - \\
        - & - & 0
      \end{pmatrix}
      \right)\!.
    \end{aligned}
  \end{equation*}
Observe that these four sign vectors correspond to four full-dimensional
cells in Figure~\ref{fig:mixed+subdivision+labeled}, of which three
are in the lower right orthant and the last is in the upper right
orthant. They correspond to cells following the red pseudoline in
Figure~\ref{fig:mixed+subdivision+PL+curves}, starting from the
triangle in the lower right orthant. We see right away that $(S_{4},F_{4})\not\sim_{A}(S_{\ell},F_{\ell})$
for $\ell=1,2,3$ as they differ in the first component. To check
for the equivalence of the other three pairs, we can consider the
image of the columns of $S_{1}A_{F_{1}},S_{2}A_{F_{2}},S_{3}A_{F_{3}}$
to $\{0,+,-,\pm\}^{3}$ as indicated before Definition~\ref{def:P(S)_homog_quotient}. 
This yields the three vectors $(\pm,-,+)$, $(\pm,-,+)$, $(\pm,-,\pm)$.
Hence, we get $(S_{1},F_{1})\sim_{A}(S_{2},F_{2})\not\sim_{A}(S_{3},F_{3})$.
\end{example}

\subsection{Properties of the Quotient $\protect\poset(\protect\fmixed)\eqrel_A$}

\label{subsec:Quotients_of_P(S)}
We assume we are given an elimination system $\fmixed$ on $\rows\times\ground$,
and a sign matrix $A\in\{-1,1\}^{\rows\times\ground}$. We denote
the poset $\poset(\fmixed)$ by~$\poset$. 

\begin{proposition}\label{prop:poset_covering}

Suppose $(S,F)$ is covered by $(T,G)$ in $\poset$. Then either
$F=G$ and $\left|S\right|=\left|T\right|-1$, or $S=T$ and $\left|F\right|=\left|G\right|+1$.

\end{proposition}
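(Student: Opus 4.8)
The plan is to exploit that $\poset:=\poset(\fmixed)$ is, apart from a single coupling constraint, a product of two posets: the usual sign-vector order on the sign vectors $S$ (where $S\le T$ means $S$ is obtained from $T$ by zeroing entries), and the forests of $\fmixed$ ordered so that $(S,F)\le(T,G)$ requires $F\supseteq G$ as edge sets; the coupling is the membership condition $\supp(S)\supseteq\supp_{\rows}(F)$. First I would show that a covering relation can change only one of the two coordinates. Suppose $(S,F)$ is covered by $(T,G)$, so $S\le T$, $F\supseteq G$, and $(S,F)\ne(T,G)$. The pair $(T,F)$ lies in $\poset$, since $\supp(T)\supseteq\supp(S)\supseteq\supp_{\rows}(F)$, and $(S,F)\le(T,F)\le(T,G)$; because $(S,F)$ is covered by $(T,G)$, the element $(T,F)$ must equal one of the endpoints, which forces $S=T$ or $F=G$.

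Next I would treat the two resulting cases. If $F=G$, then $(S,F)$ is covered by $(T,F)$ with $S<T$; were there an $S'$ with $S<S'<T$, the pair $(S',F)$ would again lie in $\poset$ (as $\supp(S')\supseteq\supp(S)\supseteq\supp_{\rows}(F)$) and strictly between $(S,F)$ and $(T,F)$, which is impossible. So $S$ is covered by $T$ in the sign-vector order, meaning $S$ agrees with $T$ on $\supp(S)$ and $\supp(T)\setminus\supp(S)$ is a singleton; hence $|S|=|T|-1$. If instead $S=T$, then $G\subsetneq F$; were $F\setminus G$ to contain two distinct edges, I could pick $e\in F\setminus G$ and set $F':=G\cup\{e\}$, obtaining $G\subsetneq F'\subsetneq F$. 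Here $F'\in\fmixed$: indeed $F'\subseteq F\in\fmixed$, and by axiom~(E1) applied to $G\in\fmixed$ every $(F')_j\supseteq G_j$ is nonempty, so axiom~(E2) yields $F'\in\fmixed$. Since also $\supp_{\rows}(F')\subseteq\supp_{\rows}(F)\subseteq\supp(S)$, the pair $(S,F')$ lies in $\poset$, strictly between $(S,F)$ and $(S,G)$, again impossible. So $F\setminus G$ is a single edge and $|F|=|G|+1$.

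I expect the only non-routine ingredient to be the membership verification $F'\in\fmixed$ in the last case: this is precisely the point at which the structure of an elimination system, as opposed to an arbitrary family of subgraphs, enters, via the deletion axiom~(E2), with axiom~(E1) supplying its hypothesis. The remainder is bookkeeping, but the support-compatibility condition in the definition of $\poset(\fmixed)$ must be re-verified for each of the interpolating pairs $(T,F)$, $(S',F)$, $(S,F')$, which I flag above; note that neither the exchange axiom~(E3) nor the sign matrix $A$ is used in this proposition.
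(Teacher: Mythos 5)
Your proposal is correct and takes essentially the same route as the paper: interpolate intermediate elements of $\poset$ and invoke the covering hypothesis, with (E1) and (E2) supplying membership of the modified forest. The only cosmetic differences are that you first decouple the two coordinates via the element $(T,F)$ and then adjoin an edge to $G$, whereas the paper deletes a support element from $T$ (resp.\ an edge from $F$) in a single step; both variants are routine.
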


\begin{proof}The fact that $(S,F)\lneq(T,G)$ means that $S\leq T$
and $F\supseteq G$, and either $S\lneq T$ or $F\supsetneq G$. If
$S\lneq T$, then let $i\in\supp(T)\smallsetminus\supp(S)$. Then
$i\notin\supp(S)$, which means $i\notin\supp_{R}(F)$. Since $F\supseteq G$,
this means $i\notin\supp_{\rows}(G)$. Hence, $(T\smallsetminus i,G)$
is an element of $\poset$ such that
\[
(S,F)\leq(T\smallsetminus i,G)\leq(T,G).
\]
Since $(S,F)$ is covered by $(T,G)$, we conclude the first inequality
holds with equality, and hence $F=G$ and $\left|S\right|=\left|T\right|-1$.

Otherwise, $F\supsetneq G$. Let $(i,j)\in F\smallsetminus G$. Then
$(i,j)$ is not the only element of $F_{j}$, since otherwise we would
have $G_{j}=\emptyset$ which is forbidden by (E1). We therefore have
$(S,F\smallsetminus(i,j))\in\poset$ by (E2), and hence
\[
(S,F)\leq(S,F\smallsetminus(i,j))\leq(T,G).
\]
By covering, we conclude the second inequality holds with equality,
and hence $S=T$ and $\left|F\right|=\left|G\right|-1$. \end{proof}

\begin{corollary}\label{cor:poset_is_graded}

The poset $\poset$ is graded, with grading $\rank(S,F)=n+\left|S\right|-\left|F\right|$.\qed

\end{corollary}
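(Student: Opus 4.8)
The plan is simply to verify that the proposed function $\rank(S,F) = n + |S| - |F|$ is an honest rank function on $\poset=\poset(\fmixed)$, i.e.\ that it increases by exactly one across every covering relation; since $\poset$ is finite, this suffices for $\poset$ to be graded (it then follows automatically that all saturated chains in any interval $[x,y]$ have the same length $\rank(y)-\rank(x)$). All of the combinatorial content has already been isolated in Proposition~\ref{prop:poset_covering}, so what remains is a one-line computation, and I expect no genuine obstacle here — the Proposition is doing all the work.

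In detail, suppose $(S,F)$ is covered by $(T,G)$ in $\poset$. By Proposition~\ref{prop:poset_covering} exactly one of two things happens. If $F=G$ and $|S|=|T|-1$, then
\[
\rank(T,G)-\rank(S,F) = \bigl(|T|-|S|\bigr)+\bigl(|F|-|G|\bigr) = 1+0 = 1 .
\]
If instead $S=T$ and $|F|=|G|+1$, then
\[
\rank(T,G)-\rank(S,F) = \bigl(|T|-|S|\bigr)+\bigl(|F|-|G|\bigr) = 0+1 = 1 .
\]
Either way the rank jumps by exactly one, so $\rank$ is a rank function and $\poset$ is graded.

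The only spot that requires a moment's care is the sign bookkeeping in the second case: the covering $(S,F)\lneq(T,G)$ forces $F\supseteq G$ and hence $|F|\ge|G|$, and Proposition~\ref{prop:poset_covering} then pins this down to $|F|=|G|+1$ (removing a single edge from $F$ yields $G$). The additive constant $n$ in the formula plays no role in the increment and is merely a normalization; in the fine mixed subdivision case it makes $\rank(S,F)$ equal to $d$ minus the dimension of the cell $\sigma_{(S,F)}\in\octfmixed$, equivalently one plus the dimension of the dual cell in $\Delta=\octfmixed^{\dual}$.
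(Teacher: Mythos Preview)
Your proof is correct and follows exactly the approach the paper intends: the corollary is marked with \qed\ in the paper because it is an immediate consequence of Proposition~\ref{prop:poset_covering}, and you have simply spelled out that one-line verification. There is nothing to add.
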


Given two sign vectors $S,T\in\{-1,0,1\}^{\rows}$, define their \emph{intersection}
$S\cap T\in\{-1,0,1\}^{\rows}$ to be the sign vector such that{
}$(S\cap T)^{+}=S^{+}\cap T^{+}$ and $(S\cap T)^{-}=S^{-}\cap T^{-}$. 

\begin{proposition}\label{prop:augmented_poset_is_lattice}

The augmented poset $\lattice(\poset):=\poset\cup\{\hat{\zero},\hat{\oneone}\}$
is a lattice: if $(S,F),(T,G)\in\poset$ have a common lower bound,
then a greatest lower bound for both is given by $\left(S\cap T,F\cup G\right)$.

\end{proposition}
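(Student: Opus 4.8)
The plan is to first pin down the meet of two elements of $\poset$ that admit a common lower bound, and then deduce the lattice property by a soft argument about finite bounded posets. Throughout I use the description of the order from the proof of Proposition~\ref{prop:poset_covering}: $(U,H)\le(S,F)$ in $\poset$ means $U\le S$ and $H\supseteq F$. Consequently any common lower bound $(U,H)\in\poset$ of $(S,F)$ and $(T,G)$ has $U\le S\cap T$ and $H\supseteq F\cup G$, so the only possible candidate for the meet is $(S\cap T,F\cup G)$, and the whole point is to show this pair actually lies in $\poset$.

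The verification has two parts, both using the witness $H$. First, $F\cup G\in\fmixed$: we have $F\cup G\subseteq H\in\fmixed$ and, for every $j\in\ground$, $(F\cup G)_j=F_j\cup G_j\supseteq F_j\neq\emptyset$ by (E1), so axiom (E2) applies. Second, the support condition $\supp(S\cap T)\supseteq\supp_{\rows}(F\cup G)$: given $i\in\supp_{\rows}(F\cup G)$, monotonicity gives $i\in\supp_{\rows}(H)$, hence $U_i\neq0$ since $(U,H)\in\poset$; but $U\le S$ and $U\le T$, so on $\supp(U)$ the vector $U$ agrees with both $S$ and $T$, giving $S_i=U_i=T_i\neq0$, i.e. $i\in\supp(S\cap T)$. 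With $(S\cap T,F\cup G)\in\poset$ established, the greatest-lower-bound property follows directly from the definition of the order. Note that only (E1) and (E2) are used; the elimination axiom (E3) is not needed for this statement.

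For the lattice claim, observe that $\lattice(\poset)$ is finite with least element $\hat{\zero}$ and greatest element $\hat{\oneone}$, and that every pair of elements has a meet: pairs involving $\hat{\zero}$ or $\hat{\oneone}$ are immediate, and for $(S,F),(T,G)\in\poset$ the meet is $(S\cap T,F\cup G)$ when a common lower bound in $\poset$ exists and is $\hat{\zero}$ otherwise. A finite poset with a greatest element in which every pair has a meet is automatically a lattice --- the join of $x$ and $y$ is the meet of the finite nonempty set of common upper bounds of $\{x,y\}$ --- which completes the argument.

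The one step I expect to require real care is the support-condition check in the second paragraph. The trap is that $\supp(S\cap T)$ can be strictly smaller than $\supp(S)\cap\supp(T)$ when $S$ and $T$ disagree in sign on some row, so it is not enough to know that $S$ and $T$ are both nonzero on $\supp_{\rows}(F\cup G)$; one genuinely needs the common lower bound to force $S$ and $T$ to carry the \emph{same} sign there, which is precisely why the hypothesis cannot be dropped and $(S\cap T,F\cup G)$ need not be in $\poset$ without it.
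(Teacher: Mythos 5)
Your proof is correct and follows essentially the same route as the paper's: both verify $F\cup G\in\fmixed$ via (E2) using the witness $H$, establish the support condition through the chain $\supp_{\rows}(F\cup G)\subseteq\supp_{\rows}(H)\subseteq\supp(U)\subseteq\supp(S\cap T)$, and conclude the meet is $(S\cap T,F\cup G)$. Your extra care about $\supp(S\cap T)$ versus $\supp(S)\cap\supp(T)$, and your explicit appeal to the standard fact that a finite meet-semilattice with a top element is a lattice, are just worked-out details of what the paper leaves implicit.
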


\begin{proof}Let $(S,F)$ and $(T,G)$ be elements of $\poset$ with
a common lower bound $(L,H)$. Then $H\supseteq F\cup G\supseteq F,G$
which implies by (E2) that $F\cup G\in\fmixed$. Similarly, we have
$L\leq S\cap T$ and so
\[
\supp_{\rows}(F\cup G)\subseteq\supp_{\rows}(H)\subseteq\supp(L)\subseteq\supp(S\cap T).
\]
We conclude $(S\cap T,F\cup G)\in\poset$ and is a lower bound of
$(S,F)$ and $(T,G)$. The fact that $H\supseteq F\cup G$ and $L\leq S\cap T$
shows that $(S\cap T,F\cup G)$ is in fact a greatest lower bound,
as $(L,H)$ was chosen arbitrarily.\end{proof}

Our next task is to generalize the equivalence relation $\sim_{A}$
on $\poset$ from Definition \ref{def:P(S)_homog_quotient}, by allowing
the partition $\Pi$ of $\rows\times\ground$ to vary. We assume fixed
a partition $\Pi$ of $\rows\times\ground$ which refines the partition
$\left\{ \rows\times\{j\}:j\in\ground\right\} $. In terms of this partition, we say $X\sim Y$
if $X^{s}\cap\pi$ is nonempty iff $Y^{s}\cap\pi$ is nonempty,
for all $s\in\{-,+\}$ and $\pi\in\Pi$. 

\begin{definition}\label{def:general_P(S)_homog_quotient}

For $(S,F),(T,G)\in\poset$, we say $(S,F)\sim_{A}(T,G)$ if and only
if $S=T$ and $SA_{F}\sim SA_{G}$. 

\end{definition}

\begin{proposition}\label{prop:eq_rel_is_P_homog}

The equivalence relation $\sim_{A}$ on $\poset$ is $\poset$-homogeneous.
In particular, $\poset\eqrel_{A}$ is a poset.

\end{proposition}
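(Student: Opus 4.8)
The plan is to verify the $\poset$-homogeneity condition directly: given $(T,G) \geq (S',F')$... wait, let me restate. We must show: if $(\tau) \leq (\sigma)$ in $\poset$, then for every element $u$ equivalent to $\tau$ there is an element $v$ equivalent to $\sigma$ with $u \leq v$. Concretely, suppose $(S,F) \leq (T,G)$ in $\poset$ and $(S,F) \sim_A (S,F')$ (so in particular $F'$ has the same support row-set issues, and $SA_{F'} \sim SA_F$). We must produce $(T,G')$ with $(S,F') \leq (T,G')$ and $(T,G') \sim_A (T,G)$. By transitivity and the fact that covers generate the order, it suffices to treat the case where $(S,F) \lessdot (T,G)$ is a cover; by Proposition~\ref{prop:poset_covering} there are two cases.

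First I would dispatch the easy case $F = G$, $|S| = |T| - 1$: here we may simply take $G' = F'$. Indeed $(S,F') \leq (T,F')$ provided $(T,F') \in \poset$, which holds since $\supp_\rows(F') = \supp_\rows(F) \subseteq \supp(S) \subseteq \supp(T)$ (the first equality because $F \sim_A F'$ forces the same columns to be nonzero, hence the same rows to be used — here one uses that the partition $\Pi$ refines $\{\rows \times \{j\}\}$, so being nonempty in some block of $\Pi$ lying over column $j$ is detected). And $(T,F') \sim_A (T,G')$... we need $TA_{F'} \sim TA_{F}$; but $SA_{F'} \sim SA_F$ and $T$ agrees with $S$ on $\supp(S)$ while the extra coordinates of $T$ correspond to isolated rows of $F$ and $F'$, so $TA_{F'} = SA_{F'}$ and $TA_F = SA_F$ as matrices (the new rows are all-zero). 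Hence $(T,F') \sim_A (T,F) = (T,G)$.

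The substantive case is $S = T$, $|F| = |G| + 1$, say $F = G \sqcup \{(i,j)\}$. Here I want to find $G'$ with $F' \subseteq G'$, $G' \in \fmixed$, $SA_{G'} \sim SA_G$, and $|G'| = |F'| + 1$ (or at least $G' \gneq F'$ in $\poset$ — then pass to a cover). The idea: $(i,j) \in F \setminus G$ means $j$ has $\geq 2$ edges in $F$; since $F \sim_A F'$, column $j$ of $SA_{F'}$ realizes the same set of signs in each $\Pi$-block over $j$ as column $j$ of $SA_F$. I would like to delete from $F'$ a suitable edge $(i',j)$ to mimic the passage $F \rightsquigarrow G$: pick $(i',j) \in F'$ such that $F' \setminus \{(i',j)\}$ still has all neighbourhoods nonempty and such that the sign-block-pattern of column $j$ is unchanged — this is possible exactly when the $\Pi$-block of $(i',j)$ over $j$ contains another edge of $F'$ with the same sign in $SA_{F'}$, mirroring why $SA_G$'s column $j$ has the same block-pattern as $SA_F$'s. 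Then set $G' := F' \setminus \{(i',j)\}$; it lies in $\fmixed$ by (E2), satisfies $F' \gneq G'$, and $SA_{G'} \sim SA_{F'} \sim SA_F \sim SA_G$. If no such edge exists — i.e. column $j$'s pattern genuinely changes when any single deletion is made but nonetheless $SA_G \sim SA_F$ — that can't happen, because $SA_G$ is literally obtained by one deletion from $SA_F$, so the ``safe to delete'' configuration is witnessed in $F$ and the equivalence transports it to $F'$. I would write this transport carefully.

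The main obstacle I anticipate is precisely this transport argument in the substantive case: making rigorous the claim that the combinatorial possibility of a ``support-preserving, block-pattern-preserving'' single-edge deletion at column $j$ in $F$ passes to $F'$, purely from $SA_F \sim SA_{F'}$ and $|F_j|, |F'_j| \geq 2$. The worry is a parity/cardinality mismatch — $F$ and $F'$ may have different numbers of edges over column $j$ — so one cannot deletion-match naively; instead I would argue at the level of the sign multiset: column $j$ of $SA_F$ uses a certain set of signs in each block, with at least one block containing $\geq 2$ entries of a common sign (this is exactly what lets $G$ keep the same pattern), and the same must be true for $F'$ since it has the same per-block sign sets and $|F'_j| \geq 2$ — except in the degenerate subcase where every block over $j$ has exactly one sign-class and the deletion in $F$ kills an entire block, which would change the pattern, contradicting $SA_G \sim SA_F$. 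So that subcase is vacuous, and otherwise the witness block for $F'$ yields the desired $(i',j)$. Once the cover cases are settled, $\poset$-homogeneity follows, and then $\poset\eqrel_A$ being a poset is immediate from the cited Proposition of Hallam--Sagan.
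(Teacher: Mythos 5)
Your reduction to covering relations is legitimate (the poset is finite and graded), and your first case ($F=G$, $|S|=|T|-1$, take $G'=F'$) is essentially correct — though note you do not need, and cannot in general have, $\supp_{\rows}(F')=\supp_{\rows}(F)$; the containment $\supp_{\rows}(F')\subseteq\supp(S)\subseteq\supp(T)$ already follows from $(S,F')\in\poset$. The genuine gap is in your second case. There you tacitly assume that the two ends of the cover are $\sim_{A}$-equivalent: you write ``$SA_{G'}\sim SA_{F'}\sim SA_{F}\sim SA_{G}$'' and later dismiss the problematic subcase because it ``would contradict $SA_{G}\sim SA_{F}$.'' But $SA_{F}\sim SA_{G}$ is not a hypothesis of homogeneity. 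A cover $(S,F)\lessdot(S,G)$ with $F=G\sqcup\{(i,j)\}$ very often \emph{does} change the equivalence class — for instance if $(i,j)$ is the unique edge of $F$ in its block $\pi$ carrying the sign $(SA_F)_{i,j}$, then that sign disappears from $SA_G\restriction_\pi$. In that situation your construction, which only deletes a single edge of $F'$ subject to the block pattern being \emph{unchanged}, can never produce a $G'$ with $(S,G')\sim_A(S,G)$. Worse, a single deletion genuinely does not suffice: to match $SA_G$ one must remove from $F'$ \emph{every} edge of $\pi$ whose sign does not survive in $SA_G$, and $F'$ may carry several such edges (e.g.\ $F$ has one $+$ and one $-$ in column $j$, $G$ drops the $+$, but $F'$ has two $+$'s and one $-$ there). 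One must then also re-verify (E1) for the pruned graph before invoking (E2).

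The paper sidesteps all of this with a single global construction that makes no case distinction and no reduction to covers: it sets $G'$ to be the set of edges $(i,j)\in F'$ whose sign $(SA_{F'})_{i,j}$ is realized by some edge of $G$ in the same block, then checks $TA_{G'}\sim TA_{G}$ in both directions (using $G\subseteq F$ and $SA_F\sim SA_{F'}$), deduces nonemptiness of each $G'_j$ from that equivalence together with (E1) for $G$, and applies (E2). If you want to salvage your cover-by-cover approach, you must replace the ``pattern-preserving single deletion'' by exactly this kind of multi-edge pruning in the case where the cover changes the block pattern; as written, the argument fails on any cover $(S,F)\lessdot(S,G)$ with $(S,F)\not\sim_{A}(S,G)$, which is the generic situation.
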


\begin{proof}

\global\long\def\restriction{\negmedspace\mid}
Let $(S,F)\leq(T,G)$ be two elements of $\poset$, and choose $(S,F')\sim_{A}(S,F)$.
Our goal is to find $G'\in\fmixed$ such that $(T,G')\in\poset$ and
$(S,F')\leq(T,G')\sim_{A}(T,G)$. Define
\begin{align*}
G' & :=\{(i,j)\in F':\text{if \ensuremath{\pi\in\Pi} contains \ensuremath{(i,j),} then there exists}\\
 & \phantom{{}:={}\{(i,j)\in F':}\text{\;\ensuremath{(\ell,j)\in\pi} such that \ensuremath{(TA_{G})_{\ell,j}=(SA_{F'})_{i,j}}}\}.
\end{align*}
Thus $(S,F')\leq(T,G')$. The definition of $G'$ ensures that every
sign appearing in the restricted sign vector $TA_{G'}\restriction_{\pi}$
also appears in $TA_{G}\restriction_{\pi}$, for all $\pi\in\Pi$.
Conversely, if $\pi\in\Pi$ and $(TA_{G})_{\ell,j}$ is nonzero for
some $(\ell,j)\in\pi$, then $SA_{F}\sim SA_{G}=TA_{G}$ implies there
exists $(i,j)\in\pi$ such that $(TA_{G})_{\ell,j}=(SA_{F})_{\ell,j}=(SA_{F'})_{i,j}$,
and therefore $TA_{G'}\restriction_{\pi}$ contains the sign $(TA_{G})_{\ell,j}$.
Note that we are using here the fact that $\Pi$ refines the partition
$\left\{ \rows\times\{j\}:j\in\ground\right\} $. We conclude $TA_{G}\sim TA_{G'}$.

Observe that $G_{j}$ is nonempty for every $j\in\ground$ by (E1),
and since $TA_{G}\sim TA_{G'}$ we also have $G'_{j}$ is nonempty
for every $j\in\ground$. Therefore, since $G'\subseteq F'$, we have
by (E2) that $G'\in\fmixed$. Moreover, $\supp_{\rows}(G')\subseteq\supp_{\rows}(F')\subseteq\supp(S)\subseteq\supp(T),$
so that $(T,G')\in\poset$. We conclude $(T,G')\sim_{A}(T,G)$. \end{proof}

For a generalized sign vector $X\eqrel\;\in\{0,+,-,\pm\}^{\Pi}$,
let $\left|X\eqrel\right|$ count the number of nonzero coordinates
in $X\eqrel$, with each $\pm$ counted twice. For example, if $X\eqrel\;=(0,\pm,-,+,-,\pm)$
then $\left|X\eqrel\right|=7$. Note that if $\Pi$ is the singleton
partition, then $X\eqrel$ is an ordinary sign vector and $\left|X\eqrel\right|=\left|X\right|$.

\begin{proposition}\label{prop:poset_quotient_is_graded}

The poset $\poset\eqrel_{A}$ is graded, with grading
\[
\rho((S,F)\eqrel_{A})=n+\left|S\right|-\left|SA_{F}\eqrel\right|.
\]

\end{proposition}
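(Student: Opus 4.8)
The plan is to verify two things about the function $\rho$ appearing in the statement: that it is well defined on the classes of $\sim_{A}$, and that it increases by exactly one along every covering relation of $\poset\eqrel_{A}$. Since $\poset\eqrel_{A}$ is finite, these together force any two saturated chains with the same endpoints to have equal length, which is exactly the assertion. For well-definedness I would simply note that $(S,F)\sim_{A}(T,G)$ forces $S=T$ (so $|S|=|T|$) and $SA_{F}\sim SA_{G}$, the latter meaning that the generalized sign vectors $SA_{F}\eqrel$ and $SA_{G}\eqrel$ coincide, whence $|SA_{F}\eqrel|=|SA_{G}\eqrel|$. It is worth recording at this point the reformulation $|SA_{F}\eqrel|=\sum_{\pi\in\Pi}|\Sigma_{\pi}(SA_{F})|$, where $\Sigma_{\pi}(SA_{F})\subseteq\{-,+\}$ is the set of signs occurring in $SA_{F}$ on the part $\pi\in\Pi$ (these entries are genuinely $\pm1$, since $\supp(S)\supseteq\supp_{\rows}(F)$); each summand lies in $\{0,1,2\}$, and equals $2$ exactly when $SA_{F}$ realizes both signs on $\pi$.

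The heart of the matter is the covering relation, and my plan is to reduce a cover of $\poset\eqrel_{A}$ to a cover of $\poset$. Given $x\lessdot y$ in $\poset\eqrel_{A}$, by definition of the quotient order one may choose representatives $(S,F)\in x$ and $(T,G)\in y$ with $(S,F)\le(T,G)$ in $\poset$; taking a saturated chain between them in the finite poset $\poset$ and projecting it to $\poset\eqrel_{A}$, every class along the way is $x$ or $y$, so the step of the chain at which the projection passes from $x$ to $y$ is a cover of $\poset$ between a representative of $x$ and a representative of $y$, hence between two $\sim_{A}$-inequivalent elements. So it suffices to show $\rho(y)=\rho(x)+1$ for a cover of $\poset$ between $\sim_{A}$-inequivalent elements, which I will simply rename $(S,F)\lessdot(T,G)$.

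By Proposition~\ref{prop:poset_covering} there are two cases. If $F=G$ and $|S|=|T|-1$, I would take the unique index $i$ with $S_{i}=0\neq T_{i}$ and observe $i\notin\supp(S)\supseteq\supp_{\rows}(F)$, so row $i$ of $F$ is empty and $SA_{F}=TA_{F}$; hence $|SA_{F}\eqrel|=|TA_{F}\eqrel|$ and $\rho(y)-\rho(x)=|T|-|S|=1$. If instead $S=T$ and $G=F\smallsetminus\{(i,j)\}$ (with $F_{j}\smallsetminus\{i\}$ still nonempty, by (E1)), then $SA_{G}$ agrees with $SA_{F}$ except at the one entry $(i,j)$, so the only part whose sign set can change is the part containing $(i,j)$, where it can only shrink, losing at most the sign $S_{i}A_{i,j}$. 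Thus $|SA_{F}\eqrel|-|SA_{G}\eqrel|\in\{0,1\}$, and it is $0$ precisely when $SA_{F}\sim SA_{G}$, i.e. when $(S,F)\sim_{A}(T,G)$ — excluded. So $|SA_{G}\eqrel|=|SA_{F}\eqrel|-1$ and again $\rho(y)-\rho(x)=1$. Combined with well-definedness this shows $\rho$ is strictly order preserving with unit increments on covers, which is precisely a grading of $\poset\eqrel_{A}$.

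I expect the main obstacle to lie in the covering step, in two respects. First, the reduction of a cover of $\poset\eqrel_{A}$ to a cover of $\poset$ between inequivalent elements relies on the description of the quotient order (there always exist comparable representatives) and on finiteness of $\poset$, and one should state carefully that the projected chain jumps from $x$ to $y$ at a single covering step. Second, in the edge-deletion case one must argue cleanly that deleting one edge of $F$ drops $|SA_{F}\eqrel|$ by exactly one precisely when inequivalence holds — here keeping track of the convention that a $\pm$-coordinate counts twice, so that on the affected part the generalized sign either is unchanged (difference $0$) or drops as $\pm\mapsto\{+\text{ or }-\}$ or as $\{+\text{ or }-\}\mapsto 0$ (difference $1$ in both), while $\pm\mapsto 0$ is impossible because the sign set loses at most one element. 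Everything else — well-definedness, monotonicity, and deducing gradedness from the unit-increment property — is purely formal.
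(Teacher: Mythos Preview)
Your argument is correct, and it follows a somewhat different route from the paper's. Both proofs begin the same way: reduce a cover $x\lessdot y$ in $\poset\eqrel_{A}$ to a cover $(S,F)\lessdot(T,G)$ in $\poset$ between $\sim_{A}$-inequivalent representatives (you are more explicit than the paper about why this reduction works, via the saturated-chain argument). At that point the two proofs diverge. You invoke Proposition~\ref{prop:poset_covering} directly and compute the increment of $\rho$ in each of the two cases, tracking how $|SA_{F}\eqrel|$ changes under a single edge deletion or a single sign activation. The paper instead proves an auxiliary characterization: an element $(S,F)$ is maximal in its $\sim_{A}$-class iff $|(SA_{F})^{s}\cap\pi|\le 1$ for all $s$ and $\pi$, and observes that at such maximal representatives $\rho$ coincides with the rank $\rank$ of $\poset$ from Corollary~\ref{cor:poset_is_graded}; it then argues that both endpoints of the chosen cover are maximal, so $\rho$ inherits the unit increment from $\rank$. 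Your approach is more self-contained and avoids the maximality detour; the paper's approach buys the extra structural fact that $\rho$ agrees with $\rank$ on maximal representatives, a description that is reused in the proof of Theorem~\ref{thm:elim_system_poset_admits_factorization}.
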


\begin{proof}

Fix $(S,F)\in\poset$. First note that $(S,F)$ is a maximal
element in the equivalence class $(S,F)\eqrel_{A}$ if and only if
$\left|(SA_{F})^{s}\cap\pi\right|\leq1$ for all $s\in\{-,+\}$ and
all $\pi\in\Pi$. Indeed, choose any $(S,G)\sim_{A}(S,F)$. Then (E2)
implies that we may find $(S,H)\geq(S,G)$ inside $(S,F)\eqrel_{A}$
such that $\left|(SA_{H})^{s}\cap\pi\right|\leq1$ for all $s\in\{-,+\}$
and all $\pi\in\Pi$. In particular, this statement holds for the
maximal elements of $(S,F)\eqrel_{A}$.

Now, for every maximal element $(S,G)\sim_{A}(S,F)$, we have
\begin{align*}
\rank((S,G)\eqrel_{A}) & =n+\left|S\right|-\left|SA_{G}\eqrel\right|\\
 & =n+\left|S\right|-\sum_{\pi\in\Pi}\left(\left|(SA_{G})^{+}\cap\pi\right|+\left|(SA_{G})^{-}\cap\pi\right|\right)\\
 & =n+\left|S\right|-\left|G\right|\\
 & =\rank(S,G).
\end{align*}

It remains to show that $\rho$ respects the covering relations. Suppose
that $(S,F)\eqrel_{A}$ is covered by $(T,G)\eqrel_{A}$ in $\poset\eqrel_{A}$.
By homogeneity, we may choose representatives $(S,F)$ and $(T,G)$
so that $(S,F)$ is covered by $(T,G)$ in $\poset$. Such an element
$(S,F)$ is necessarily a maximal element of the equivalence class
$(S,F)\eqrel_{A}$, which implies $\left|(SA_{F})^{+}\cap\pi\right|\leq1$
and $\left|(SA_{F})^{-}\cap\pi\right|\leq1$ for all $\pi\in\Pi$.
Since $(S,F)<(T,G)$, we have $TA_{G}=SA_{G}\leq SA_{F}$, and hence
$\left|(TA_{G})^{+}\cap\pi\right|\leq1$ and $\left|(TA_{G})^{-}\cap\pi\right|\leq1$
for all $\pi\in\Pi$. It follows $(T,G)$ is maximal in $(T,G)\eqrel_{A}$.
We conclude
\[
\rank((S,F)\eqrel_{A})=\rank(S,F)=\rank(T,G)-1=\rank((T,G)\eqrel_{A})-1.\qedhere
\]
\end{proof}

\begin{proposition}\label{prop:augmented_poset_quotient_is_a_lattice}

The augmented poset $\lattice(\poset\eqrel_{A})$ is a lattice. 

\end{proposition}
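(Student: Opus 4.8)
The plan is to reduce the statement to the existence of binary meets and to compute those meets by lifting to the lattice $\lattice(\poset)$ of Proposition~\ref{prop:augmented_poset_is_lattice}. Since $\poset\eqrel_{A}$ is finite and $\lattice(\poset\eqrel_{A})$ has a top element $\hat{\oneone}$ and a bottom element $\hat{\zero}$, it suffices to show that any two classes $c_{1}=(S,F)\eqrel_{A}$ and $c_{2}=(T,G)\eqrel_{A}$ have a greatest lower bound in $\lattice(\poset\eqrel_{A})$: a finite poset with a top element in which every pair admits a meet also admits all binary joins, hence is a lattice, and $\hat{\zero}$ is the meet of $c_{1},c_{2}$ whenever they have no common lower bound in $\poset\eqrel_{A}$. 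So I would fix two classes that do admit a common lower bound and produce their meet explicitly.

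The second step lifts a common lower bound from the quotient back to $\poset$. Since $\sim_{A}$ is $\poset$-homogeneous (Proposition~\ref{prop:eq_rel_is_P_homog}), whenever a class of $\poset\eqrel_{A}$ lies below another, every representative of the lower one lies below some representative of the upper one (\cite[Lemma~5]{HallamSagan:2015}). Hence, if $(L,H)\eqrel_{A}$ is a common lower bound of $c_{1}$ and $c_{2}$, then the one representative $(L,H)$ lies below some $(S,F')$ with $(S,F')\sim_{A}(S,F)$ and below some $(T,G')$ with $(T,G')\sim_{A}(T,G)$ (note $\sim_{A}$ fixes the first coordinate). Thus $(L,H)$ is an honest common lower bound of $(S,F')$ and $(T,G')$ in $\poset$, so by Proposition~\ref{prop:augmented_poset_is_lattice} their meet in $\lattice(\poset)$ is $(S\cap T,\,F'\cup G')\in\poset$, and it lies above $(L,H)$. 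Projecting back, $(S\cap T,\,F'\cup G')\eqrel_{A}$ is a common lower bound of $c_{1}$ and $c_{2}$ dominating $(L,H)\eqrel_{A}$; so every common lower bound of $c_{1},c_{2}$ is dominated by a class of this form.

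The heart of the matter---and the step I expect to be the main obstacle---is to show that the class $(S\cap T,\,F'\cup G')\eqrel_{A}$ does not depend on the auxiliary choices of $(L,H)$, $F'$, and $G'$; once this is done it is forced to be \emph{the} greatest common lower bound, and the lattice property follows. The key observation is that the mere existence of the common lower bound $(L,H)$ forces $\supp_{\rows}(F')\cup\supp_{\rows}(G')\subseteq\supp(L)\subseteq\supp(S\cap T)$, using $F',G'\subseteq H$, $\supp_{\rows}(H)\subseteq\supp(L)$, and $L\leq S$, $L\leq T$. Therefore every edge of $F'\cup G'$ is incident to $\supp(S\cap T)$, so on $F'$ the matrix $(S\cap T)A_{F'\cup G'}$ agrees with $SA_{F'}$ and on $G'$ it agrees with $TA_{G'}$. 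Consequently, for every block $\pi\in\Pi$ (which lies inside a single $\rows\times\{j\}$, since $\Pi$ refines $\{\rows\times\{j\}:j\in\ground\}$), the set of signs occurring in the $\pi$-block of $(S\cap T)A_{F'\cup G'}$ is precisely the union of those occurring in the $\pi$-blocks of $SA_{F'}$ and of $TA_{G'}$. Because $SA_{F'}\sim SA_{F}$ and $TA_{G'}\sim TA_{G}$, that union equals the union of the corresponding sign sets of $SA_{F}$ and $TA_{G}$, which no longer depends on $(L,H),F',G'$. This pins down $(S\cap T,\,F'\cup G')\eqrel_{A}$ as a well-defined class that dominates every common lower bound while being one itself, hence the meet of $c_{1}$ and $c_{2}$; the lattice property of $\lattice(\poset\eqrel_{A})$ follows. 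As a byproduct, when the meet is not $\hat{\zero}$ it is exactly the class with first coordinate $S\cap T$ whose $\pi$-block sign set, for each $\pi\in\Pi$, is the union of those of $SA_{F}$ and $TA_{G}$.
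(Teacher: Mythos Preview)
Your proposal is correct and follows essentially the same route as the paper's proof: both lift a common lower bound from $\poset\eqrel_{A}$ to $\poset$ via homogeneity, take the meet $(S\cap T,F'\cup G')$ there using Proposition~\ref{prop:augmented_poset_is_lattice}, and then verify via the block-sign computation that the resulting equivalence class is independent of the choices. The only cosmetic difference is that the paper first fixes one distinguished pair of representatives $(S,F),(T,G)$ with $(S\cap T,F\cup G)\in\poset$ and shows every other $(S\cap T,F'\cup G')$ is $\sim_{A}$-equivalent to it, whereas you phrase the same computation as direct well-definedness of the class; the underlying identity $((S\cap T)A_{F'\cup G'})^{s}\cap\pi\neq\emptyset\iff((SA_{F})^{s}\cup(TA_{G})^{s})\cap\pi\neq\emptyset$ is identical in both.
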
\global\long\def\signs{\mathrm{signs}}

\begin{proof}Choose $(S,F)\eqrel_{A}$ and $(T,G)\eqrel_{A}$ with
a common lower bound in $\poset\eqrel_{A}$. By homogeneity and Proposition~\ref{prop:augmented_poset_is_lattice},
we may choose the representatives $(S,F)$ and $(T,G)$ so that $(S\cap T,F\cup G)\in\poset$.
By homogeneity, then, $(S\cap T,F\cup G)\eqrel_{A}$ is a lower bound
for both $(S,F)\eqrel_{A}$ and $(T,G)\eqrel_{A}$.

We show this is a greatest lower bound. Given a lower bound $(L,H)\eqrel_{A}$,
we may find $(S,F')\sim_{A}(S,F)$ and $(T,G')\sim_{A}(T,G)$ such
that $(L,H)\leq(S,F')$ and $(L,H)\leq(T,G')$ in $\poset$. Hence,
by Proposition~\ref{prop:augmented_poset_is_lattice}, $(L,H)\leq(S\cap T,F'\cup G')\in\poset$.
Therefore, it suffices to show
\[
(S\cap T,F'\cup G')\sim_{A}(S\cap T,F\cup G).
\]
For all $\pi\in\Pi$ and $s\in\{-,+\}$, we have
\begin{align*}
((S\cap T)A_{F'\cup G'})^{s}\cap\pi\text{ nonempty} & \iff((SA_{F'})^{s}\cup(TA_{G'})^{s})\cap\pi\text{ nonempty}\\
 & \iff((SA_{F})^{s}\cup(TA_{G})^{s})\cap\pi\text{ nonempty}\\
 & \iff((S\cap T)A_{F\cup G})^{s}\cap\pi\text{ nonempty.}
\end{align*}
In particular, this shows $(S\cap T,F'\cup G')\sim_{A}(S\cap T,F\cup G)$.\end{proof}

We now come to the main theorem of this section:

\begin{theorem}\label{thm:elim_system_poset_admits_factorization}

The poset $\poset(\fmixed)\eqrel_{A}$ admits a factorization $\poset(\fmixed)=\poset_{0},\poset_{1},\ldots,\poset_{k}=\poset(\fmixed)\eqrel_{A}$
into elementary quotients, such that the augmented poset $\lattice(\poset_{i})$
is a lattice for each $i=0,1,\ldots,k-1$.

\end{theorem}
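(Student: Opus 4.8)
The plan is to build the factorization one ``column-merge'' at a time, indexed by the partition $\Pi$ of $\rows\times\ground$. Start with $\Pi_0$ the singleton partition, so $\poset(\fmixed)\eqrel_{A}$ with respect to $\Pi_0$ is just $\poset(\fmixed)$ itself. We then coarsen $\Pi$ step by step: at each step we pick a block $\pi$ and an element $(i,j)$ lying in a singleton block $\{(i,j)\}$ with $(i,j)$ in the same column class $\rows\times\{j\}$ as $\pi$, and merge $\{(i,j)\}$ into $\pi$ to form a new partition $\Pi'$. After finitely many such steps we reach the partition $\left\{\rows\times\{j\}:j\in\ground\right\}$, and the corresponding quotient is exactly $\poset(\fmixed)\eqrel_{A}$ of Definition~\ref{def:P(S)_homog_quotient}. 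By Proposition~\ref{prop:eq_rel_is_P_homog}, each intermediate $\sim_A$ is $\poset$-homogeneous, so each intermediate quotient is a genuine poset; and by Proposition~\ref{prop:augmented_poset_quotient_is_a_lattice}, each augmented intermediate quotient is a lattice. So the lattice condition in the statement comes for free from the results already established, as long as we verify that each single coarsening step $\Pi\rightsquigarrow\Pi'$ is an \emph{elementary} quotient of the poset $\poset_{\Pi}:=\poset(\fmixed)\eqrel_A^{\Pi}$.

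The core of the argument is therefore: fix a coarsening that merges the singleton $\{(i,j)\}$ into the block $\pi$ (with $(i,j),\pi\subseteq\rows\times\{j\}$), and analyze when two $\sim_A^{\Pi}$-classes get identified under $\sim_A^{\Pi'}$. Two $\Pi$-classes $(S,F)\eqrel^\Pi$ and $(S,G)\eqrel^\Pi$ (same $S$, necessarily) merge under $\Pi'$ precisely when the column-$j$ sign data of $SA_F$ and $SA_G$ agree on every block of $\Pi$ except possibly on $\{(i,j)\}$ and $\pi$, where they differ only in a way that becomes invisible after gluing $(i,j)$ into $\pi$. Working out the bookkeeping (using the fact that within the class we may pass to maximal representatives, where each block of $\Pi$ sees each sign at most once, as in the proof of Proposition~\ref{prop:poset_quotient_is_graded}), the equivalence classes of the induced relation on $\poset_\Pi$ are of exactly two kinds: singletons, or a triple $\{\sigma,\tau,\gamma\}$ where $\gamma=\sigma\wedge\tau$ in $\poset_\Pi$ and both $\sigma,\tau$ cover $\gamma$. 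Concretely, the nontrivial classes arise when the sign in position $(i,j)$ can be ``$+$'', ``$-$'', or (after setting the $(i,j)$ edge to $0$ via (E2), which is legal exactly when $F_j$ is not the singleton $\{i\}$) absent, while $\pi$ already contains the opposite sign in one of the two nonzero cases — giving three distinct $\Pi$-classes that collapse to one $\Pi'$-class. The two ``covering'' elements are the $+$ and $-$ versions; the covered element $\gamma$ is the one with the $(i,j)$-edge deleted, and it is the meet of the other two by Proposition~\ref{prop:augmented_poset_is_lattice} ($S\cap S = S$, and the union of the two forests is the common larger forest). The grading computation of Proposition~\ref{prop:poset_quotient_is_graded} confirms the rank drops by exactly one, consistent with a covering relation. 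When no such opposite sign is present in $\pi$, or when (E1) blocks the deletion, the class is a singleton and nothing is merged.

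The main obstacle I expect is the case analysis verifying that the nontrivial merged classes have \emph{exactly} three elements forming a covering diamond — in particular ruling out classes of size $2$ or size $>3$, and confirming that the two ``top'' elements really cover the ``bottom'' one in $\poset_\Pi$ (not merely lie above it). This is where (E1) and (E2) of the elimination-system axioms do the real work: (E2) guarantees the deleted-edge forest $\gamma$ is still in $\fmixed$, hence a genuine poset element, and (E1) guarantees we cannot delete the last edge in a column, which is what prevents spurious extra members of a class and keeps the ``bottom'' element well-defined. One must also check that passing to the quotient $\poset_\Pi$ (rather than $\poset$ itself) does not create new identifications beyond the expected ones — i.e.\ that the relation ``$\sim_A^{\Pi'}$ modulo $\sim_A^{\Pi}$'' is well-defined and $\poset_\Pi$-homogeneous, which again follows from Proposition~\ref{prop:eq_rel_is_P_homog} applied with $\Pi'$ together with the transitivity of taking successive homogeneous quotients. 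Once the single-step elementary claim is in hand, the theorem follows by induction on the number of merges, invoking Propositions~\ref{prop:eq_rel_is_P_homog} and~\ref{prop:augmented_poset_quotient_is_a_lattice} at each stage for the poset and lattice conditions respectively.
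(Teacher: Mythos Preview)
Your overall inductive strategy is correct and matches the paper's: refine the target partition $\left\{\rows\times\{j\}:j\in\ground\right\}$ down to singletons, and show each one-step coarsening gives an elementary quotient, with the lattice and homogeneity conditions supplied by Propositions~\ref{prop:eq_rel_is_P_homog} and~\ref{prop:augmented_poset_quotient_is_a_lattice}.

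However, your description of the nontrivial triple is wrong, and this hides a genuine gap. For a fixed $S$, the entry $(SA_F)_{i,j}$ equals $S_iA_{i,j}$ if $(i,j)\in F$ and $0$ otherwise; it can take only \emph{two} values, not three, so there are no ``$+$ and $-$ versions'' at position $(i,j)$. The actual three $\ddot{\Pi}$-classes (in the notation where $\pi$ is split into $\{e\}$ and $\pi\smallsetminus\{e\}$) are distinguished by the pair $(\text{sign at }e,\ \text{signs present in }\pi\smallsetminus e)$: the covered element $X_3$ has $e$ present \emph{and} the full sign content on $\pi\smallsetminus e$, while the two covering elements $X_1,X_2$ each drop one of these. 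Your claim that the covered element is obtained by deleting the $(i,j)$-edge reverses the roles. More seriously, you assert that only (E1) and (E2) ``do the real work'' and that the covered element is the meet $F\cup G$. But if you are handed the two covering classes $X_1$ and $X_2$ (represented by forests $F,F'$ from possibly different trees of the subdivision), then $F\cup F'$ need not lie in $\fmixed$, so Proposition~\ref{prop:augmented_poset_is_lattice} does not produce a common lower bound. Constructing a representative for $X_3$ in this case is exactly where axiom (E3) is required: one needs an $H\in\fmixed$ with $H_j=F_j\cup F'_j$ on column $j$ and $H_k\in\{F_k,F'_k,F_k\cup F'_k\}$ elsewhere, so that $SA_H\ \ddot{\sim}\ X_3$. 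Without invoking (E3) your proof cannot establish that the nontrivial $\bar{\sim}_A$-classes have a third (covered) element, and the elementary-quotient claim fails.
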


\begin{proof}

By Proposition \ref{prop:augmented_poset_is_lattice}, $\lattice(\poset)$
is a lattice. Thus, let $\bar{\Pi}$ be a partition of $\rows\times\ground$
which refines the partition $\Pi$ and has at least one part $\pi\in\bar{\Pi}$
such that $\left|\pi\right|\geq2$. Let $e:=(i,j)\in\pi$, and let
$\ddot{\Pi}$ be the refinement of $\bar{\Pi}$ obtained by splitting
the part $\pi$ into two parts: $\{e\}$ and $\pi\smallsetminus\{e\}$.
That is,
\[
\ddot{\Pi}=(\bar{\Pi}\smallsetminus\{\pi\})\cup\{\{e\},\pi\smallsetminus\{e\}\}.
\]
\global\long\def\simddot{\ddot{\sim}}
\global\long\def\simbar{\bar{\sim}}
\global\long\def\simddot{\raisebox{-1pt}{$\ddot{\raisebox{1pt}{$\sim$}}$}} \global\long\def\simbar{\raisebox{-1pt}{$\bar{\raisebox{1pt}{$\sim$}}$}}\global\long\def\eqrelddot{/\simddot}
\global\long\def\eqrelbar{/\simbar}
Let $\simddot$ and $\simbar$ denote the equivalence relations on
sign vectors on $\rows\times\ground$ induced by $\ddot{\Pi}$ and
$\bar{\Pi}$, respectively. These determine $\poset$-homogeneous
equivalence relations $\simddot_{A}$ and $\simbar_{A}$ by Proposition
\ref{prop:eq_rel_is_P_homog}. Let $\ddot{\poset}=\poset\eqrelddot_{A}$.
Since $\simbar_{A}$ is $\poset$-homogeneous, and since $\simddot_{A}$
refines $\simbar_{A}$, we have that $\simbar_{A}$ is $\ddot{\poset}$-homogeneous.
Moreover, there is a natural identification $\ddot{\poset}\eqrelbar_{A}=\poset\eqrelbar_{A}$.
Therefore, by induction, the theorem is proved if we can show that
$\ddot{\poset}\eqrelbar_{A}$ is an elementary quotient whose augmented
poset is a lattice. In fact the lattice assertion follows from Proposition
\ref{prop:augmented_poset_quotient_is_a_lattice}.

Fix $(S,F)\in\poset$. We would like to show that the equivalence
class containing $(S,F)\eqrelddot_{A}$ in $\ddot{\poset}\eqrelbar_{A}$
is either a singleton, or consists of exactly three elements two of
which cover a third. Note that if $SA_{F}\restriction_{\pi}$ is the
zero vector, then this equivalence class is indeed a singleton. This
is because we would immediately know that $(SA_{F})_{e}=0$ and $SA_{F}\restriction_{\pi\smallsetminus e}=\zero$,
hence in this case $(S,F)\eqrelddot_{A}$ is completely determined
by $(S,F)\eqrelbar_{A}$.

Otherwise, the sign vector $SA_{F}\restriction_{\pi}$ is non-zero,
and in this case there are exactly three generalized sign vectors
$X_{1},X_{2},X_{3}\eqrelddot\,\in\{0,-,+,\pm\}^{\ddot{\Pi}}$, depending
on $SA_{F}\restriction_{\pi}$ and $(SA)_{e}$, such that
$X_{1}\ \simbar\ X_{2}\ \simbar\ X_{3}\ \simbar\ SA_{F}.$ The restrictions
of these to $\pi$ are depicted below, in all of four possible cases:

{\renewcommand{\arraystretch}{1.3}

\begin{table}[H]
\centering{}%
\begin{tabular}{r|c|c|c|c|c|c}
\cline{2-4} \cline{6-6} 
$e$ & $0$ & $+$ & $+$ & \multirow{2}{*}{$\rightarrow$} & \multirow{2}{*}{$\pm$} & \multirow{2}{*}{$\pi$}\tabularnewline
\cline{2-4} 
$\stackrel[\phantom{|}]{\phantom{|}}{\pi\smallsetminus e}$ & $\pm$ & $-$ & $\pm$ &  &  & \tabularnewline
\cline{2-4} \cline{6-6} 
\multicolumn{1}{r}{} & \multicolumn{1}{c}{$X_{1}$} & \multicolumn{1}{c}{$X_{2}$} & \multicolumn{1}{c}{$X_{3}$} & \multicolumn{3}{c}{$\ SA_{F}$}\tabularnewline
\end{tabular}$\quad$%
\begin{tabular}{r|c|c|c|c|c|c}
\cline{2-4} \cline{6-6} 
$e$ & $0$ & $-$ & $-$ & \multirow{2}{*}{$\rightarrow$} & \multirow{2}{*}{$\pm$} & \multirow{2}{*}{$\pi$}\tabularnewline
\cline{2-4} 
$\stackrel[\phantom{|}]{\phantom{|}}{\pi\smallsetminus e}$ & $\pm$ & $+$ & $\pm$ &  &  & \tabularnewline
\cline{2-4} \cline{6-6} 
\multicolumn{1}{r}{} & \multicolumn{1}{c}{$X_{1}$} & \multicolumn{1}{c}{$X_{2}$} & \multicolumn{1}{c}{$X_{3}$} & \multicolumn{3}{c}{$\ SA_{F}$}\tabularnewline
\end{tabular}\bigskip{}
\begin{tabular}{r|c|c|c|c|c|c}
\cline{2-4} \cline{6-6} 
$e$ & $0$ & $+$ & $+$ & \multirow{2}{*}{$\rightarrow$} & \multirow{2}{*}{$+$} & \multirow{2}{*}{$\pi$}\tabularnewline
\cline{2-4} 
$\stackrel[\phantom{|}]{\phantom{|}}{\pi\smallsetminus e}$ & $+$ & $0$ & $+$ &  &  & \tabularnewline
\cline{2-4} \cline{6-6} 
\multicolumn{1}{r}{} & \multicolumn{1}{c}{$X_{1}$} & \multicolumn{1}{c}{$X_{2}$} & \multicolumn{1}{c}{$X_{3}$} & \multicolumn{3}{c}{$\ SA_{F}$}\tabularnewline
\end{tabular}$\quad$%
\begin{tabular}{r|c|c|c|c|c|c}
\cline{2-4} \cline{6-6} 
$e$ & $0$ & $-$ & $-$ & \multirow{2}{*}{$\rightarrow$} & \multirow{2}{*}{$-$} & \multirow{2}{*}{$\pi$}\tabularnewline
\cline{2-4} 
$\stackrel[\phantom{|}]{\phantom{|}}{\pi\smallsetminus e}$ & $-$ & $0$ & $-$ &  &  & \tabularnewline
\cline{2-4} \cline{6-6} 
\multicolumn{1}{r}{} & \multicolumn{1}{c}{$X_{1}$} & \multicolumn{1}{c}{$X_{2}$} & \multicolumn{1}{c}{$X_{3}$} & \multicolumn{3}{c}{$\ SA_{F}$}\tabularnewline
\end{tabular}
\end{table}

}

The following argument applies simultaneously to all four cases shown
above. Suppose there are at least two distinct elements $(S,F)\eqrelddot_{A}$
and $(S,F')\eqrelddot_{A}$ in the same equivalence class of $\ddot{\poset}\eqrelbar_{A}$.
Then there exists a unique $i\in\{1,2,3\}$ such that
\[
\{SA_{F}\eqrelddot,\,SA_{F'}\eqrelddot,\,X_{i}\eqrelddot\}=\{X_{1}\eqrelddot,\,X_{2}\eqrelddot,\,X_{3}\eqrelddot\}.
\]
for some $i\in\{1,2,3\}$. We consider the three cases separately. 
\begin{itemize}
\item If $i=1$ or $2$, then without loss of generality assume $SA_{F}\ \simddot\ X_{3}$. 
\begin{itemize}
\item If $i=1$, then by (E2) the set $F''=F\smallsetminus e$ is in $\fmixed$,
and $SA_{F''}\ \simddot\ X_{1}$. 
\item If $i=2$, then by (E2) the set $F''=F\smallsetminus((SA_{F})^{s}\cap\pi)$
is in $\fmixed$, where $s$ is the unique sign appearing in $X_{3}\restriction_{\pi\smallsetminus e}$
but not $X_{2}\restriction_{\pi\smallsetminus e}$, and $SA_{F''}\ \simddot\ X_{2}$.
\end{itemize}
\item If $i=3$, then by (E3), we can find $F''\in\fmixed$ such that
\begin{align*}
SA_{F''}\restriction_{\pi} & =SA_{F\cup F'}\restriction_{\pi}\\
SA_{F''}\restriction_{\tau} & \in\left\{ SA_{F}\restriction_{\tau},\;SA_{F'}\restriction_{\tau},\;SA_{F\cup F'}\restriction_{\tau}\right\} \text{ for all \ensuremath{\tau\in\bar{\Pi}\smallsetminus\pi}.}
\end{align*}
 This shows $SA_{F''}\ \simddot\ X_{3}$. We remark that this is the only time (E3) is used.
\end{itemize}
In all three cases, we therefore have found $(S,F'')\ \simbar_{A}\ (S,F)$
such that $SA_{F''}\ \simddot\ X_{i}$. Therefore the equivalence
class of $(S,F)\eqrelddot_{A}$ in $\ddot{\poset}\eqrelbar_{A}$ consists
of the three distinct elements $(S,F)\eqrelddot_{A}$ , $(S,F')\eqrelddot_{A}$,
and $(S,F'')\eqrelddot_{A}$. Their gradings in $\ddot{\poset}=\poset\eqrelddot$
are given by, by Proposition \ref{prop:poset_quotient_is_graded},
$n-\left|S\right|-\left|X_{i}\eqrelddot\right|$ for $i=1,2,3$. Inspecting
the above four tables, we conclude that two of these elements cover
the third in $\ddot{\poset}$.\end{proof}

%\todo[inline]{YCH: I suggest we elaborate with more non-realizable examples, e.g. Jesus's triangulation}

%\section{Conclusion} \label{sec:conclusion}

%With a new point of view and machinery, we extend the connections between several prominent objects in matroid theory, discrete geometry, and tropical geometry beyond the ``realizable'' territory.
%We also initiate the study of the interaction between matroid subdivisions and signs, or more generally, hyperfields.
%As mentioned throughout in our paper, many of these ideas are interesting in its own right and could potentially be applied to other settings.
%Recall that the original motivation of matching fields comes from combinatorial commutative algebra and the corresponding algebraic geometry, it is also interesting to see if something can be said in these areas.
%We end with a few more open problems, focusing on aspects that were not fully discussed in the main content.

\section{Quotients of Regular Cell Complexes}
\label{sec:quotients_reg_cell_cpxs}

\subsection{Background: Regular Cell Complexes and PL Topology}
\label{sec:basics+cell+complexes}

We quickly review the key aspects of combinatorial
topology we wish to use. The main reference
here is \cite[Section 4.7]{BLSWZ:1993}.

\subsubsection{Regular Cell Complexes}
\begin{definition}

A \emph{regular cell complex} $\Delta$ is a Hausdorff space $\left\Vert \Delta\right\Vert $
together with a finite collection of balls $\Delta$ such that:
\begin{enumerate}
\item The interiors of the balls in $\Delta$ partition the space: $\left\Vert \Delta\right\Vert =\uni_{\sigma\in\Delta}\sigma^{\circ}$.
\item The boundary of any $\sigma\in\Delta$ is a union of members of $\Delta$:
$\bd(\sigma)=\uni_{\tau\subset\sigma}\tau$.
\end{enumerate}
\end{definition}

\begin{definition}

An important special case of the above definition is a \emph{polyhedral
cell complex}. This is a regular cell complex $\Delta$ such that
each $\sigma\in\Delta$ is a polytope in $\RR^{d}$, and for each
$\sigma,\tau\in\Delta$ we have $\sigma\cap\tau$ is a face of both
$\sigma$ and $\tau$.
If every polytope in $\Delta$ is a simplex,
we call $\Delta$ a \emph{geometric simplicial complex}.
A \emph{triangulation} of a set $Q \subset \RR^d$ is a geometric simplicial complex with underlying space $Q$. 

\end{definition}

\begin{definition}

The \emph{face poset} $\poset(\Delta)$ of a regular cell complex
$\Delta$ is the poset whose underlying set is the set of balls $\Delta$,
and whose ordering is given by inclusion.

\end{definition}

\begin{definition}\label{order_complex}

The\emph{ order complex} $\Delta(\poset)$ of a poset $\poset$ is
the simplicial complex whose vertices are the elements of $\poset$
and whose simplices are the chains of $\poset$. We denote by $\norm{\poset}$
the topological space $\norm{\Delta(\poset)}$.

\end{definition}

\begin{proposition}

Every abstract simplicial complex (i.e. set system closed under taking
subsets) can be realized as a geometric simplicial complex in some
Euclidean space.

\end{proposition}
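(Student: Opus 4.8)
The plan is to use the standard canonical realization inside a single high-dimensional simplex. Let $\Delta$ be a finite abstract simplicial complex with vertex set $V=\{v_{1},\dots,v_{N}\}$. I would send each vertex $v_{i}$ to the standard basis vector $\ee_{i}\in\RR^{N}$, and each face $\sigma=\{v_{i_{1}},\dots,v_{i_{k}}\}\in\Delta$ to the geometric simplex $\widehat{\sigma}:=\conv\{\ee_{i_{1}},\dots,\ee_{i_{k}}\}$. Because the vectors $\ee_{1},\dots,\ee_{N}$ are affinely independent, each $\widehat{\sigma}$ is a genuine $(k-1)$-simplex, and all the $\widehat{\sigma}$ are faces of the standard simplex $\conv\{\ee_{1},\dots,\ee_{N}\}$. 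Setting $\widehat{\Delta}:=\{\widehat{\sigma}:\sigma\in\Delta\}$ with underlying space $\norm{\widehat{\Delta}}:=\bigcup_{\sigma\in\Delta}\widehat{\sigma}\subseteq\RR^{N}$, which is closed and bounded and hence compact Hausdorff, it remains to verify the two axioms of a regular cell complex together with the extra intersection condition for a polyhedral cell complex.

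The single geometric fact doing all the work is the identity $\conv(A)\cap\conv(B)=\conv(A\cap B)$ valid for arbitrary subsets $A,B$ of an affinely independent point set. Granting this, for $\sigma,\tau\in\Delta$ one obtains $\widehat{\sigma}\cap\widehat{\tau}=\widehat{\sigma\cap\tau}$; since $\Delta$ is closed under taking subsets, $\sigma\cap\tau\in\Delta$, and $\widehat{\sigma\cap\tau}$ is a common face of $\widehat{\sigma}$ and $\widehat{\tau}$, which is exactly the polyhedral intersection condition. The remaining axioms then follow formally: the relative interiors of the faces of a simplex partition it, so each point of $\norm{\widehat{\Delta}}$ lies in the relative interior of the unique minimal $\widehat{\sigma}$ containing it, and if two relative interiors $\widehat{\sigma}^{\circ},\widehat{\tau}^{\circ}$ met, then $\widehat{\sigma}\cap\widehat{\tau}=\widehat{\sigma\cap\tau}$ being a face of both would force $\sigma=\tau$; hence the relative interiors partition the space. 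Likewise $\bd(\widehat{\sigma})$ is the union of the proper faces of the simplex $\widehat{\sigma}$, each of the form $\widehat{\tau}$ with $\tau\subsetneq\sigma$, and every such $\tau$ lies in $\Delta$. Thus $\widehat{\Delta}$ is a geometric simplicial complex, and the correspondence $\sigma\mapsto\widehat{\sigma}$ is a face-poset isomorphism onto $\poset(\widehat{\Delta})$, so $\widehat{\Delta}$ realizes $\Delta$.

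The only nontrivial step, and the one place affine independence is genuinely used, is the identity $\conv(A)\cap\conv(B)=\conv(A\cap B)$. The inclusion $\supseteq$ is immediate; for $\subseteq$, a point $x$ on the left admits a convex representation supported on $A$ and another supported on $B$, and subtracting the two yields an affine dependence among $\ee_{1},\dots,\ee_{N}$, which by affine independence forces every coefficient indexed outside $A\cap B$ to vanish, so $x\in\conv(A\cap B)$. I expect this to be essentially the only obstacle; everything else is a matter of unwinding the definitions of regular, polyhedral, and simplicial cell complexes. (If one insisted on allowing infinite complexes, one would replace $\RR^{N}$ by $\RR^{V}$ equipped with the weak topology and run the same argument, but since every cell complex in this paper is finite, the construction above suffices.)
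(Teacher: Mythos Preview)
Your proof is correct and is precisely the standard canonical realization argument. The paper itself does not supply a proof for this proposition; it is stated as a well-known background fact in Section~\ref{sec:basics+cell+complexes} and left unproved. So there is nothing to compare against, and your write-up would serve perfectly well as the omitted justification.
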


\subsubsection{PL Balls and Spheres}

\begin{definition}

Given $P\subset\RR^{k}$, $Q\subset\RR^{\ell}$, a map $f:P\rightarrow Q$
is \emph{piecewise linear} (PL) if there is a triangulation $\Delta$
of $P$ into simplices such that $f$ restricted to each simplex of
$\Delta$ is an affine function. That is, if $\sigma=\conv(v_{0},\ldots,v_{k})\in\Delta$
then $f|_{\sigma}$ satisfies 
\[
f(\lambda_{0}v_{0}+\lambda_{1}v_{1}+\cdots+\lambda_{k}v_{k})=\lambda_{0}f(v_{0})+\lambda_{1}f(v_{1})+\cdots+\lambda_{k}f(v_{k})
\]
for all convex combinations $\sum_{i}\lambda_{i}v_{i}$ of the vertices
$v_{0},\ldots,v_{k}$ of $\sigma$. We call a PL map that is also
a homeomorphism a \emph{PL homeomorphism}.

\end{definition}

\begin{definition}

Let $P\subset\RR^{k}$ be the underlying space of a polyhedral cell complex. Then $P$ is a \emph{PL $d$-sphere} (resp.
\emph{PL $d$-ball}) if there is a PL homeomorphism from $P$ to the
boundary of the standard $d$-simplex (resp. to the standard $d$-simplex).

\end{definition}

\begin{proposition}\label{prop:PL_ball_facts}$\;$
\begin{enumerate}
\item \cite[Theorem~4.7.21(i)]{BLSWZ:1993} The union of two PL d-balls,
whose intersection is a PL $(d-1)$-ball lying in the boundary of
each, is a PL $d$-ball.
\item \cite[Theorem~4.7.21(ii)]{BLSWZ:1993} The union of two PL d-balls,
which intersect along their entire boundaries, is a PL $d$-sphere.
\item \cite[Theorem~4.7.21(iii)]{BLSWZ:1993} (Newman's Theorem) The closure
of the complement of a PL $d$-ball embedded in a PL $d$-sphere is
itself a PL $d$-ball.
\end{enumerate}
\end{proposition}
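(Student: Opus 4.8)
The plan is essentially to cite: all three assertions are classical facts of piecewise-linear topology, and the statement as displayed already attributes each of them to \cite[Theorem~4.7.21]{BLSWZ:1993}, which in turn rests on the standard PL-topology literature. So the shortest honest ``proof'' is a pointer to that reference. It is worth recording, though, what mathematical content one would invoke if one wanted a more self-contained account, since these are the three gluing/excision lemmas that the cell-merging argument of Section~\ref{sec:quotients_reg_cell_cpxs} will repeatedly use.

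For part (1) the key input is the collar/relative-regular-neighborhood package: if $C$ is a PL $(d-1)$-ball contained in the boundary sphere of a PL $d$-ball $B$, then the pair $(B,C)$ is PL homeomorphic to $(\ssimplex_{d-1}\times[0,1],\,\ssimplex_{d-1}\times\{0\})$. Consequently, attaching such a $B_2$ to another PL $d$-ball $B_1$ along a face-ball $C\subseteq\bd(B_1)$ amounts to gluing an external collar onto $B_1$ over part of its boundary, an operation that does not change the PL homeomorphism type; hence $B_1\cup B_2$ is again a PL $d$-ball. For part (2) one uses that a PL $d$-ball is PL homeomorphic to the cone $c\ast S^{d-1}$ over a PL $(d-1)$-sphere, so that gluing two PL $d$-balls along the entirety of their (necessarily PL-sphere) boundaries produces a suspension of a PL $(d-1)$-sphere, which is a PL $d$-sphere; this is the PL Schoenflies-type phenomenon, valid in every dimension in the PL category. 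Part (3), Newman's theorem, admits no shortcut: its proof genuinely requires regular neighborhood theory together with the uniqueness of regular neighborhoods, and one simply quotes it.

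The point to flag is that, in contrast to the combinatorial and poset-theoretic results proved elsewhere in this paper, these PL facts are imported wholesale rather than re-derived; the only real ``work'' is to make sure that when they are applied later --- to two cells of a regular cell complex meeting along a common face, or to a subcomplex sitting inside a PL-sphere realization --- the hypotheses (intersection is a PL ball in the boundary of each, boundaries coincide, embedded PL ball in a PL sphere) are actually met. That verification belongs at the point of use and not here, so no further argument is needed for the statement itself.
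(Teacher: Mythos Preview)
Your proposal is correct and matches the paper's approach: the paper gives no proof for this proposition at all, simply citing \cite[Theorem~4.7.21]{BLSWZ:1993} for each part. Your additional expository remarks on the PL-topology content (collars, cones, regular neighborhoods) are accurate but unnecessary for the paper's purposes, since these results are imported as black boxes.
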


\begin{lemma}\label{prop:union of PL balls is PL ball} Let $\sigma,\tau$
be two PL $d$-balls, such that $\sigma\cap\tau$ is a PL $(d-1)$-ball
contained in the boundaries of both $\sigma$ and $\tau$.
Then the interior of $\sigma\cup\tau$ is equal to $\sigma^{\circ}\cup\tau^{\circ}\cup\left(\sigma\cap\tau\right)^{\circ}$.

\end{lemma}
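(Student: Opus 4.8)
The plan is to work entirely in $\RR^N$ for $N$ large enough to contain $\sigma$ and $\tau$, and to compute the interior of $\sigma\cup\tau$ as a subset of the PL $d$-ball $\sigma\cup\tau$ itself (which is a PL $d$-ball by Proposition~\ref{prop:PL_ball_facts}(1)). Write $\rho := \sigma\cap\tau$, a PL $(d-1)$-ball lying in $\bd(\sigma)\cap\bd(\tau)$. The three pieces $\sigma^{\circ}$, $\tau^{\circ}$, $\rho^{\circ}$ are pairwise disjoint, so the real content is the two inclusions $\sigma^{\circ}\cup\tau^{\circ}\cup\rho^{\circ}\subseteq(\sigma\cup\tau)^{\circ}$ and $(\sigma\cup\tau)^{\circ}\subseteq\sigma^{\circ}\cup\tau^{\circ}\cup\rho^{\circ}$, where ``interior'' means interior relative to the space $\|\sigma\cup\tau\|$, equivalently the set of points having a neighbourhood in $\sigma\cup\tau$ that is a PL $d$-ball with that point in its relative interior. (Since $\sigma\cup\tau$ is a PL $d$-manifold with boundary, its manifold interior and its topological interior in this sense coincide; I would state this as the working definition up front, citing the PL-manifold structure.)

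First I would dispatch the inclusion $(\sigma\cup\tau)^{\circ}\subseteq\sigma^{\circ}\cup\tau^{\circ}\cup\rho^{\circ}$ by contraposition: a point $x\in\sigma\cup\tau$ not in that union lies in $\bd(\sigma)\cup\bd(\tau)$ but not in $\rho^{\circ}$. If $x\in\bd(\sigma)\setminus\rho$ then a small enough neighbourhood of $x$ in $\sigma\cup\tau$ equals a neighbourhood of $x$ in $\sigma$ alone (because $\tau$ meets $\sigma$ only along $\rho$, which is closed and avoids $x$), so $x$ is a boundary point of the $d$-ball $\sigma$ and hence not interior in $\sigma\cup\tau$; symmetrically for $\bd(\tau)\setminus\rho$. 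The only remaining case is $x\in\bd(\rho)$ (the relative boundary of the $(d-1)$-ball $\rho$), where $x\in\bd(\sigma)\cap\bd(\tau)$ and $x$ lies on the ``seam''; here I would argue that a neighbourhood of $x$ in $\sigma\cup\tau$ is PL homeomorphic to a neighbourhood of a boundary point of a $d$-ball obtained by gluing two half-disks along part of their boundary, which is still a half-disk, so again $x$ is not interior.

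For the reverse inclusion $\sigma^{\circ}\cup\tau^{\circ}\cup\rho^{\circ}\subseteq(\sigma\cup\tau)^{\circ}$, the point $x\in\sigma^{\circ}$ has a neighbourhood inside $\sigma^{\circ}\subseteq(\sigma\cup\tau)^{\circ}$ trivially, and likewise for $x\in\tau^{\circ}$. The one genuinely nontrivial point is $x\in\rho^{\circ}$: here I would use the fact that $\rho$ is a PL $(d-1)$-ball in the boundary of each of $\sigma$ and $\tau$, so by a standard PL collaring / regular-neighbourhood argument (available since everything is PL) a neighbourhood of $x$ in $\sigma$ looks like $\rho^{\circ}\times[0,1)$ and a neighbourhood in $\tau$ looks like $\rho^{\circ}\times[0,1)$, and these glue along $\rho^{\circ}\times\{0\}$ to give $\rho^{\circ}\times(-1,1)$, a neighbourhood of $x$ homeomorphic to an open $d$-ball; hence $x\in(\sigma\cup\tau)^{\circ}$.

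The main obstacle I expect is exactly this local-collaring step at points of $\rho^{\circ}$: making precise that, in the PL category, two $d$-balls glued along a boundary $(d-1)$-ball look locally like $\RR^{d-1}\times(-1,1)$ near the relative interior of the seam. I would handle it by invoking the PL results already cited (Proposition~\ref{prop:PL_ball_facts}, in particular that $\sigma\cup\tau$ is a PL $d$-ball) together with the standard fact that a PL $d$-ball is a PL $d$-manifold with boundary whose boundary is $\bd(\sigma\cup\tau)=(\bd\sigma\setminus\rho^{\circ})\cup(\bd\tau\setminus\rho^{\circ})$; then the three sets $\sigma^{\circ},\tau^{\circ},\rho^{\circ}$ are precisely $\|\sigma\cup\tau\|\setminus\bd(\sigma\cup\tau)$, which is by definition the interior. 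In fact, once one knows $\sigma\cup\tau$ is a PL $d$-ball and identifies its boundary sphere as the union of the ``free'' parts of $\bd\sigma$ and $\bd\tau$, the lemma follows immediately by complementation, so I would try to organize the write-up around computing $\bd(\sigma\cup\tau)$ rather than the interior directly.
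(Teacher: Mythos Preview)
Your overall strategy is sound and your final remark---compute $\bd(\sigma\cup\tau)$ and take the complement---is exactly what the paper does. Where you diverge is in \emph{how} that boundary is identified. You handle the seam by local PL collaring: near $x\in\rho^{\circ}$ you model $\sigma$ and $\tau$ as $\rho^{\circ}\times[0,1)$ and glue, and near $x\in\bd(\rho)$ you assert a half-disk picture. These steps are correct in the PL category but rely on local flatness/collaring statements that are not among the paper's cited facts (Proposition~\ref{prop:PL_ball_facts}). The paper sidesteps all local analysis at the seam: it first proves, by an elementary open-ball argument, that $(\sigma\cup\tau)^{\circ}\setminus\tau\subseteq\sigma^{\circ}$ (equivalent to your case ``$x\in\bd(\sigma)\setminus\rho$''), deduces $U:=\bd\sigma\setminus\rho\subseteq\bd(\sigma\cup\tau)$ and, by taking closures, $W:=\bd\rho\subseteq\bd(\sigma\cup\tau)$; then it applies Newman's theorem to see that $U\cup W$ and $U'\cup W$ (with $U':=\bd\tau\setminus\rho$) are PL $(d-1)$-balls with common boundary $W$, so their union is a PL $(d-1)$-sphere sitting inside the $(d-1)$-sphere $\bd(\sigma\cup\tau)$, whence equality by invariance of domain. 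The payoff of the paper's route is that it uses only the three PL facts already listed in Proposition~\ref{prop:PL_ball_facts} plus invariance of domain, and never needs to analyse neighbourhoods of points on $\rho^{\circ}$ or $\bd\rho$ directly; your route is more geometric and perhaps more intuitive, but you would need to import a PL collaring theorem to make the $\rho^{\circ}$ and $\bd\rho$ cases rigorous.
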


\begin{proof} By Proposition~\ref{prop:PL_ball_facts} (1), $\sigma\cup\tau$
is a PL $d$-ball. We start by showing that $\sigma^{\circ}$ contains
$(\sigma\cup\tau)^{\circ}\backslash\tau$. Let $x\in(\sigma\cup\tau)^{\circ}\backslash\tau$.
Then there is an open set $\mathcal{U}\subset(\sigma\cup\tau)^{\circ}$
containing $x$ and a homeomorphism $\varphi:\mathcal{U}\rightarrow B_{d}^{\circ}\subset\RR^{d}$
sending $x$ to $\zero$. Here $B_{d}$ denotes the ball of radius
1 in $\RR^{d}$ centred at the origin. Since $\tau$ is closed in
$\sigma\cup\tau$, and since $x\notin\tau$, we further have that
$\varphi(\mathcal{U}\backslash\tau)$ is an open set which contains
the origin; hence there exists $\delta>0$ such that the scaled open
ball $\delta\cdot B_{d}^{\circ}$ is contained in $\varphi(\mathcal{U}\backslash\tau)=\varphi(\mathcal{U})\backslash\varphi(\tau)$.
It follows that $\varphi^{-1}(\delta\cdot B_{d}^{\circ})$ is an open
neighbourhood of $x$, homeomorphic to $B_{d}^{\circ}$, and entirely
contained in $\sigma$. In particular, this means that $x\in\sigma^{\circ}$.
We conclude $\sigma^{\circ}\supseteq(\sigma\cup\tau)^{\circ}\backslash\tau$.

From this containment we immediately get
\[
\partial\sigma\subseteq\sigma\backslash((\sigma\cup\tau)^{\circ}\backslash\tau)=(\sigma\cap\tau)\cup(\sigma\cap\partial(\sigma\cup\tau)),
\]
and in particular
\[
U:=\partial\sigma\backslash(\sigma\cap\tau)\subseteq\partial(\sigma\cup\tau).
\]
Since boundaries are closed, $V:=\partial\left(\sigma\cup\tau\right)\cap\partial\sigma$
is closed inside $\partial\sigma$. Now $W:=\partial\left(\sigma\cap\tau\right)$
is the boundary of $U$ in $\partial\sigma$, thus it is contained
in $\overline{U}\subset V\subset\partial\left(\sigma\cup\tau\right)$.
Similarly, $U':=\partial\tau\setminus\left(\sigma\cap\tau\right)\subset\partial\left(\sigma\cup\tau\right)$.
By Proposition~\ref{prop:PL_ball_facts} (3), both $U\cup W,U'\cup W$
are PL $(d-1)$-balls with common boundary $W$, so by Proposition~\ref{prop:PL_ball_facts}
(2), $U\cup W\cup U'$ is a PL $(d-1)$-sphere contained in $\partial\left(\sigma\cup\tau\right)$.
Invariance of Domain implies the containment is an equality, see for
example {\cite[Corollary 2B.4]{Hatcher:2002}}. After taking the complement with respect to $\sigma\cup\tau$, this equality yields an expression for $(\sigma\cup\tau)^{\circ}$ which simplifies to $\sigma^{\circ}\cup\tau^{\circ}\cup\left(\sigma\cap\tau\right)^{\circ}$.\end{proof} 

\subsubsection{Regular Cell Complexes that are PL Spheres}

\begin{definition}We say that a regular cell complex $\Delta$ with
face poset $\poset$ is a \emph{PL sphere} if some realization of
the order complex $\Delta(\poset)$ in some Euclidean space is a PL
sphere.

\end{definition}

\begin{proposition}[{\cite[Proposition 4.7.26(iii)]{BLSWZ:1993}}]

\label{prop:pl_sphere_cells_are_pl_balls}Let $\Delta$ be a regular
cell complex that is a PL sphere. Then every $\sigma\in\Delta$ is
a PL ball.

\end{proposition}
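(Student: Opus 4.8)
Since this proposition is quoted from \cite{BLSWZ:1993}, I will only indicate how I would reconstruct it; the plan is to reduce it to two standard facts of PL topology and otherwise merely unwind order-complex descriptions in terms of the face poset $\poset:=\poset(\Delta)$. By hypothesis $\Delta(\poset)$ is a PL sphere. Fix a cell $\sigma\in\Delta$. The faces of $\sigma$ form a subcomplex of $\Delta$ that is itself a regular cell complex, with underlying space the closed cell $\sigma$ and face poset the principal ideal $\poset_{\leq\sigma}$, so the assertion that $\sigma$ is a PL ball unwinds to: $\Delta(\poset_{\leq\sigma})$ is a PL ball. Regarding $\sigma$ also as a vertex of $\Delta(\poset)$, the poset structure gives immediately that $\Delta(\poset_{\leq\sigma})=\sigma*\Delta(\poset_{<\sigma})$, the cone with apex $\sigma$ over the order complex of $\poset_{<\sigma}$ (a chain of $\poset_{\leq\sigma}$ either lies in $\poset_{<\sigma}$, or is such a chain together with $\sigma$ as its top element). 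Since the cone on a PL sphere is a PL ball, it is then enough to show $\Delta(\poset_{<\sigma})$ is a PL sphere; the case of a $0$-cell $\sigma$, where $\poset_{<\sigma}=\emptyset$ and $\Delta(\poset_{\leq\sigma})$ is a point, is trivial and I would dispose of it first.

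For the remaining step I would exhibit $\Delta(\poset_{<\sigma})$ as a join factor of a vertex link. First I would verify that $\link(\sigma,\Delta(\poset))=\Delta(\poset_{<\sigma})*\Delta(\poset_{>\sigma})$: a simplex of this link is a chain of $\poset$ avoiding $\sigma$ whose union with $\{\sigma\}$ is again a chain, hence consists entirely of elements comparable to $\sigma$, so it splits uniquely into a chain in $\poset_{<\sigma}$ and a chain in $\poset_{>\sigma}$, and conversely any such pair of chains has comparable union. Next, $\Delta(\poset)$ is PL homeomorphic to the boundary of a simplex, every vertex link of which is the boundary of a smaller simplex; since links are preserved up to PL homeomorphism under subdivision, every vertex link of $\Delta(\poset)$ is a PL sphere. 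Hence $\Delta(\poset_{<\sigma})*\Delta(\poset_{>\sigma})$ is a PL sphere, and by the standard fact that a join of simplicial complexes can be a PL sphere only when each factor is, $\Delta(\poset_{<\sigma})$ is a PL sphere (when $\sigma$ is a maximal cell, $\poset_{>\sigma}=\emptyset$ and this join is $\Delta(\poset_{<\sigma})$ itself, so there is nothing to peel off). Feeding this back into the first paragraph shows $\Delta(\poset_{\leq\sigma})$ is a PL ball, as required.

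The hard part is not the combinatorics, which is just bookkeeping with cones, links, and chains in the face poset, but the imported PL-topology input: the invariance of links under subdivision and PL homeomorphism, which is exactly what makes the statement ``link of a vertex of a PL sphere is a PL sphere'' both well posed and true --- and is the point at which pathologies such as non-combinatorial triangulations are ruled out, since we start from an honest PL sphere --- together with the characterization of when a join is a PL sphere. I would cite both from \cite[\S 4.7]{BLSWZ:1993}; the nontrivial direction of the join statement can in any case be recovered by applying the link computation above to a vertex of one of its two factors.
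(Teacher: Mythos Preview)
The paper does not give its own proof of this proposition; it is quoted directly from \cite[Proposition~4.7.26(iii)]{BLSWZ:1993}. Your reconstruction is correct and is essentially the standard argument found there: identify the closed cell $\sigma$ with the order complex $\Delta(\poset_{\le\sigma})$, write this as the cone $\sigma*\Delta(\poset_{<\sigma})$, and show $\Delta(\poset_{<\sigma})$ is a PL sphere by exhibiting it as a join factor of the link of $\sigma$ in the PL sphere $\Delta(\poset)$.
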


An important fact about PL spheres is that they admit a dual cell
structure:

\begin{proposition}[{\cite[Proposition 4.7.26(iv)]{BLSWZ:1993}}]

\label{prop:dual cell cpx}Let $\Delta$ be a regular cell complex
that is a PL sphere. Then there exists a regular cell complex $\Delta^{\dual}$,
also a PL sphere, such that $\left\Vert \Delta\right\Vert =\left\Vert \Delta^{\dual}\right\Vert $
and $\poset(\Delta^{\dual})\simeq\poset(\Delta)^{\dual}$.

\end{proposition}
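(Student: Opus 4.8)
Let $d=\dim\Delta$ and $\poset:=\poset(\Delta)$, and fix a geometric realization of the order complex $\Delta(\poset)$ that is a PL $d$-sphere; such a realization exists since $\Delta$ is a PL sphere. The plan is to build $\Delta^{\dual}$ inside this single PL $d$-sphere. I identify $\norm{\Delta}$ with $\norm{\Delta(\poset)}$ via the canonical homeomorphism sending the open cell $\sigma^{\circ}$ onto the union of the open simplices of $\Delta(\poset)$ indexed by chains of $\poset$ with minimum $\sigma$. Write $\hat\sigma$ for the vertex of $\Delta(\poset)$ coming from $\sigma$, put $\poset_{>\sigma}:=\{\tau\in\poset:\tau>\sigma\}$ and $\poset_{<\sigma}:=\{\tau\in\poset:\tau<\sigma\}$, and for a chain $C$ of $\poset$ let $\overline{C}$ denote the closed simplex of $\Delta(\poset)$ on the vertices $\hat\tau$, $\tau\in C$. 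I would define the \emph{dual block}
\[
D(\sigma)\ :=\ \bigcup\bigl\{\,\overline{C}\ :\ C\text{ a chain of }\poset,\ \min C\geq\sigma\,\bigr\}\ \subseteq\ \norm{\Delta(\poset)},
\]
and set $\Delta^{\dual}:=\{D(\sigma):\sigma\in\poset\}$. A first, purely combinatorial, step records three facts: $D(\sigma)=\{\hat\sigma\}*\Delta(\poset_{>\sigma})$, the cone with apex $\hat\sigma$ over the order complex of $\poset_{>\sigma}$; the subset $D(\sigma)^{\circ}$ is precisely the union of the open simplices indexed by chains whose minimum equals $\sigma$; and $D(\tau)\subseteq D(\sigma)$ if and only if $\tau\geq\sigma$ in $\poset$.

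The topological heart of the matter, and the step I expect to be the main obstacle, is to show that $\Delta(\poset_{>\sigma})$ is a PL $(d-1-\dim\sigma)$-sphere; granting this, $D(\sigma)=\{\hat\sigma\}*\Delta(\poset_{>\sigma})$ is a cone over a PL sphere, hence a PL $(d-\dim\sigma)$-ball with boundary $\Delta(\poset_{>\sigma})$. For the claim I would invoke two standard facts of PL topology not proved in the excerpt: (i) the link of a vertex in a PL $d$-sphere is a PL $(d-1)$-sphere; and (ii) its consequence that if $A*B$ is a PL sphere and $A$ is a PL sphere then $B$ is a PL sphere of complementary dimension, obtained from (i) by induction on $\dim A$ by passing to the link of a vertex of $A$ (base case $A=\emptyset$, where $A*B=B$). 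In $\Delta(\poset)$ one has $\link(\hat\sigma)=\Delta(\poset_{<\sigma})*\Delta(\poset_{>\sigma})$, which is a PL $(d-1)$-sphere by (i); and $\Delta(\poset_{<\sigma})$ is the barycentric subdivision of $\bd \sigma$, a PL $(\dim\sigma-1)$-sphere since $\sigma$ is a PL ball by Proposition~\ref{prop:pl_sphere_cells_are_pl_balls}. Fact (ii) then forces $\Delta(\poset_{>\sigma})$ to be a PL $(d-1-\dim\sigma)$-sphere. The degenerate cases should be checked for consistency with the convention $S^{-1}=\emptyset$: when $\sigma$ is a top cell, $\poset_{>\sigma}=\emptyset$ and $D(\sigma)$ is a point; when $\sigma$ is a vertex, $\poset_{<\sigma}=\emptyset$ and $D(\sigma)$ is a $d$-ball.

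It remains to verify the two axioms of a regular cell complex for $\Delta^{\dual}$ and to identify its face poset. For the partition axiom: the open simplices of $\Delta(\poset)$ partition $\norm{\Delta}$, and each lies in $D(\sigma)^{\circ}$ for the unique $\sigma$ equal to the minimum of its indexing chain, so the sets $D(\sigma)^{\circ}$ partition $\norm{\Delta}$, whence $\norm{\Delta^{\dual}}=\norm{\Delta}$. For the boundary axiom: a point of $\Delta(\poset_{>\sigma})$ lies in a closed simplex $\overline{C}$ with $C\subseteq\poset_{>\sigma}$, hence in $D(\min C)$ with $\min C>\sigma$, and conversely $D(\tau)\subseteq\bd D(\sigma)$ whenever $\tau>\sigma$; thus $\bd D(\sigma)=\Delta(\poset_{>\sigma})=\bigcup_{\tau>\sigma}D(\tau)$. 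As $\Delta^{\dual}$ is Hausdorff, being a subspace of $\norm{\Delta(\poset)}$, it is a regular cell complex, and $D(\sigma)\mapsto\sigma$ is an order-reversing bijection, so $\poset(\Delta^{\dual})\simeq\poset(\Delta)^{\dual}$. Finally $\Delta^{\dual}$ is itself a PL sphere: its face poset $\poset^{\dual}$ has order complex equal, as an abstract simplicial complex, to $\Delta(\poset)$ (reverse every chain), which realizes as a PL $d$-sphere by hypothesis.
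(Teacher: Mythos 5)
Your proof is correct and is the standard dual-block argument; the paper itself gives no proof of this statement, citing it directly from \cite[Proposition~4.7.26(iv)]{BLSWZ:1993}, whose proof (and the Hudson-style construction via first derived subdivisions recorded immediately afterwards in the paper) proceeds by exactly the same decomposition $D(\sigma)=\{\hat\sigma\}*\Delta(\poset_{>\sigma})$ that you use. The two external PL facts you invoke (links of vertices in PL spheres are PL spheres, and the cancellation property for joins of spheres) are indeed the standard inputs, so there is no gap beyond what one is entitled to quote.
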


Here $\poset^{\dual}$ denotes the dual poset of $\poset$. In the
special case when $\Delta$ is a polyhedral cell complex, there is
a non-canonical way to construct this $\Delta^{\dual}$:

\begin{definition}

Let $\Delta$ be a polyhedral cell complex. A \emph{first derived
subdivision} $\Delta^{1}$ is a subdivision of $\Delta$ obtained
as follows: choose a point $x_{\sigma}$ in the relative interior
of each $\sigma\in\Delta$. Then, $\Delta^{1}$ is given by
\[
\Delta^{1}:=\left\{ \conv(x_{\sigma_{1}},\ldots,x_{\sigma_{k}}):\sigma_{1}\subsetneq\sigma_{2}\subsetneq\cdots\subsetneq\sigma_{k},\;\text{each \ensuremath{\sigma_{i}\in\Delta}}\right\} .
\]

\end{definition}

\begin{theorem}[{\cite[\S~1.6]{Hudson:1967}}]

If $\Delta$ is a polyhedral cell complex then $\Delta^{\dual}$ may
be constructed as follows: Choose a first derived subdivision $\Delta^{1}$
of $\Delta$. For each cell $\sigma\in\Delta$, define
\[
\sigma^{\dual}:=\bigcap_{\text{\ensuremath{v} vertex of \ensuremath{\sigma}}}\left\Vert \overline{\str}(v;\Delta^{1})\right\Vert 
\]
where $\overline{\str}(\sigma;\Delta):=\left\{ \tau\in\Delta:\text{\ensuremath{\tau} is contained in a cell containing \ensuremath{\sigma}}\right\} $.
Then let 
\[
\Delta^{\dual}:=\left\{ \sigma^{\dual}:\sigma\in\Delta\right\} .
\]

\end{theorem}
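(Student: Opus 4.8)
The plan is to reduce the statement to a combinatorial fact about the barycentric subdivision $\Delta^{1}$, isolating the single topological ingredient. Throughout, $\Delta$ is a polyhedral cell complex which — as in Proposition~\ref{prop:dual cell cpx}, whose $\Delta^{\dual}$ we are realizing — is a PL $d$-sphere; the general PL-manifold case only requires the usual modifications near a boundary. Write $\poset:=\poset(\Delta)$, and recall that $\Delta^{1}$ is the geometric realization of the order complex $\Delta(\poset)$ sitting inside $\norm{\Delta}$: its simplices are $\conv(x_{\tau_{1}},\dots,x_{\tau_{m}})$ for chains $\tau_{1}\subsetneq\cdots\subsetneq\tau_{m}$ of $\poset$, and $\norm{\Delta^{1}}=\norm{\Delta}$.

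The first and central step is to show that
\[
\sigma^{\dual}=\norm{\Delta(\poset_{\geq\sigma})}=x_{\sigma}\ast\norm{\Delta(\poset_{>\sigma})},
\]
where $\poset_{\geq\sigma}=\{\tau\in\poset:\tau\geq\sigma\}$, etc.; the last expression is a simplicial cone because $\sigma$ is the least element of $\poset_{\geq\sigma}$. To prove this I would fix a vertex $v$ of $\sigma$, note that $v$ is a minimal element of $\poset$ with $x_{v}=v$, and observe that a simplex $\conv(x_{\rho_{1}},\dots,x_{\rho_{\ell}})$ of $\Delta^{1}$ lies in $\overline{\str}(v;\Delta^{1})$ precisely when $\{v\}\cup\{\rho_{1},\dots,\rho_{\ell}\}$ is a chain, i.e.\ (by minimality of $v$) $v\leq\rho_{1}$, i.e.\ $v$ is a vertex of $\rho_{1}$. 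Passing to carriers, a point $p$ of $\norm{\Delta^{1}}$ whose carrier corresponds to the chain $\rho_{1}\subsetneq\cdots\subsetneq\rho_{\ell}$ lies in $\norm{\overline{\str}(v;\Delta^{1})}$ iff $v$ is a vertex of $\rho_{1}$; intersecting over all vertices $v$ of $\sigma$, $p\in\sigma^{\dual}$ iff every vertex of $\sigma$ is a vertex of $\rho_{1}$. Since $\Delta$ is a \emph{polyhedral} cell complex, a cell $\rho_{1}$ containing all vertices of the cell $\sigma$ has $\sigma$ as a face; hence this is exactly $\sigma\leq\rho_{1}$, and the displayed identity follows.

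Next, the one topological input: each $\sigma^{\dual}$ is a PL ball. Here $\link(x_{\sigma};\Delta^{1})$ is a PL $(d-1)$-sphere (a vertex link in a PL $d$-sphere), and the chain description identifies it with the join $\norm{\Delta(\poset_{<\sigma})}\ast\norm{\Delta(\poset_{>\sigma})}$, the first factor being a barycentric subdivision of the boundary complex of the polytope $\sigma$, hence a PL $(\dim\sigma-1)$-sphere. By the join-cancellation property of PL spheres, the second factor $\norm{\Delta(\poset_{>\sigma})}$ is then a PL $(d-\dim\sigma-1)$-sphere, so $\sigma^{\dual}=x_{\sigma}\ast\norm{\Delta(\poset_{>\sigma})}$ is a PL $(d-\dim\sigma)$-ball whose relative interior is the open cone and whose boundary is $\norm{\Delta(\poset_{>\sigma})}=\bigcup_{\tau>\sigma}\tau^{\dual}$. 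I expect this join-cancellation argument to be the only real obstacle; everything else is bookkeeping with carriers.

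It then remains to check the two regular-cell-complex axioms and compute the face poset, which follow formally. Each simplex of $\Delta^{1}$ lies in $\tau_{1}^{\dual}$, where $\tau_{1}$ is the least element of its chain, so $\bigcup_{\sigma}\sigma^{\dual}=\norm{\Delta^{1}}=\norm{\Delta}$. The relative interior $(\sigma^{\dual})^{\circ}$ — the open cone — is the union of the relative interiors of exactly those simplices of $\Delta^{1}$ whose chain has least element $\sigma$; as every point of $\norm{\Delta^{1}}$ has a unique carrier and every chain a unique minimum, the sets $(\sigma^{\dual})^{\circ}$ partition $\norm{\Delta}$ (axiom~(1)), while $\bd(\sigma^{\dual})=\norm{\Delta(\poset_{>\sigma})}=\bigcup_{\tau>\sigma}\tau^{\dual}$ is a union of cells of $\Delta^{\dual}$ (axiom~(2)). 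Finally, $\tau^{\dual}\subseteq\sigma^{\dual}$ iff $\poset_{\geq\tau}\subseteq\poset_{\geq\sigma}$ iff $\tau\geq\sigma$ in $\poset$, so $\sigma\mapsto\sigma^{\dual}$ is an inclusion-reversing bijection and $\poset(\Delta^{\dual})\cong\poset^{\dual}$; since the order complexes of $\poset^{\dual}$ and $\poset$ coincide, $\Delta^{\dual}$ is again a PL sphere, which completes the identification with the dual cell complex of Proposition~\ref{prop:dual cell cpx}.
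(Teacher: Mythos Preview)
Your proof is correct. The paper itself does not supply a proof here --- the result is simply cited from Hudson, \S 1.6 --- so there is no in-paper argument to compare against. Your approach (identifying $\sigma^{\dual}$ with the order complex of the upper set $\poset_{\geq\sigma}$ via the carrier computation, and then invoking join cancellation on $\link(x_{\sigma};\Delta^{1})\cong\norm{\Delta(\poset_{<\sigma})}\ast\norm{\Delta(\poset_{>\sigma})}$ to conclude that each dual cell is a PL ball) is exactly the standard argument in the PL-topology literature; the subsequent bookkeeping for the two regular-cell-complex axioms and for $\poset(\Delta^{\dual})\cong\poset^{\dual}$ is accurate. You are also right to make the PL-sphere hypothesis explicit --- the paper leaves it implicit in the statement (it is inherited from Proposition~\ref{prop:dual cell cpx}), but it is genuinely needed for the dual cells to be balls.
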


\subsection{Quotients of Regular Cell Complexes}

\label{subsec:quotients_of_cell_complexes}
Our next goal is to develop
a notion of a quotient of a regular cell complex $\Delta$, in which
cells are merged together according to a given equivalence relation
on the cells of $\Delta$.

Let $\Delta$ be a regular cell complex with face poset $\poset$,
so that $\norm{\Delta}\subseteq\RR^{d}$. Given a homogeneous quotient
$\poset\eqrel$ of $\poset$, define the set 
\[
\Delta\eqrel\;:=\left\{ \uni\eqclass{\sigma}:\eqclass{\sigma}\in\poset\eqrel\right\} 
\]
where $\uni\eqclass{\sigma}$ denotes the union $\uni_{\tau\in\eqclass{\sigma}}\tau$.
Note that homogeneity of $\sim$ implies that $\uni\tilde{\sigma}\subseteq\uni\tilde{\tau}$
as sets if and only if $\uni\tilde{\sigma}\leq\uni\tilde{\tau}$ in
$\poset\eqrel$.

Under certain conditions, $\Delta\eqrel$ is again a regular cell
complex:

\begin{theorem}\label{thm:elem_quotient_implies_quotient_complex_is_regular}

Suppose:
\begin{enumerate}
\item The poset $\poset\eqrel$ is an elementary quotient,
\item The augmented poset $\lattice(\poset)$ is a lattice, and 
\item Each $\sigma\in\Delta$ is a PL ball. 
\end{enumerate}
Then $\Delta\eqrel$ is a regular cell complex with face poset $\poset\eqrel$,
such that each $\uni\tilde{\sigma}\in\Delta\eqrel$ is a PL ball.

\end{theorem}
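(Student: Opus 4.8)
The plan is to analyze the structure of an elementary quotient $\poset\eqrel$ cell by cell, using the fact that the nontrivial equivalence classes each consist of exactly three cells $\gamma \lessdot \sigma, \tau$ with $\gamma$ covered by both $\sigma$ and $\tau$, and to show that the union $\sigma \cup \gamma \cup \tau$ is again a PL ball glued correctly to the rest of the complex. First I would set up the bookkeeping: let $\poset\eqrel$ be the given elementary quotient, and for each nontrivial class write it as $\{\gamma, \sigma, \tau\}$; singleton classes contribute their own cell unchanged, so all the work is local to the triples. The key geometric claim to establish is that for such a triple, $\sigma$ and $\tau$ are both PL balls of the same dimension $k$ (namely $\dim\gamma + 1$), that $\gamma$ is a PL $(k-1)$-ball lying in the boundary of each, and that $\gamma = \sigma \cap \tau$. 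The dimension statement follows from $\poset$ being graded (or at least from the covering relations), and the intersection statement is where the lattice hypothesis on $\lattice(\poset)$ enters: since $\gamma$ is a common lower bound of $\sigma$ and $\tau$ and is covered by both, $\gamma$ must be their meet, hence $\sigma\cap\tau = \gamma$ as cells; one also needs that $\gamma$ is a facet of each, which again uses that $\sigma,\tau$ each cover $\gamma$ in the face poset of a regular cell complex (so $\gamma$ is a maximal proper face).

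Granting this, Lemma~\ref{prop:union of PL balls is PL ball} applies directly: it gives that $\sigma \cup \tau$ (which equals $\sigma \cup \gamma \cup \tau$ since $\gamma \subseteq \sigma$) is a PL ball whose interior is exactly $\sigma^\circ \cup \tau^\circ \cup \gamma^\circ$, i.e. precisely the union of the open cells in the equivalence class. This last point is what makes the interiors of the merged cells partition $\norm{\Delta\eqrel} = \norm{\Delta}$ — condition (1) of a regular cell complex — since the open-cell partition of $\norm{\Delta}$ is simply regrouped according to $\sim$. For condition (2), the boundary axiom, I would verify that $\bd(\uni\tilde\sigma)$ is the union of the merged cells below $\uni\tilde\sigma$ in $\poset\eqrel$: the proof of Lemma~\ref{prop:union of PL balls is PL ball} already identifies $\partial(\sigma\cup\tau)$ as a PL sphere assembled from $\partial\sigma \setminus \gamma$, $\partial\tau\setminus\gamma$, and $\partial\gamma$, and since $\sim$ is $\poset$-homogeneous these pieces are unions of cells of $\Delta$ which regroup into merged cells; homogeneity is exactly what guarantees that "merged cell $\leq$ merged cell" in $\poset\eqrel$ matches set containment, which was already noted right before the theorem statement. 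So the face poset of $\Delta\eqrel$ is $\poset\eqrel$ on the nose.

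The main obstacle I anticipate is the verification that $\sigma \cap \tau = \gamma$ as \emph{sets} (not just that $\gamma$ is their poset-meet) and, relatedly, that no other cell of $\Delta$ gets unintentionally swept into the closure of a merged cell — i.e. controlling how the merged cells fit together globally rather than just checking each triple in isolation. In a general regular cell complex, two cells can meet in a union of lower cells rather than a single face, so one genuinely needs the lattice property of $\lattice(\poset)$ to force the intersection to be the single cell $\gamma$; I would make this precise by arguing that any cell in $\sigma \cap \tau$ lies below both $\sigma$ and $\tau$, hence below their meet $\gamma$, hence inside $\gamma$. Once that is nailed down, the rest is an assembly of the PL-topology facts in Proposition~\ref{prop:PL_ball_facts} together with Lemma~\ref{prop:union of PL balls is PL ball}, plus the purely order-theoretic observation that homogeneity makes the cell-poset of the quotient complex equal to $\poset\eqrel$. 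I would also remark that the hypothesis "each $\sigma \in \Delta$ is a PL ball" is used both to feed Lemma~\ref{prop:union of PL balls is PL ball} and to ensure the singleton cells of $\Delta\eqrel$ are still PL balls, so the conclusion "each $\uni\tilde\sigma$ is a PL ball" holds uniformly across trivial and nontrivial classes.
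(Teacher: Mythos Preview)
Your proposal is correct and follows essentially the same approach as the paper: use the lattice hypothesis (via Proposition~\ref{prop:Lattice iff closed under non-empty intersections}) to get $\sigma\cap\tau=\gamma$ as sets, then apply Proposition~\ref{prop:PL_ball_facts}(1) and Lemma~\ref{prop:union of PL balls is PL ball} to conclude that $\uni\tilde\sigma$ is a PL ball with interior $\sigma^\circ\cup\tau^\circ\cup\gamma^\circ$. The only organizational difference is that the paper has already packaged your verification of the regular-cell-complex axioms and the face-poset identification into Lemma~\ref{lem: homog quotient implies cell cpx}, so its proof simply checks that lemma's hypothesis and cites it, whereas you reprove that lemma inline.
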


\begin{corollary}\label{cor:factorization_implies_quotient_complex_is_regular}

Suppose $\poset$ admits a factorization $\poset=\poset_{0},\poset_{1},\ldots,\poset_{k}=\poset\eqrel$
into elementary quotients, such that $\lattice(\poset_{i})$ is a
lattice for each $i=0,1,2,\ldots,k-1$. Suppose further that each
$\sigma\in\Delta$ is a PL ball. Then $\Delta\eqrel$ is a regular
cell complex with face poset $\poset\eqrel$.

\end{corollary}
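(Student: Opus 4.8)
The plan is to derive Corollary~\ref{cor:factorization_implies_quotient_complex_is_regular} from Theorem~\ref{thm:elem_quotient_implies_quotient_complex_is_regular} by a straightforward induction on the length $k$ of the factorization, the only subtlety being that at each step one must verify that the three hypotheses of the theorem are still met for the intermediate complex.

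First I would set up the induction. The base case $k=0$ is trivial: $\poset\eqrel=\poset$, the equivalence relation is trivial, and $\Delta\eqrel=\Delta$, which is a regular cell complex whose cells are PL balls by hypothesis. For the inductive step, suppose the result holds for factorizations of length $k-1$, and that $\poset=\poset_{0},\poset_{1},\ldots,\poset_{k}=\poset\eqrel$ is a factorization of length $k$ with $\lattice(\poset_{i})$ a lattice for $i=0,\ldots,k-1$. By the inductive hypothesis applied to the sub-factorization $\poset=\poset_{0},\ldots,\poset_{k-1}$, the complex $\Delta_{k-1}:=\Delta\eqrel_{k-1}$ (the quotient of $\Delta$ obtained after the first $k-1$ elementary quotients) is a regular cell complex with face poset $\poset_{k-1}$, and moreover each of its cells $\uni\tilde\sigma$ is a PL ball --- note that the full conclusion of Theorem~\ref{thm:elem_quotient_implies_quotient_complex_is_regular}, not merely its regularity, is needed here to keep the induction going.

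Now I would apply Theorem~\ref{thm:elem_quotient_implies_quotient_complex_is_regular} to the regular cell complex $\Delta_{k-1}$ together with the elementary quotient $\poset_{k}=\poset_{k-1}\eqrel_{k}$. The three hypotheses are satisfied: (1) $\poset_{k}=\poset_{k-1}\eqrel_{k}$ is an elementary quotient by the definition of the factorization; (2) $\lattice(\poset_{k-1})$ is a lattice, which is exactly the hypothesis of the corollary for index $i=k-1$; and (3) each cell of $\Delta_{k-1}$ is a PL ball, which is part of the inductive hypothesis. One must also observe that the set $\Delta_{k-1}\eqrel_{k}$ built by merging cells of $\Delta_{k-1}$ according to $\eqrel_{k}$ coincides with $\Delta\eqrel$, since merging cells of $\Delta$ according to $\sim$ can be performed in stages corresponding to the coarsenings $\poset_{0},\ldots,\poset_{k}$; this is essentially the observation, already used in the proof of Theorem~\ref{thm:elim_system_poset_admits_factorization}, that $\uni\tilde\sigma$ over a class of $\sim$ is the union of the cells $\uni\tilde\tau$ over the classes of $\sim_{k-1}$ that it comprises. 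Theorem~\ref{thm:elem_quotient_implies_quotient_complex_is_regular} then yields that $\Delta_{k-1}\eqrel_{k}=\Delta\eqrel$ is a regular cell complex with face poset $\poset_{k-1}\eqrel_{k}=\poset_{k}=\poset\eqrel$, and that each of its cells is a PL ball, completing the induction.

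The only point requiring genuine care --- and hence what I expect to be the main obstacle --- is the bookkeeping identification $\Delta_{k-1}\eqrel_{k}=\Delta\eqrel$ together with the matching of face posets $\poset_{k-1}\eqrel_{k}=\poset_{k}$. This is conceptually clear but needs the observation that a class of the equivalence relation $\sim$ defining $\poset\eqrel$ is a union of classes of $\sim_{k-1}$ (because the partitions refining $\Pi$ form a chain under refinement, each step a single split, as in the proof of Theorem~\ref{thm:elim_system_poset_admits_factorization}), so that taking the union $\uni\tilde\sigma$ of cells of $\Delta$ in such a class equals taking the union of the already-merged cells $\uni\tilde\tau$ of $\Delta_{k-1}$; transitivity of "forming unions of unions" then gives the identification of underlying spaces, and the matching of face posets follows since each elementary quotient step is realized on the level of posets by $\poset_{i}=\poset_{i-1}\eqrel_{i}$.
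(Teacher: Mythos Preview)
Your proposal is correct and is exactly the intended argument: the paper in fact gives no proof for this corollary, treating it as an immediate consequence of Theorem~\ref{thm:elem_quotient_implies_quotient_complex_is_regular} by induction on $k$. Your write-up supplies precisely the details the paper omits, including the key observation that the full conclusion of the theorem (each merged cell is again a PL ball) is needed to propagate hypothesis~(3) through the induction.
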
 In the remainder of this section we prove
Theorem~\ref{thm:elem_quotient_implies_quotient_complex_is_regular}. The main ingredient is a topological criterion for $\Delta\eqrel$ to be a regular cell complex:

\begin{lemma}

\label{lem: homog quotient implies cell cpx}

Suppose that each $\uni\eqclass{\sigma}$ in $\Delta\eqrel$ is a
ball whose interior equals the union of the interiors of the cells
of $\eqclass{\sigma}$. Then $\Delta\eqrel$ is a regular cell complex
with face poset $\poset\eqrel$.

\end{lemma}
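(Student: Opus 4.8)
The plan is to check the two defining axioms of a regular cell complex for $\Delta\eqrel$ directly, treating the hypothesis as the sole topological input and using $\poset$-homogeneity for all the combinatorial bookkeeping. First, since the classes of $\sim$ partition $\Delta$ and $\norm{\Delta}=\uni_{\tau\in\Delta}\tau$, the set $\uni_{\eqclass{\sigma}\in\poset\eqrel}\uni\eqclass{\sigma}$ is just $\norm{\Delta}$; so I take $\norm{\Delta\eqrel}:=\norm{\Delta}$, a compact Hausdorff subspace of $\RR^{d}$, and $\Delta\eqrel$ is then a finite collection of closed balls in it (each $\uni\eqclass{\sigma}$ is a finite union of closed cells, hence closed, and a ball by hypothesis).

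For axiom (1), the hypothesis gives $(\uni\eqclass{\sigma})^{\circ}=\uni_{\tau\in\eqclass{\sigma}}\tau^{\circ}$, a disjoint union since distinct cells of $\Delta$ have disjoint relative interiors; taking the union over all classes and using that they partition $\Delta$ yields $\uni_{\eqclass{\sigma}}(\uni\eqclass{\sigma})^{\circ}=\uni_{\tau\in\Delta}\tau^{\circ}=\norm{\Delta}$, with the pieces pairwise disjoint because all the $\tau^{\circ}$ are. The combinatorial heart of the argument is the identity
\[
\uni\eqclass{\sigma}=\uni\bigl\{\rho^{\circ}:\rho\in\Delta,\ \eqclass{\rho}\leq\eqclass{\sigma}\text{ in }\poset\eqrel\bigr\},
\]
a disjoint union. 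It holds because in the regular cell complex $\Delta$ every cell is the disjoint union of the relative interiors of its faces, $\tau=\uni_{\rho\subseteq\tau}\rho^{\circ}$, so that $\uni\eqclass{\sigma}=\uni_{\tau\in\eqclass{\sigma}}\uni_{\rho\subseteq\tau}\rho^{\circ}$; and the set of $\rho$ occurring, namely $\{\rho:\rho\subseteq\tau\text{ for some }\tau\in\eqclass{\sigma}\}$, is exactly $\{\rho:\eqclass{\rho}\leq\eqclass{\sigma}\}$ by $\poset$-homogeneity (in the Hallam--Sagan form: $\eqclass{\rho}\leq\eqclass{\sigma}$ iff every element of $\eqclass{\rho}$ lies below some element of $\eqclass{\sigma}$), while disjointness is again the global disjointness of the $\rho^{\circ}$.

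Given this identity, axiom (2) follows by subtraction: since $(\uni\eqclass{\sigma})^{\circ}=\uni\{\rho^{\circ}:\rho\in\eqclass{\sigma}\}$, we get
\[
\bd(\uni\eqclass{\sigma})=\uni\eqclass{\sigma}\setminus(\uni\eqclass{\sigma})^{\circ}=\uni\bigl\{\rho^{\circ}:\eqclass{\rho}<\eqclass{\sigma}\bigr\},
\]
and re-expanding each $\uni\eqclass{\rho}$ via the same identity rewrites the right-hand side as $\uni\{\uni\eqclass{\rho}:\eqclass{\rho}<\eqclass{\sigma}\}$. By the remark preceding the lemma, $\eqclass{\rho}<\eqclass{\sigma}$ is equivalent to $\uni\eqclass{\rho}\subsetneq\uni\eqclass{\sigma}$, so this says precisely that $\bd(\uni\eqclass{\sigma})$ is the union of the members of $\Delta\eqrel$ properly contained in it. That same remark also shows $\eqclass{\sigma}\mapsto\uni\eqclass{\sigma}$ is a poset isomorphism from $\poset\eqrel$ onto the face poset $\poset(\Delta\eqrel)$ (injectivity: $\uni\eqclass{\rho}=\uni\eqclass{\sigma}$ forces $\eqclass{\rho}\leq\eqclass{\sigma}\leq\eqclass{\rho}$), completing the proof.

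The argument is essentially formal once the hypothesis is in hand; the one point needing care is the double appeal to $\poset$-homogeneity around the decomposition identity — first to identify the open cells appearing in $\uni\eqclass{\sigma}$ with the down-set of $\eqclass{\sigma}$ in $\poset\eqrel$, and then to translate the resulting set inclusions among the $\uni\eqclass{\rho}$ back into the order relation of $\poset\eqrel$. There is no real obstacle here: the genuine difficulty of the section is deferred to verifying the hypothesis, i.e.\ that each $\uni\eqclass{\sigma}$ really is a PL ball with the expected interior, which is exactly what Theorem~\ref{thm:elem_quotient_implies_quotient_complex_is_regular} together with Lemma~\ref{prop:union of PL balls is PL ball} supply.
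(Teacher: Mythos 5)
Your proof is correct and follows essentially the same route as the paper's: both arguments rest on the disjointness of the open cells of $\Delta$, the hypothesis identifying $(\uni\eqclass{\sigma})^{\circ}$ with $\uni_{\tau\in\eqclass{\sigma}}\tau^{\circ}$, and homogeneity to show that the open cells contained in $\uni\eqclass{\sigma}$ are exactly those $\rho^{\circ}$ with $\eqclass{\rho}\leq\eqclass{\sigma}$. The paper organizes the boundary computation by starting from $\bigcup_{\eqclass{\tau}<\eqclass{\sigma}}\uni\eqclass{\tau}$ and simplifying, whereas you state the decomposition identity explicitly and subtract, but the content is identical.
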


\begin{proof}

We first show that $\Delta\eqrel$ is a regular cell complex. It is
clear that the underlying topological spaces of $\Delta$ and $\Delta\eqrel$
are the same. To see that the interiors of the balls in $\Delta\eqrel$
are disjoint, let $\uni\eqclass{\sigma}_{1}$ and $\uni\eqclass{\sigma}_{2}$
be two balls in $\Delta\eqrel$ such that 
\[
\left(\uni\eqclass{\sigma}_{1}\right)^{\circ}\cap\left(\uni\eqclass{\sigma}_{2}\right)^{\circ}=\left(\bigcup_{\tau_{1}\in\eqclass{\sigma}_{1}}\tau_{1}^{\circ}\right)\cap\left(\bigcup_{\tau_{2}\in\eqclass{\sigma}_{2}}\tau_{2}^{\circ}\right)=\bigcup_{\substack{\tau_{1}\in\eqclass{\sigma}_{1}\\
\tau_{2}\in\eqclass{\sigma}_{2}
}
}\tau_{1}^{\circ}\cap\tau_{2}^{\circ}
\]
is non-empty. In particular, there must exist $\tau_{1}\in\eqclass{\sigma_{1}}$
and $\tau_{2}\in\eqclass{\sigma_{2}}$ such that $\tau_{1}^{\circ}$
and $\tau_{2}^{\circ}$ intersect. This can only happen if $\tau_{1}=\tau_{2}$,
and hence $\eqclass{\sigma}_{1}=\eqclass{\sigma}_{2}$. To see that
the boundary of each $\uni\eqclass{\sigma}$ in $\Delta\eqrel$ is
a union of members of $\Delta\eqrel$, let $\uni\eqclass{\sigma}$
be an element of $\Delta\eqrel$. Then
\begin{equation}
\bigcup_{\eqclass{\tau}<\eqclass{\sigma}}\left(\uni\eqclass{\tau}\right)=\bigcup_{\delta\in\eqclass{\sigma}}\bigcup_{\substack{\tau<\delta\\
\tau\notin\eqclass{\sigma}
}
}\tau=\bigcup_{\delta\in\eqclass{\sigma}}\bigcup_{\substack{\tau<\delta\\
\tau\notin\eqclass{\sigma}
}
}\tau^{\circ}.\label{eq:boundary of cell-1}
\end{equation}
We justify the last equality. We may write $\tau=\uni_{\gamma\leq\tau}\gamma^{\circ}$
for every $\tau\in\Delta$. Hence, the last equality holds provided
we can show the following statement: whenever we have $\gamma\leq\tau<\delta\in\eqclass{\sigma}$
where $\tau\notin\eqclass{\sigma}$, we must also have $\gamma\notin\eqclass{\sigma}$.
The condition $\gamma\leq\tau$ implies $\eqclass{\gamma}\leq\eqclass{\tau}$.
The condition $\tau<\delta$ implies $\eqclass{\tau}\leq\eqclass{\delta}=\eqclass{\sigma}$.
On the other hand, the condition $\tau\notin\eqclass{\sigma}$ implies
$\eqclass{\tau}\neq\eqclass{\sigma}$, and therefore $\eqclass{\tau}<\eqclass{\sigma}$.
We conclude $\eqclass{\gamma}\leq\eqclass{\tau}<\eqclass{\sigma}$,
and in particular $\gamma\notin\eqclass{\sigma}$. Note that this
argument uses the fact that $\poset\eqrel$ is a poset, which follows
from homogeneity of $\sim$. Now, since the interiors of cells of
$\Delta$ partition $\norm{\Delta}$, we have by (\ref{eq:boundary of cell-1})
that
\begin{align*}
\bigcup_{\eqclass{\tau}<\eqclass{\sigma}}\left(\uni\eqclass{\tau}\right)=\bigcup_{\delta\in\eqclass{\sigma}}\left(\left(\bigcup_{\tau\leq\delta}\tau^{\circ}\right)\smallsetminus\bigcup_{\gamma\in\eqclass{\sigma}}\gamma^{\circ}\right) & =\bigcup_{\delta\in\eqclass{\sigma}}\left(\delta\smallsetminus\bigcup_{\gamma\in\eqclass{\sigma}}\gamma^{\circ}\right)=\left(\uni\eqclass{\sigma}\right)\smallsetminus\bigcup_{\gamma\in\eqclass{\sigma}}\gamma^{\circ}.
\end{align*}
We therefore conclude
\[
\bd\left(\uni\eqclass{\sigma}\right)=\left(\uni\eqclass{\sigma}\right)\smallsetminus\left(\uni\eqclass{\sigma}\right)^{\circ}=\left(\uni\eqclass{\sigma}\right)\smallsetminus\bigcup_{\gamma\in\eqclass{\sigma}}\gamma^{\circ}=\bigcup_{\eqclass{\tau}<\eqclass{\sigma}}\left(\uni\eqclass{\tau}\right).
\]

The proof that the face poset of $\Delta\eqrel$ is $\poset\eqrel$
is straightforward. If $\uni\eqclass{\tau}\subseteq\uni\eqclass{\sigma}$,
then this means in particular that $\tau\subseteq\sigma$, hence $\tau\leq\sigma$
in $\poset$, hence $\eqclass{\tau}\leq\eqclass{\sigma}$ in $\poset\eqrel$.
Conversely, if $\eqclass{\tau}\leq\eqclass{\sigma}$ in $\poset\eqrel$,
then there exists a cell of $\eqclass{\tau}$ contained in some cell
of $\eqclass{\sigma}$. By homogeneity, then, every cell of $\eqclass{\tau}$
in contained in some cell of $\eqclass{\sigma}$. Hence $\uni\eqclass{\tau}\subseteq\uni\eqclass{\sigma}$.\end{proof}

\begin{proposition}[{\cite[Section 4.7, pp.204]{BLSWZ:1993}}]

\label{prop:Lattice iff closed under non-empty intersections}

Let $\Delta$ be a regular cell complex with face poset $\poset$.
Then the augmented poset $\lattice(\poset)=\poset\cup\{\hat{\zero},\hat{\oneone}\}$
is a lattice if and only if $\Delta$ is closed under non-empty intersections:
for all $\sigma,\tau\in\Delta$ such that $\sigma\cap\tau$ is non-empty,
we have $\sigma\cap\tau\in\Delta$.\qed

\end{proposition}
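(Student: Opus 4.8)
The plan is to prove both implications directly from the two defining axioms of a regular cell complex, the crux being a translation between the \emph{set-theoretic} intersection of two cells and the \emph{order-theoretic} meet in $\lattice(\poset)$. First I would record the standard decomposition $\sigma=\uni_{\tau\in\Delta,\,\tau\subseteq\sigma}\tau^{\circ}$, valid for every $\sigma\in\Delta$: this follows by induction on $\dim\sigma$ from $\sigma=\sigma^{\circ}\cup\bd(\sigma)$ together with axiom~(2), and the union is disjoint by axiom~(1). Two consequences will be used throughout: (a) if a point $x$ lies in a cell $\sigma$, then the \emph{unique} cell $\rho$ with $x\in\rho^{\circ}$ is a face of $\sigma$; and (b) since the partial order on $\poset$ is inclusion of cells, ``$\gamma\subseteq\sigma$ as subsets of $\norm{\Delta}$'' is the same statement as ``$\gamma\leq\sigma$ in $\poset$''. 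In particular, for cells $\sigma,\tau$ and any point $y\in\sigma\cap\tau$, the cell $\rho$ with $y\in\rho^{\circ}$ is a common lower bound of $\sigma$ and $\tau$ in $\poset$.

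For the ``if'' direction, suppose $\Delta$ is closed under non-empty intersections. Since $\lattice(\poset)$ is finite and has a top element, it suffices to produce binary meets: a finite poset with a top element in which any two elements have a meet is automatically a lattice, the join of $a$ and $b$ being the meet of the non-empty set of common upper bounds. Given $\sigma,\tau\in\poset$: if they have no common lower bound in $\poset$, their meet in $\lattice(\poset)$ is $\hat{\zero}$; otherwise a common lower bound $\gamma$ satisfies $\gamma\subseteq\sigma\cap\tau$, so $\sigma\cap\tau\neq\emptyset$ and hence is a cell $\mu:=\sigma\cap\tau\in\Delta$ by hypothesis. Then $\mu\leq\sigma$ and $\mu\leq\tau$, and any common lower bound $\gamma$ of $\sigma,\tau$ has $\gamma\subseteq\sigma\cap\tau=\mu$; thus $\mu$ is the meet of $\sigma$ and $\tau$.

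For the ``only if'' direction, suppose $\lattice(\poset)$ is a lattice and let $\sigma,\tau\in\Delta$ with $\sigma\cap\tau\neq\emptyset$. Choosing $y\in\sigma\cap\tau$ and invoking consequence~(a) shows that $\sigma$ and $\tau$ admit a common lower bound in $\poset$; hence the meet $\mu:=\sigma\wedge\tau$ computed in $\lattice(\poset)$ is not $\hat{\zero}$, and since $\mu\leq\sigma$ it is not $\hat{\oneone}$ either, so $\mu$ is an honest cell of $\Delta$. From $\mu\leq\sigma$ and $\mu\leq\tau$ we obtain $\mu\subseteq\sigma\cap\tau$; conversely, for each $y\in\sigma\cap\tau$ the cell $\rho'$ with $y\in(\rho')^{\circ}$ is a common lower bound of $\sigma,\tau$, so $\rho'\leq\mu$ and therefore $y\in\rho'\subseteq\mu$. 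Hence $\sigma\cap\tau=\mu\in\Delta$.

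The only step requiring genuine care is consequence~(a) --- that a point of a cell lies in the relative interior of a \emph{face} of that cell rather than of some unrelated cell; everything else, namely the inductive relative-interior decomposition and the elementary lattice-completion observation, is routine. Once (a) is in hand, the set-theoretic intersection of two cells coincides with their meet in $\lattice(\poset)$ whenever it is non-empty, which is precisely the content of the proposition.
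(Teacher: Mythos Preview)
Your argument is correct. Note, however, that the paper does not supply its own proof of this proposition: it is stated with a \qed and attributed to \cite[Section~4.7, pp.~204]{BLSWZ:1993}, so there is no in-paper argument to compare against. What you have written is essentially the standard proof one finds in that reference, hinging on the decomposition $\sigma=\bigcup_{\tau\leq\sigma}\tau^{\circ}$ and the resulting fact that the open carrier of any point of $\sigma$ is a face of $\sigma$; once that is established, the equivalence between set-theoretic intersection and order-theoretic meet is immediate, exactly as you argue.
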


\begin{proof}[Proof of Theorem~\ref{thm:elem_quotient_implies_quotient_complex_is_regular}]

It is clear that for any singleton class $\eqclass{\sigma}=\left\{ \sigma\right\} $,
$\uni\eqclass{\sigma}$ satisfies the hypothesis of Lemma \ref{lem: homog quotient implies cell cpx}.
Now suppose $\eqclass{\sigma}=\left\{ \sigma,\tau,\gamma\right\} $
is a class in $\sim$. It is known that the function $\sigma\mapsto\dim(\sigma)$
is a rank function on $\poset$. In particular, since $\sigma$ and
$\tau$ cover $\gamma$, then we must have $\dim(\sigma)=\dim(\tau)=\dim(\gamma)+1$.
Moreover, since $\lattice(\poset)$ is a lattice, we must have $\gamma=\sigma\cap\tau$
by Proposition \ref{prop:Lattice iff closed under non-empty intersections}.
Proposition \ref{prop:PL_ball_facts} (1) and Lemma \ref{prop:union of PL balls is PL ball}
then show that $\uni\eqclass{\sigma}$ is a PL ball which satisfies
the hypothesis of Lemma \ref{lem: homog quotient implies cell cpx}.
\end{proof}
\section{Proof of the Main Theorem} \label{sec:pf_main_theorem}
In this section we prove Theorem~\ref{thm:arrangement_of_pseudospheres_extended}. We assume the fine mixed subdivision $\fmixed$ of $n\ssimplex_{d-1}$, the sign matrix $A$, and the oriented matroid $\widetilde{\orientedmatroid}$ are
as defined in Section~\ref{sec:mixed_subdiv_to_PSA}.
\subsection{Properties of the Extended Patchworking Complex} \label{sec:properties+extended+patchworking+complex}
We begin this section by establishing some technical details of the extended patchworking complex defined in Section~\ref{sec:mixed_subdiv_to_PSA}. 
Note that each face of the polytope $n\ssimplex_{d-1}$ is in bijection
with a nonempty subset $I\subseteq\rows$. Let $\fmixed_{I}$ denote
the cells of $\fmixed$ contained in the coordinate subspace $\RR^{I}\times\{0\}^{\overline{I}}$
of $\RR^{d}$. For $S\in\{-1,0,1\}^{d}\smallsetminus\zero$ and $\sigma\in\fmixed_{\supp(S)}$,
let $\sigma_{S}:=\cube_{S}+S\cdot\sigma$.

\begin{proposition}

\label{prop:patchworking complex is PL sphere}

Define the collection of polytopes given by
\[
\octfmixed:=\left\{ \sigma_{S}:S\in\left\{ -1,0,1\right\} ^{d}\smallsetminus\zero,\;\sigma\in\fmixed_{\supp(S)}\right\} .
\]
Then the following statements hold:
\begin{enumerate}
\item $\octfmixed$ is a polyhedral cell complex which subdivides
the boundary of $\cube_{d}+n\crosspolytope_{d}$. 
\item For each $\sigma_{S}\in\octfmixed$, both $S$ and $\sigma$
can be recovered from $\sigma_{S}$. 
\item For $\sigma_{S},\tau_{T}\in\octfmixed$, we have $\sigma_{S}\subseteq\tau_{T}$
if and only if $S\geq T$ and $\sigma\subseteq\tau$.
\end{enumerate}
\end{proposition}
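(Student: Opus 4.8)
The plan is to treat the three assertions in order, using the Cayley-trick structure of $\fmixed$ together with the Minkowski-sum decomposition $\octagon_d = \cube_d + n\crosspolytope_d$.

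First I would verify that each $\sigma_S = \cube_S + S\cdot\sigma$ is a genuine face of $\octagon_d$. Write $\octagon_d$ as the Minkowski sum of the cube and $n$ copies of the crosspolytope; a face of a Minkowski sum is a sum of faces of the summands, selected by a common linear functional. Choose a functional $c$ that is maximized on $\cube_d$ exactly at the face $\cube_S$ (i.e.\ $c_i = S_i$ for $i\in\supp(S)$ and $c_i=0$ otherwise) and, on each crosspolytope summand, is maximized along the coordinate direction $S\cdot\ee_k$ for the appropriate $k$ dictated by the Minkowski summand of $\sigma$ in the mixed cell. Since $\fmixed$ is a fine mixed subdivision, $\sigma = \sum_{j\in\ground}\sigma^{(j)}$ with each $\sigma^{(j)}$ a face of $n_j\crosspolytope_d$ supported on coordinates in $\supp_\rows(F)\subseteq\supp(S)$, so $S\cdot\sigma$ lies in the coordinate subspace $\RR^{\supp(S)}$ and is itself a face of $n\crosspolytope_d$ in the reflected orthant $S$. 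The sum of these faces is a face of $\octagon_d$, and as $\sigma$ ranges over $\fmixed_{\supp(S)}$ and $S$ over all nonzero sign vectors, these faces tile each orthant-facet of $\octagon_d$ (by the fine-mixed-subdivision property inside the reflected copy of $n\ssimplex_{d-1}$, translated by $\cube_S$) and glue consistently across the coordinate hyperplanes --- this last point uses that a cell $\sigma$ lying in $\RR^I\times\{0\}^{\overline I}$ appears, with the same reflection, in every orthant whose sign vector restricts correctly on $I$, exactly as in the vertex-labeling discussion of Example~\ref{ex:small_patchworking}. Together with the fact that the relative boundaries match (faces of $\sigma_S$ are again of the form $\tau_T$ by the same mechanism), this gives (1): $\octfmixed$ is a polyhedral cell complex subdividing $\partial\octagon_d$, hence a PL sphere.

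For (2), I would recover $S$ and $\sigma$ from $\sigma_S$ as follows. The affine hull of $\sigma_S$ meets $\cube_d$ in a single face $\cube_S$ (the unique face of the cube whose affine span it contains), and the pattern of which coordinates are pinned to $\pm1$ there reads off $S$ on $\supp(S)$; the remaining coordinates of $\sigma_S$ vary within $\RR^{\supp(S)}$, pinning down $\supp(S)$ itself. Translating $\sigma_S$ by $-\cube_S$ and applying the coordinate reflection $S$ (i.e.\ multiplying coordinate $i$ by $S_i$) returns $\sigma\subseteq n\ssimplex_{d-1}$. Both operations are determined by $\sigma_S$ as a subset of $\RR^d$, so the map $(S,\sigma)\mapsto\sigma_S$ is injective; this is really just bookkeeping once (1) is in place.

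For (3), the containment $\sigma_S\subseteq\tau_T$ is a face relation in the polyhedral complex $\octfmixed$. If $\sigma_S\subseteq\tau_T$, then restricting to the cube summand forces $\cube_S\subseteq\cube_T$, i.e.\ $S\geq T$ (the face $\cube_S$ of the cube is contained in $\cube_T$ iff $S$ refines $T$ in the sign order), and restricting to the crosspolytope part forces $S\cdot\sigma\subseteq T\cdot\tau$; since $S\geq T$ the two reflections agree on $\supp(T)\supseteq\supp_\rows(\text{cell of }\tau)$, so after undoing the reflection this says $\sigma\subseteq\tau$ inside $n\ssimplex_{d-1}$. The converse is immediate from the Minkowski-sum description: $S\geq T$ gives $\cube_S\subseteq\cube_T$, and $\sigma\subseteq\tau$ with compatible reflections gives $S\cdot\sigma\subseteq T\cdot\tau$, whence $\sigma_S = \cube_S + S\cdot\sigma\subseteq\cube_T + T\cdot\tau = \tau_T$.

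The main obstacle is the gluing argument in part (1): one must check carefully that the reflected copies of the fine mixed subdivision of $n\ssimplex_{d-1}$, each translated by the corresponding cube face $\cube_S$, fit together along the coordinate hyperplanes to tile $\partial\octagon_d$ without gaps or overlaps. The key input is that whenever a cell $\sigma$ of $\fmixed$ lies on a coordinate hyperplane $\{x_i=0\}$ --- equivalently, its forest $F$ has $i$ isolated in $\rows$, so $i\notin\supp_\rows(F)$ --- the two (or more) reflections of $\sigma$ that are supposed to meet there literally coincide as point sets, because negating the $i$-th coordinate fixes $\sigma$. Once this compatibility is established, the fact that $\octfmixed$ is a polyhedral cell complex subdividing the boundary of a polytope, and hence a PL sphere, is standard.
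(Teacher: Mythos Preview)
Your argument for part (1) contains a genuine error. You claim that each $\sigma_S = \cube_S + S\cdot\sigma$ is a face of $\octagon_d$, and attempt to exhibit a linear functional selecting it. But this is false in general: a face of the Minkowski sum $\cube_d + n\crosspolytope_d$ is determined by a \emph{single} linear functional $c$, and such a $c$ selects the \emph{same} face from each of the $n$ crosspolytope summands. Hence the faces of $\octagon_d$ are exactly the sets $\cube_S + S\cdot(n\ssimplex_I)$ for $\emptyset \neq I \subseteq \supp(S)$, not $\cube_S + S\cdot\sigma$ for an arbitrary mixed cell $\sigma$. The functional $c$ you write down (with $c_i=S_i$ on $\supp(S)$ and $0$ elsewhere) picks out the face $\cube_S + S\cdot n\ssimplex_{\supp(S)}$, and your attempt to have it ``maximized along $S\cdot\ee_k$ for the appropriate $k$ dictated by the Minkowski summand'' cannot work, since one functional cannot select different vertices on different copies of the same summand. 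You seem to sense this later when you say the cells ``tile each orthant-facet,'' but the argument as written is internally inconsistent.

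More seriously, what is missing is the intersection property needed for $\octfmixed$ to be a polyhedral complex: for arbitrary $\sigma_S,\tau_T\in\octfmixed$, one must show $\sigma_S\cap\tau_T$ is a common face of both. Your gluing discussion only addresses the case where a boundary cell $\sigma$ is literally fixed by the relevant reflection, but the hard case is when $S$ and $T$ are distinct conformal sign vectors with $\sigma\neq\tau$. The paper handles this by a direct coordinate computation: it shows $S$ and $T$ must be conformal whenever $\sigma_S\cap\tau_T\neq\emptyset$, and then proves $\sigma_S\cap\tau_T=(\sigma\cap\tau)_{S\circ T}$ by analyzing coordinates in $\supp(S)\smallsetminus\supp(T)$ and $\supp(T)\smallsetminus\supp(S)$ separately. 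This is the substantive content of (1), and your proposal does not supply it. Note also that the paper's argument uses nothing about the Cayley trick or mixed-cell structure; it works for any polyhedral subdivision of $n\ssimplex_{d-1}$, so leaning on the decomposition $\sigma=\sum_j\sigma^{(j)}$ is a red herring here. Your treatments of (2) and (3) are in the right spirit but would need the corrected picture of (1) to be made precise.
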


\begin{remark}The subdivision $\fmixed$ in the statement of Proposition~\ref{prop:patchworking complex is PL sphere} can be replaced by any polyhedral subdivision of $n\ssimplex_{d-1}$.
\end{remark}
\begin{proof}

First note that (2) follows from the fact that each $\sigma_{S}=\cube_{S}+S\cdot\sigma$
is a Minkowski sum of two affinely independent polytopes. Therefore,
projection allows us to recover both $\cube_{S}$ and $S\cdot\sigma$,
and therefore the pair $(S,\sigma)$. 

Recall the general fact that $F$ is a proper face of the Minkowski
sum $K+L$ of two full-dimensional polytopes $K$ and $L$ if and
only if there exists a non-zero objective function $\cc$ such that
$F=K_{\cc}+L_{\cc}$, where $K_{\cc}$ and $L_{\cc}$ denote the faces
of $K$ and $L$, respectively, maximized by $\cc$. Specializing
to the case $K=\cube_{d}$ and $L=n\crosspolytope_{d}$, we have $K_{\cc}=\cube_{S}$
and $L_{\cc}=S\cdot(n\ssimplex_{I})$, where $S$ is the componentwise
sign vector of $\cc$, $I$ is the set of all $i\in\rows$ such that
$\left|\cc_{i}\right|=\max_{k\in\rows}\left|\cc_{k}\right|$, and
$\ssimplex_{I}:=\conv(\ee_{i}:i\in I)$. It follows that the collection
of proper faces of $\cube_{d}+n\crosspolytope_{d}$ is given by
\[
\left\{ \cube_{S}+S\cdot(n\ssimplex_{I})\;:\;S\in\left\{ -1,0,1\right\} ^{d}\smallsetminus\zero,\;\emptyset\subsetneq I\subseteq\supp(S)\right\} .
\]
Since $\cube_{S}+S\cdot(n\ssimplex_{I})$ is the union of the cells
$\left\{ \sigma_{S}:\sigma\in\fmixed_{I}\right\} $, this shows that
the cells in $\octfmixed$ cover the boundary of $\cube_{d}+n\crosspolytope_{d}$.
The above fact about faces of Minkowski sums can also be used to show
that the faces of $\sigma_{S}=\cube_{S}+S\cdot\sigma$ are given by
$\left\{ \tau_{T}:\tau\text{ face of }\sigma,\;T\supseteq S\right\} $.
This establishes (3), and that $\octfmixed$ is closed under
taking faces.

To establish (1), it remains to show that the intersection of two
intersecting cells of $\sigma_{S},\tau_{T}\in\octfmixed$
is a face of both. If $\sigma_{S}\cap\tau_{T}$ is non-empty, then
$S$ and $T$ are conformal sign vectors since otherwise, if (say)
$i\in S^{+}\cap T^{-}$, then $\sigma_{S}$ would lie in the halfspace
$x_{i}\geq1$, while $\tau_{T}$ lies in the halfspace $x_{i}\leq-1$.
Now, we would like to show $\sigma_{S}\cap\tau_{T}=\left(\sigma\cap\tau\right)_{S\circ T}$,
where $S\circ T$ denotes sign vector composition. As argued above,
$\left(\sigma\cap\tau\right)_{S\circ T}$ is a face of both $\sigma_{S}$
and $\tau_{T}$. Thus, it remains to show that $\sigma_{S}\cap\tau_{T}$
is contained in $\left(\sigma\cap\tau\right)_{S\circ T}$.

Suppose $u+s=v+t$, where $u\in\cube_{S}$, $v\in\cube_{T}$, $s\in S\cdot\sigma$,
$t\in T\cdot\tau$. We are done if we can show $u=v$ and $s=t$.
For this it suffices to show $u_{i}=v_{i}$ for all $i\in\rows$.
Since $S,T$ are conformal, we have $u_{i}=v_{i}$ for all $i\in\supp(S)\cap\supp(T)$.
For $i\in\rows\smallsetminus(\supp(S)\cup\supp(T))$, we have $s_{i}=t_{i}=0$,
and hence $u_{i}=v_{i}$. Thus it remains to show $u_{i}=v_{i}$ in
the case when $i\in\supp(S)\smallsetminus\supp(T)$ or $i\in\supp(T)\smallsetminus\supp(S)$.

Suppose $i\in\supp(S)\smallsetminus\supp(T)$. The fact $i\in\supp(S)$
implies $\left|u_{i}\right|=1$, and the fact $i\notin\supp(T)$ implies
$t_{i}=0$. Now $u_{i}s_{i}\geq0$, which implies
\[
1+\left|s_{i}\right|=\left|u_{i}+s_{i}\right|=\left|v_{i}+t_{i}\right|=\left|v_{i}\right|\leq1.
\]
Hence $s_{i}=0=t_{i}$, and so $u_{i}=v_{i}$. The case $i\in\supp(T)\smallsetminus\supp(S)$
is proven analogously. \end{proof}

Recall that, by the Cayley trick, the cells in $\fmixed$ encode the simplices in $\mathcal{T}$,
for which no node in $\ground$ is isolated. This directly shows that
they fulfill property (E1) and (E2) of elimination systems (Definition~\ref{def:elimination+systems}). A proof that $\fmixed$ satisfies (E3) can be found in \cite[Proposition~4.12]{OhYoo:2011},
and this result has been generalized to arbitrary mixed subdivisions
in ~\cite[Theorem~7.11]{Horn1}.
Hence, we conclude:

\begin{proposition}\label{prop:toms_are_elimination_systems}

The set of forests encoded by $\fmixed$ form an
elimination system.\qed

\end{proposition}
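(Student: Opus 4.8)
The plan is to verify directly the three conditions (E1), (E2), (E3) of Definition~\ref{def:elimination+systems}, using the combinatorial dictionary furnished by the Cayley trick. Recall that this dictionary identifies the cells $\sigma_F$ of $\fmixed$ with the forests $F$ occurring as subgraphs of some spanning tree of the triangulation $\mathcal{T}$ of $\ssimplex_{d-1}\times\ssimplex_{n-1}$ and satisfying $\deg_F(j)\geq 1$ for every $j\in\ground$; moreover, the faces of a mixed cell $\sigma_G$ are precisely the subgraphs $F\subseteq G$ (inside the spanning tree carrying $G$) that still have no isolated node in $\ground$. With this in hand, (E1) and (E2) are immediate, and the whole content of the proposition lies in (E3).

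Indeed, (E1) merely restates the degree condition $\left|F_j\right|=\deg_F(j)\geq 1$, which holds because ``fine mixed'' forces every $j\in\ground$ to contribute a (vertex) Minkowski summand. For (E2), if $F\subseteq G\in\fmixed$, then choosing a spanning tree $T\in\mathcal{T}$ with $G\subseteq T$ gives $F\subseteq T$; combined with the hypothesis $F_j\neq\emptyset$ for all $j\in\ground$, the dictionary above exhibits $F$ as a face of $\sigma_G$, so $F\in\fmixed$. No new idea is needed for either of these.

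The substantive axiom is the elimination axiom (E3). I would note that (E1)--(E3) are exactly the axioms of a generic tropical oriented matroid with the comparability axiom deleted (cf.\ the observation following Definition~\ref{def:elimination+systems} and \cite[Definition~3.5]{ArdilaDevelin:2009}), so the claim is the ``subdivision yields a tropical oriented matroid'' direction of the Ardila--Develin correspondence. Property (E3) for fine mixed subdivisions is proved in \cite[Proposition~4.12]{OhYoo:2011}, and the extension to arbitrary mixed subdivisions is \cite[Theorem~7.11]{Horn1}; since removing comparability only weakens the conclusion, the result applies verbatim, and I would simply invoke it. If instead one wanted a self-contained argument, the step I expect to be the main obstacle is producing the witness $H$: setting $H_j:=F_j\cup G_j$ and choosing $H_k\in\{F_k,G_k,F_k\cup G_k\}$ componentwise is trivial combinatorially, but one must also guarantee that the resulting graph is acyclic and is a subgraph of some spanning tree of the \emph{given} triangulation $\mathcal{T}$, and it is precisely this compatibility that requires the flip and polyhedral arguments of Oh--Yoo and Horn.
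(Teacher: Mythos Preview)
Your proposal is correct and matches the paper's own argument essentially verbatim: the paper likewise observes that (E1) and (E2) are immediate from the Cayley-trick description of the cells of $\fmixed$ as forests with no isolated node in $\ground$, and for (E3) it cites exactly \cite[Proposition~4.12]{OhYoo:2011} and its generalization \cite[Theorem~7.11]{Horn1}. Your additional remarks on what a self-contained proof of (E3) would entail are accurate but go beyond what the paper provides.
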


Observing that $\sigma_{F}\in\fmixed_{I}$ if and only if $\supp_{\rows}(F)\subseteq I$,
and that on the level of posets, taking the dual just amounts to reversing
the ordering, we have

\begin{corollary}\label{cor:isomorphism_from_face_poset_to_combinatorial_poset}

The map $(\sigma_{F})_{S}^{\dual}\mapsto(S,F)$ determines an isomorphism
from the face poset of the dual complex $\octfmixed^{\dual}$
to the poset $\poset(\fmixed)$ defined in Definition~\ref{def:elim_poset}. 

\end{corollary}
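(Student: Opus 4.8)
The plan is to deduce the corollary from Proposition~\ref{prop:patchworking complex is PL sphere} together with the general fact that a PL sphere carries a dual cell complex whose face poset is the opposite poset; this is why the remark preceding the statement compresses the argument to a single line. Concretely, I would first record that $\octfmixed$ is a regular cell complex that is a PL sphere: by Proposition~\ref{prop:patchworking complex is PL sphere}(1) it is a polyhedral cell complex subdividing the boundary of the polytope $\cube_d+n\crosspolytope_d$, and the boundary of any polytope is a PL sphere. Hence Proposition~\ref{prop:dual cell cpx} applies and furnishes a regular cell complex $\octfmixed^{\dual}$ together with an order-reversing bijection $\sigma\mapsto\sigma^{\dual}$ from $\poset(\octfmixed)$ onto $\poset(\octfmixed^{\dual})$. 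Since $\fmixed$ is an elimination system by Proposition~\ref{prop:toms_are_elimination_systems}, the poset $\poset(\fmixed)$ is defined, and it then suffices to check that the labelling $(\sigma_F)_S\mapsto(S,F)$ of the cells of $\octfmixed$ identifies $\poset(\octfmixed)^{\dual}$ with $\poset(\fmixed)$.

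For the underlying sets, Proposition~\ref{prop:patchworking complex is PL sphere}(2) says that every cell of $\octfmixed$ is $(\sigma_F)_S$ for a unique nonzero $S\in\{-1,0,1\}^{\rows}$ and a unique cell $\sigma_F$ of $\fmixed$, subject to $\sigma_F\in\fmixed_{\supp(S)}$; since $\supp_{\rows}(F)$ encodes the minimal face of $n\ssimplex_{d-1}$ containing $\sigma_F$, this condition is exactly $\supp(S)\supseteq\supp_{\rows}(F)$, which is the index set appearing in Definition~\ref{def:elim_poset} (the constraint $S\neq\zero$ being vacuous there, since axiom (E1) forces $\supp_{\rows}(F)\neq\emptyset$ once $\ground\neq\emptyset$). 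For the orders, Proposition~\ref{prop:patchworking complex is PL sphere}(3) gives $(\sigma_F)_S\subseteq(\sigma_G)_T$ iff $S\geq T$ and $\sigma_F\subseteq\sigma_G$, and by the Cayley trick the face relation among the cells of $\fmixed$ is inclusion of the corresponding subforests, so $\sigma_F\subseteq\sigma_G\iff F\subseteq G$. Dualizing, $(\sigma_F)_S^{\dual}\leq(\sigma_G)_T^{\dual}$ in $\poset(\octfmixed^{\dual})$ iff $(\sigma_G)_T\subseteq(\sigma_F)_S$, i.e.\ iff $S\leq T$ and $F\supseteq G$, which is the order relation of $\poset(\fmixed)$; hence $(\sigma_F)_S^{\dual}\mapsto(S,F)$ is the asserted poset isomorphism.

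I do not expect a real obstacle here: all the substance of the statement is carried by Proposition~\ref{prop:patchworking complex is PL sphere} and by the existence of dual cell complexes, and what remains is bookkeeping. The one point that demands attention is keeping track of the two order reversals in play — one coming from passing to $\octfmixed^{\dual}$, and one already built into $\octfmixed$ through the face relation $\cube_S\subseteq\cube_T\iff S\geq T$ of the cube — so that the forest coordinates of the two posets end up related by inclusion in the correct direction rather than its opposite.
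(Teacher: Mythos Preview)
Your proposal is correct and follows the same route as the paper: the sentence preceding the corollary summarizes exactly your argument, invoking the equivalence $\sigma_F\in\fmixed_I\iff\supp_{\rows}(F)\subseteq I$ for the index set and the order reversal under dualization for the ordering, with Proposition~\ref{prop:patchworking complex is PL sphere}(2),(3) supplying the bijection and face relation that you spell out in detail. Your observation that the constraint $S\neq\zero$ is automatic from (E1) is a useful sanity check, and you have correctly tracked the two order reversals so that the resulting order $(S,F)\leq(T,G)\iff S\leq T,\;F\supseteq G$ matches how $\poset(\fmixed)$ is actually used throughout Section~\ref{subsec:Quotients_of_P(S)} (note the stated inclusion direction in Definition~\ref{def:elim_poset} is a typo, as the proofs of Proposition~\ref{prop:poset_covering} and Corollary~\ref{cor:poset_is_graded} make clear).
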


\label{subsec:elimination_systems_quotients}

\subsection{The Map $\varphi:\protect\poset(\protect\fmixed)\protect\eqrel_{A}\;\rightarrow\protect\lattice(\widetilde{\protect\orientedmatroid})$}

We next consider the labeling of the elements of $\poset(\fmixed)\eqrel_{A}$ by sign vectors.
For this, we use the connection between the pairs $(S,F)$ denoting cells of the extended patchworking complex and covectors established in Corollary~\ref{coro:more+general+covectors}.

Now, we look at the particular elimination system given by the fine mixed subdivision $\fmixed$. 
In the following proposition,
let $\lattice(\widetilde{\orientedmatroid})$ denote the poset of
non-zero covectors of $\widetilde{\orientedmatroid}$. Let $\poset(\fmixed)\eqrel_{A}$
be the poset as in Section \ref{subsec:Quotients_of_P(S)}.

Recall from Section~\ref{subsec:Quotients_of_P(S)} the system of mixed signs $\left\{ 0,+,-,\pm\right\}$.
Using this as an intermediate step, one sees that the following map extending the map $\psi_A$ of Definition~\ref{def:triang+to+covectors+OM} is well-defined on its equivalence classes. 

\begin{definition} \label{def:quotient+to+OM}

Define the map $\posmap:\poset(\fmixed)\eqrel_{A}\,\rightarrow\{-1,0,1\}^{\widetilde{\ground}}$ by $\posmap_A(S,F)=(S,\psi_A(S,F))$. 

\end{definition}

As we fix $A$ most of the time, we just set $\posmap(S,F) = \posmap_A(S,F)$. 

\begin{example}[{Ex.~\ref{ex:quotients+signed+forests} continued}]
Recall that we could identify the equivalence classes of the four sign
vectors $S_{\ell}A_{F_{\ell}}$ for $\ell\in[4]$ with $(\pm,-,+)$,
$(\pm,-,+)$, $(\pm,-,\pm)$ and $(\pm,-,-)$. This shows that the
images of the three equivalence classes of $(S_{\ell},F_{\ell})$ (as $(S_{1},A_{F_{1}})\sim_{A}(S_{2},A_{F_{2}}$))
are the sign vectors 
\[
(-,+,+,0,-,+),(-,+,+,0,-,0),(+,+,+,0,-,-).
\]
\end{example}

Note that a similar map was used in~\cite[\S 6]{Horn1} to prove the
representation theorem for tropical oriented matroids. 

%We show the next claim by reducing it to the results from Section~\ref{sec:topes+covectors}.
  Since $\varphi$ is constant on the equivalence classes of $\eqrel_{A}$, we just fix an element $(S,F) \in \poset(\fmixed)$.
  With this, we associate the bipartite graph $T$ on $\rows \sqcup \widetilde{\ground}$ having the edges of $F$ and edges between the nodes of $\rows$ and its copy $\widetilde{\rows}$ within $\widetilde{\ground}$ for each element $\rows$ in the support of $S$.
  Now, the claim follows from Proposition~\ref{prop:covectors+from+topes}. 

\begin{corollary}
  For all $(S,F)\in\poset(\fmixed)\eqrel_{A}$,
we have $\varphi(S,F)\in\lattice(\widetilde{\orientedmatroid})$.
\end{corollary}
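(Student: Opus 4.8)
The plan is to reduce to a single representative and invoke Proposition~\ref{prop:covectors+from+topes}. Since $\varphi$ is already constant on the equivalence classes of $\eqrel_{A}$, it suffices to fix any $(S,F)\in\poset(\fmixed)$ and show that $\varphi(S,F)=(S,\psi_A(S,F))$ is a non-zero covector of $\widetilde{\orientedmatroid}$. Recall that $(S,F)\in\poset(\fmixed)$ means $F$ is a forest with $\deg_F(j)\geq1$ for every $j\in\ground$ lying inside some spanning tree $\Gamma$ of $\mathcal{T}$, and that $\supp(S)\supseteq\supp_{\rows}(F)$.

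First I would form the augmented bipartite graph $T$ on $\rows\sqcup\widetilde{\ground}$ obtained from $F$ by adjoining the matching edge $(i,\tilde{\imath})$ between each $i\in\supp(S)$ and its copy $\tilde{\imath}\in\widetilde{\rows}$, mirroring the passage from $\mathcal{T}$ to $\widetilde{\mathcal{T}}$. The key identity is $\psi_{\widetilde A}(S,T)=(S,\psi_A(S,F))=\varphi(S,F)$: since $\widetilde A=(I_{\rows}\mid A)$, every column of $S\widetilde A_T$ indexed by $\widetilde{\rows}$ is either zero (when $i\notin\supp(S)$) or has a single nonzero entry, equal to $S_i$, while every column indexed by $j\in\ground$ coincides with the corresponding column of $SA_F$ because the adjoined edges miss $\ground$ and $\widetilde A$ restricts to $A$ on $\rows\times\ground$. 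Reading off $\psi$ column by column gives the claimed equality.

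Next I would verify the hypotheses of Proposition~\ref{prop:covectors+from+topes} for $T$: it is a subgraph of the tree $\widetilde\Gamma\in\widetilde{\mathcal{T}}$ obtained from $\Gamma$ by adding all edges $(i,\tilde{\imath})$, $i\in\rows$; no node of $\ground$ is isolated in $T$ because none is in $F$; a node $\tilde{\imath}\in\widetilde{\rows}$ is isolated in $T$ precisely when $i\notin\supp(S)$, which by $\supp(S)\supseteq\supp_{\rows}(F)$ forces $\deg_F(i)=0$ and hence $i$ isolated in $T$; and the set of non-isolated nodes of $T$ in $\rows$ is $\supp_{\rows}(F)\cup\supp(S)=\supp(S)$, as required. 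Proposition~\ref{prop:covectors+from+topes} then gives that $\varphi(S,F)=\psi_{\widetilde A}(S,T)$ is a covector of $\widetilde{\orientedmatroid}$; it is non-zero because axiom (E1) forces $F$ to meet every node of $\ground$, so $\supp_{\rows}(F)\neq\emptyset$ and hence $S\neq\zero$, making the $\widetilde{\rows}$-part of $\varphi(S,F)$ nonzero.

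The argument is essentially the augmented analogue of Corollary~\ref{coro:more+general+covectors}, and I do not expect a genuine obstacle: the only points needing care are the compatibility of the isolated-node conditions of Proposition~\ref{prop:covectors+from+topes} with the support constraint $\supp(S)\supseteq\supp_{\rows}(F)$ built into $\poset(\fmixed)$, and the bookkeeping verifying that the identity block of $\widetilde A$ reproduces $S$ exactly.
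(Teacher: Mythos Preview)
Your proposal is correct and follows essentially the same approach as the paper: fix a representative $(S,F)$, augment $F$ by the matching edges $(i,\tilde\imath)$ for $i\in\supp(S)$, and invoke Proposition~\ref{prop:covectors+from+topes}. The paper's argument is simply a terse three-sentence version of yours; your additional verification of the isolated-node hypotheses and the non-vanishing of $\varphi(S,F)$ fills in details the paper leaves implicit.
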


\begin{proposition} The map $\varphi\colon\poset(\fmixed)\eqrel_{A}\;\rightarrow\lattice(\widetilde{\orientedmatroid})$
is a poset map.

\end{proposition}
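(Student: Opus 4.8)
The plan is to reduce the statement, using homogeneity, to a direct comparison of the sign patterns occurring in the columns of $SA_{F}$ and $SA_{G}$. First I would use the description of the quotient order: if $(S,F)\eqrel_{A}\leq(T,G)\eqrel_{A}$ in $\poset(\fmixed)\eqrel_{A}$, then by Proposition~\ref{prop:eq_rel_is_P_homog} (homogeneity of $\sim_{A}$) we may choose the representatives so that $(S,F)\leq(T,G)$ already holds in $\poset(\fmixed)$, that is, so that $S\leq T$ and $F\supseteq G$. Since $\varphi$ is constant on $\sim_{A}$-classes, it then suffices to prove that $(S,F)\leq(T,G)$ in $\poset(\fmixed)$ implies $\varphi(S,F)\leq\varphi(T,G)$ in $\lattice(\widetilde{\orientedmatroid})$.

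Writing $\varphi(S,F)=(S,\psi_{A}(S,F))$, I would split this sign vector into its $\widetilde{\rows}$-block and its $\ground$-block. On $\widetilde{\rows}$ the map is just $S\mapsto S$, so there the required inequality is exactly the hypothesis $S\leq T$. For the $\ground$-block, fix $j\in\ground$. Since $(S,F)\in\poset(\fmixed)$ we have $F_{j}\subseteq\supp_{\rows}(F)\subseteq\supp(S)$, so every entry $(SA_{F})_{i,j}$ with $i\in F_{j}$ is nonzero and equal to $S_{i}A_{i,j}$; hence the set of signs occurring in column $j$ of $SA_{F}$ is precisely $\{S_{i}A_{i,j}:i\in F_{j}\}$, and similarly for $SA_{G}=TA_{G}$. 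From $F\supseteq G$ we get $G_{j}\subseteq F_{j}$, and for $i\in G_{j}\subseteq F_{j}\subseteq\supp(S)$ the relation $S\leq T$ forces $S_{i}=T_{i}$; therefore the set of signs in column $j$ of $TA_{G}$ equals $\{S_{i}A_{i,j}:i\in G_{j}\}$, which is contained in the corresponding set for $SA_{F}$.

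Finally I would feed this containment into Definition~\ref{def:triang+to+covectors+OM}. If $\psi_{A}(S,F)_{j}=0$ there is nothing to check, as $0$ lies below every sign. If $\psi_{A}(S,F)_{j}=+$, then column $j$ of $SA_{F}$ has sign set $\{+\}$, so the sign set of column $j$ of $TA_{G}$ is contained in $\{+\}$; it is nonempty by (E1) applied to $G$, hence equal to $\{+\}$, so $\psi_{A}(T,G)_{j}=+$ as well. The case $\psi_{A}(S,F)_{j}=-$ is symmetric. Thus $\psi_{A}(S,F)\leq\psi_{A}(T,G)$ coordinatewise, which together with $S\leq T$ yields $\varphi(S,F)\leq\varphi(T,G)$, as desired. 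I do not anticipate a genuine obstacle: the only delicate point is the reduction in the first step, where it is exactly the fact that $\varphi$ is well defined on $\sim_{A}$-classes that lets us replace the inequality of equivalence classes by an honest inequality of representatives in $\poset(\fmixed)$; everything after that is a routine inspection of column signs.
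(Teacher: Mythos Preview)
Your argument is correct and follows essentially the same route as the paper's proof: reduce to representatives $(S,F)\leq(T,G)$ in $\poset(\fmixed)$, observe $S\leq T$ handles the $\widetilde{\rows}$-block, and note that passing from $SA_{F}$ to $TA_{G}=SA_{G}$ only removes nonzero entries from each column (while keeping each column nonempty by~(E1)), so $\psi_{A}(S,F)\leq\psi_{A}(T,G)$. The paper's version is terser and does not spell out the homogeneity reduction explicitly, but the content is the same.
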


\begin{proof}

Suppose $(S,F)\leq(S',F')$ in $\poset$. Let $(S,X)=\varphi(S,F)$,
and let $(S',X')=\varphi(S',F')$. Then $S\leq S'$, and because $F\supseteq F'$,
the passage from $SA_{F}$ to $SA_{F'}$ only decreases the number
of non-zero entries in each column of $SA_{F}$. However $SA_{F'}$
still has at least one non-zero entry in each column. From this we
conclude $X\leq X'$, and therefore $\posmap$ respects order.\end{proof}

Recall that a poset $\poset$ is a \emph{sphere} if its order complex is a sphere; see Definition~\ref{order_complex}.

\begin{theorem}[Borsuk--Ulam]\label{thm:Borsuk_Ulam}

Let $\poset,\qoset$ be posets such that both are homeomorphic to
$S^{d-1}$ and both are equipped with a fixed-point free involutive
automorphism $x\mapsto-x$. Let $\posmap:\poset\rightarrow\qoset$
be a poset map satisfying $\posmap(-x)=-\posmap(x)$ for all $x\in\poset$.
Then $\posmap$ is surjective.

\end{theorem}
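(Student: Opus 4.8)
The plan is to reduce the claim to the classical Borsuk–Ulam theorem for continuous maps between spheres, using the standard dictionary between posets, their order complexes, and $\ZZ/2$-equivariant topology. First I would pass from the poset map $\posmap\colon\poset\to\qoset$ to the induced simplicial map on order complexes $\Delta(\poset)\to\Delta(\qoset)$: since $\posmap$ is order-preserving, it sends chains of $\poset$ to chains (possibly with repetitions, i.e.\ to faces of smaller dimension) of $\qoset$, hence induces a simplicial map of abstract simplicial complexes, and therefore a continuous map $f\colon\norm{\poset}\to\norm{\qoset}$ on geometric realizations. By hypothesis $\norm{\poset}$ and $\norm{\qoset}$ are both homeomorphic to $S^{d-1}$.

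Next I would promote this to an equivariant statement. The involution $x\mapsto-x$ on $\poset$ is by hypothesis an automorphism of the poset, so it induces a simplicial automorphism of $\Delta(\poset)$, hence a homeomorphism $a\colon\norm{\poset}\to\norm{\poset}$; fixed-point freeness of the involution on the poset, together with the fact that a simplicial automorphism fixes a point of the realization only if it fixes some vertex (more precisely, fixes the barycenter of some invariant face, forcing an invariant nonempty chain, whose minimum would then be a fixed point of the poset involution), shows $a$ is a free involution on $\norm{\poset}\cong S^{d-1}$. The same applies to $\qoset$, giving a free involution $b$. The compatibility $\posmap(-x)=-\posmap(x)$ says exactly that $f\circ a = b\circ f$, i.e.\ $f$ is $\ZZ/2$-equivariant. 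Now apply the Borsuk–Ulam theorem in the form: \emph{an equivariant continuous map between free $\ZZ/2$-spaces, both homotopy equivalent to $S^{d-1}$ with the antipodal action, cannot land in a proper subcomplex}; concretely, if $f$ were not surjective its image would miss a point $p$, hence (by equivariance) also $b(p)$, so $f$ would factor through $S^{d-1}\setminus\{p,b(p)\}$, which is $\ZZ/2$-equivariantly homotopy equivalent to a lower-dimensional free $\ZZ/2$-complex; composing with the collapse to that complex and then any equivariant map to $S^{d-2}$ (with antipodal action) yields an equivariant map $S^{d-1}\to S^{d-2}$, contradicting Borsuk–Ulam. Therefore $f$ is surjective.

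Finally I would transfer surjectivity back to the poset level. Since $f=\norm{\posmap}$ is surjective and simplicial, every vertex of $\Delta(\qoset)$ lies in the image: indeed, a simplicial map whose realization is surjective must hit every vertex, because the open star of a vertex is an open set not contained in the realization of any proper subcomplex, so some simplex of $\Delta(\poset)$ must map onto a simplex containing that vertex, and as $\posmap$ is order-preserving the corresponding chain of $\poset$ contains an element mapping to that vertex. Hence $\posmap\colon\poset\to\qoset$ is surjective on elements, which is the assertion.

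The main obstacle is the second paragraph: carefully justifying that the combinatorial involutions give rise to genuinely free involutions on the realizations, and invoking Borsuk–Ulam in the right ``no equivariant map $S^{d-1}\to S^{d-2}$'' form rather than the more familiar ``$f(x)=f(-x)$ for some $x$'' form — the two are equivalent, but the bookkeeping connecting non-surjectivity of a simplicial map to the existence of a forbidden equivariant map needs to be done with some care (one may instead phrase it via $\ZZ/2$-cohomological index or the Ljusternik–Schnirelmann–Borsuk covering theorem). Everything else — the passage to order complexes, simpliciality of the induced map, and the recovery of elementwise surjectivity — is routine.
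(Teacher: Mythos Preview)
The paper does not prove this theorem; it is stated as a known result (the poset version of the classical Borsuk--Ulam theorem) and simply invoked in the proof of Corollary~\ref{cor:poset_map_isomorphism}. There is therefore nothing to compare your argument against.

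That said, your derivation is correct. The passage to order complexes is standard, and your argument that the involution on $\norm{\poset}$ is free is right: a fixed point of the simplicial realization would lie in the interior of a unique simplex, forcing that simplex (a chain in $\poset$) to be setwise invariant, and an order-preserving involution of a finite chain is the identity, contradicting fixed-point freeness. The final step---recovering surjectivity of $\posmap$ on elements from surjectivity of $\norm{\posmap}$---is also fine: a surjective simplicial map hits every vertex.

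The only place requiring genuine care, as you acknowledge, is the equivariant step. Your sketch is correct in spirit but the sentence ``$S^{d-1}\setminus\{p,b(p)\}$ is $\ZZ/2$-equivariantly homotopy equivalent to a lower-dimensional free $\ZZ/2$-complex'' is doing a lot of work and is not obviously true as stated (the free involution $b$ need not be standard). The clean way through is via the cohomological $\ZZ/2$-index: for any free involution on $S^{d-1}$ the quotient has $\ZZ/2$-cohomology ring $\ZZ/2[w]/(w^{d})$, so the index is $d-1$; removing an orbit $\{p,b(p)\}$ passes to removing a single point from the quotient, which kills $H^{d-1}$ and hence drops the index to at most $d-2$. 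Since equivariant maps cannot increase the index, non-surjectivity of $f$ is impossible. Alternatively one may argue that the induced map on quotients pulls back $w_1$ to $w_1$ and hence has nonzero degree mod $2$, so $f$ has odd degree and is surjective.
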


\begin{corollary}\label{cor:poset_map_isomorphism}Assuming $\poset(\fmixed)\eqrel_{A}$ is a $(d-1)$-sphere, the map $\varphi:\poset(\fmixed)\eqrel_{A}\rightarrow\lattice(\widetilde{\orientedmatroid})$
is an isomorphism.

\end{corollary}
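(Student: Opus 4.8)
The plan is to show that $\varphi$ is a bijection which also reflects order; since $\varphi$ is already known to be a poset map, this makes it an isomorphism. Surjectivity will come from the Borsuk--Ulam theorem (Theorem~\ref{thm:Borsuk_Ulam}). Both posets are spheres of dimension $d-1$: the source $\poset(\fmixed)\eqrel_{A}$ by the standing hypothesis, and the target $\lattice(\widetilde{\orientedmatroid})$ because the order complex of the nonzero covectors of a rank-$d$ oriented matroid is PL homeomorphic to $S^{d-1}$ by the Topological Representation Theorem. Both carry the antipodal involution ($(S,F)\mapsto(-S,F)$ on the source, descending to the quotient since negating $S$ swaps the signs $+\leftrightarrow-$ in every block of $SA_{F}$; and $X\mapsto -X$ on the target), and $\varphi$ intertwines them because $\psi_{A}(-S,F)=-\psi_{A}(S,F)$. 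One should record that these involutions are fixed-point free: on the source, $S=-S$ forces $S=\zero$, hence $\supp_{\rows}(F)=\emptyset$, contradicting (E1); on the target, $\zero\notin\lattice(\widetilde{\orientedmatroid})$. Borsuk--Ulam then gives that $\varphi$ is onto.

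Injectivity should be elementary. Since $\varphi(S,F)=(S,\psi_{A}(S,F))$, the vector $S$ is recovered, so it suffices to fix $S$ and recover the $\sim_{A}$-class of $F$ from $\psi_{A}(S,F)$. For each $j\in\ground$, the $j$-th column of $SA_{F}$ is nonzero, since $(S,F)\in\poset(\fmixed)$ forces $\supp(S)\supseteq\supp_{\rows}(F)$ and $A$ has no zero entries; therefore $\psi_{A}(S,F)_{j}$ equals $+$, $-$, or $0$ precisely according to whether the set of signs occurring in that column is $\{+\}$, $\{-\}$, or $\{+,-\}$. Ranging over all $j$, this is exactly the data that determines $SA_{F}$ up to $\sim_{A}$, so $\varphi$ is injective.

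The remaining point, and the one I expect to require the most care, is that a bijective poset map between two posets that are spheres of the same dimension must reflect order --- this is false without the sphere hypothesis, since the map could in principle be a ``folding'' onto a proper full-dimensional subcomplex. The argument is topological: $\varphi$ induces a simplicial map $\Delta(\varphi)$ between the order complexes $\Delta(\poset(\fmixed)\eqrel_{A})$ and $\Delta(\lattice(\widetilde{\orientedmatroid}))$, and this map is injective on the geometric realizations because $\varphi$ is injective on elements; hence $\norm{\poset(\fmixed)\eqrel_{A}}\cong S^{d-1}$ is embedded as a subcomplex of $\norm{\lattice(\widetilde{\orientedmatroid})}\cong S^{d-1}$. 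By invariance of domain this embedded copy is open, and being also closed inside the connected sphere it is everything, so $\Delta(\varphi)$ is an isomorphism of simplicial complexes. Combined with $\varphi$ being order-preserving, this forces $\varphi$ to reflect order: if $\varphi(x)<\varphi(y)$ then $\{\varphi(x),\varphi(y)\}$ is a face of $\Delta(\lattice(\widetilde{\orientedmatroid}))$, hence $\{x,y\}$ is a face of $\Delta(\poset(\fmixed)\eqrel_{A})$, so $x$ and $y$ are comparable, and the direction of comparability is then pinned down by order-preservation and injectivity. This completes the proof that $\varphi$ is an isomorphism.
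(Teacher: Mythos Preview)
Your proof is correct and follows the same approach as the paper's: injectivity from the fact that $(S,\psi_A(S,F))$ determines the generalized sign vector $SA_F\eqrel\in\{0,+,-,\pm\}^{\Pi}$, and surjectivity from Borsuk--Ulam applied to the antipodal involutions. Your third paragraph, establishing that the bijective poset map $\varphi$ is actually order-reflecting via invariance of domain on the order complexes, supplies a detail that the paper's proof leaves implicit; the paper records only bijectivity and declares $\varphi$ an isomorphism, so your argument here is a genuine (and correct) addition rather than a deviation.
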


\begin{remark}
We show that $\poset(\fmixed)\eqrel_{A}$ is indeed a $(d-1)$-sphere in Section~\ref{sec:putting_it_all_together}.
\end{remark}

\begin{proof}

For $(S,F)\in\poset(\fmixed)\eqrel_{A}$, the interpretation of $SA_{F}\eqrel$
as a generalized sign vector in $\{0,-,+,\pm\}^{\Pi}$ shows that
$\varphi$ is injective. To see that $\varphi$ is surjective, we
simply note that $(S,F)\mapsto-(S,F):=(-S,F)$ is a fixed-point free
involutive automorphism of $\poset(\fmixed)$ which descends to $\poset(\fmixed)\eqrel_{A}$,
while $X\mapsto-X$ is one of $\lattice(\widetilde{\orientedmatroid})$.
Furthermore, by definition of $\varphi$, we have $\varphi(-S,F)=(-S,-X)=-(S,X)=-\varphi(S,F)$.
As $\widetilde{\orientedmatroid}$ has rank $d$, the poset $\lattice(\widetilde{\orientedmatroid})$ is a $(d-1)$-sphere by Theorem~\ref{thm: arrangement of pseudo from order cpx}, and hence the conclusion follows from the Borsuk-Ulam Theorem.\end{proof}

\subsection{Pseudosphere Arrangements from Regular Cell Complexes}

The following result shows how to get pseudosphere arrangements from
regular cell complexes:

\begin{theorem}[{\cite[Theorem 4.3.3, Proposition 4.3.6]{BLSWZ:1993}}]

\label{thm: arrangement of pseudo from order cpx}Let $\orientedmatroid$
be an oriented matroid of rank $d$ on the ground set $E$. Let $\Delta$ be a
regular cell complex with face poset $\poset$, such that there is
a poset isomorphism $\poset\simeq\covectors(\orientedmatroid)$. Thus
each cell $\sigma_{X}$ of $\Delta$ is labeled by some non-zero
covector $X$ of $\orientedmatroid$. For each $k\in E$, define the
subcomplex
\[
\Delta_{k}:=\left\{ \sigma_{X}\in\Delta:X_{k}=0\right\} .
\]
 Then $\norm{\Delta}$ is a $(d-1)$-sphere, and the spaces $\norm{\Delta_{k}}$
ranging over all $e\in E$ form an arrangement of pseudospheres within
$\norm{\Delta}$ representing $\orientedmatroid$.

\end{theorem}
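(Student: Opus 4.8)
This statement is the Topological Representation Theorem of Folkman and Lawrence, repackaged so that the representing pseudospheres appear as genuine subcomplexes of a given regular cell complex. Accordingly, the plan is to deduce it from two standard ingredients: (i) the existence, for any oriented matroid $\orientedmatroid$ of rank $d$ on $E$, of a signed PL pseudosphere arrangement on $S^{d-1}$ representing $\orientedmatroid$; and (ii) the classical fact that a regular cell complex is determined up to cellular homeomorphism by its face poset, via the identification of its first barycentric subdivision with the order complex of that poset.

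\emph{Building a model on the standard sphere.} First I would invoke the Topological Representation Theorem to fix a signed arrangement $(S_k : k\in E)$ of pseudospheres on $S^{d-1}$ representing $\orientedmatroid$. It induces a regular cell decomposition $\Gamma$ of $S^{d-1}$ whose closed cells are the nonempty intersections $\bigcap_{k\in E}(S_k)^{X_k}$ as $X$ ranges over the nonzero covectors of $\orientedmatroid$, where $(S_k)^{0}=S_k$ and $(S_k)^{+},(S_k)^{-}$ denote the two closed sides of $S_k$. By construction $\Gamma$ is a PL $(d-1)$-sphere (as required by the conventions here), its face poset is $\covectors(\orientedmatroid)$ with the cell $\sigma_X$ matching the covector $X$, and for each $k$ one has $S_k=\norm{\Gamma_k}$ for $\Gamma_k:=\{\sigma_X\in\Gamma:X_k=0\}$.

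\emph{Transporting the picture to $\Delta$.} Since $\Delta$ and $\Gamma$ are regular cell complexes whose face posets are both isomorphic to $\covectors(\orientedmatroid)$, composing the canonical homeomorphisms $\norm{\Delta}\cong\norm{\poset}$ and $\norm{\poset(\Gamma)}\cong\norm{\Gamma}$ (each identifying a regular cell complex with the realization of the order complex of its face poset, i.e.\ with its first barycentric subdivision) with the simplicial isomorphism of order complexes induced by the poset isomorphism yields a PL homeomorphism $h\colon\norm{\Delta}\to\norm{\Gamma}=S^{d-1}$ that carries each cell $\sigma_X$ of $\Delta$ onto the correspondingly labelled cell of $\Gamma$. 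In particular $\norm{\Delta}$ is a PL $(d-1)$-sphere. Moreover $h$ restricts to a homeomorphism $\norm{\Delta_k}\to\norm{\Gamma_k}=S_k$ for each $k\in E$, and pulling back the sign data of $S_k$ along $h$ (the positive side being the union of the open cells $\sigma_X^{\circ}$ with $X_k=+$, and likewise for the negative side) exhibits $(\norm{\Delta_k}:k\in E)$ as an arrangement of pseudospheres in $\norm{\Delta}$ carrying, cell by cell, the same covector data as $(S_k:k\in E)$; hence it represents $\orientedmatroid$.

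\emph{Where the care goes.} There is no single hard step here, only bookkeeping, and this is where I would be careful. One must check that the comparison homeomorphism $h$ can be taken PL, so that the $\norm{\Delta_k}$ satisfy the PL requirements of Definition~\ref{def:pseudosphere+arrangement}; this is immediate because a regular cell complex is PL-homeomorphic to its first barycentric subdivision. One must also verify that the property ``is a pseudosphere arrangement representing $\orientedmatroid$'' is invariant under a cellular homeomorphism respecting covector labels, which is a direct unwinding of Definitions~\ref{def:pseudosphere+arrangement} and~\ref{def:covector}. The only degenerate case to flag is a loop $k$ of $\orientedmatroid$, where $X_k=0$ for every covector, $\Delta_k=\Delta$, and the ``pseudosphere'' $\norm{\Delta_k}$ is all of $\norm{\Delta}$; this is harmless and does not arise for the loop- and coloop-free uniform oriented matroids produced by polyhedral matching fields in this paper.
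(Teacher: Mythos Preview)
Your proposal is correct and follows precisely the route the paper indicates. The paper does not give its own proof of this theorem; it merely cites \cite[Theorem~4.3.3, Proposition~4.3.6]{BLSWZ:1993} and adds the remark that the statement is really the \emph{uniqueness} assertion of that theorem, whose proof traces back to \cite[Proposition~4.7.23]{BLSWZ:1993} (the fact that a regular cell complex is determined up to homeomorphism by its face poset via the order complex). Your two-step argument---existence of a pseudosphere model via the Topological Representation Theorem, then transport along the barycentric-subdivision homeomorphism induced by the poset isomorphism---is exactly this.
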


\begin{remark}

This theorem is really the uniqueness assertion of \cite[Theorem 4.3.3]{BLSWZ:1993},
whose proof can be traced back to \cite[Proposition 4.7.23]{BLSWZ:1993}.\end{remark}

\subsection{Putting it all Together}

\label{sec:putting_it_all_together}With all the pieces now in place, we are ready to prove Theorem \ref{thm:arrangement_of_pseudospheres_extended},
which asserts that our patchworking procedure yields a pseudosphere
representation of $\widetilde{\orientedmatroid}$.

\begin{proof}[Proof of Theorem \ref{thm:arrangement_of_pseudospheres_extended}]

As shown for Proposition~\ref{prop:toms_are_elimination_systems},
a fine mixed subdivision $\fmixed$ of $n\ssimplex_{d-1}$ gives rise
to an elimination system as in Definition \ref{def:elimination+systems}.
Abusing notation, we denote this elimination system also by $\fmixed$.
We let $\poset(\fmixed)$ be the poset of $\fmixed$ obtained by introducing
signs as in Definition~\ref{def:elim_poset}. 

Let $\poset(\Delta)$ denotes the face poset of $\Delta=\octfmixed^{\dual}$.
By Corollary \ref{cor:isomorphism_from_face_poset_to_combinatorial_poset},
we have $\poset(\Delta)\simeq\poset(\fmixed)$. Hence the poset quotient
$\poset(\fmixed)\eqrel_{A}$ induces a quotient $\poset(\Delta)\eqrel_{A}$.
By Theorem \ref{thm:elim_system_poset_admits_factorization}, then,
$\poset(\Delta)\eqrel_{A}$ admits a factorization $\poset(\Delta)=\poset_{0},\poset_{1},\ldots,\poset_{k}=\poset(\Delta)\eqrel_{A}$
into elementary quotients, such that the augmented poset $\lattice(\poset_{i})$
is a lattice for each $i=0,1,\ldots,k-1$.

As a polyhedral complex on the boundary of a $d$-dimensional polytope, $\octfmixed$
is a PL $(d-1)$-sphere. Hence, by Proposition \ref{prop:dual cell cpx}, $\Delta=\octfmixed^{\dual}$
is also a PL $(d-1)$-sphere. In particular, by Proposition \ref{prop:pl_sphere_cells_are_pl_balls},
each cell $\sigma^{\dual}$ in $\Delta$ is a PL ball. It follows,
by Corollary \ref{cor:factorization_implies_quotient_complex_is_regular},
that $\Delta\eqrel_{A}$ is a regular cell complex with face poset
$\poset(\Delta)\eqrel_{A}$. In particular, $\poset(\Delta)\eqrel_{A}$ is a $(d-1)$-sphere.

By Corollary \ref{cor:poset_map_isomorphism}, we have isomorphisms
$\poset(\Delta)\eqrel_{A}\,\simeq\poset(\fmixed)\eqrel_{A}\,\simeq\lattice(\widetilde{\orientedmatroid})$.
For $k\in\widetilde{\ground}$, define the subcomplex
\[
(\Delta\eqrel_{A})_{k}:=\left\lbrace\uni(\sigma_{(S,F)}^{\dual}\eqrel_{A})\in\Delta\eqrel_{A}\;:\,\varphi(S,F)_{k}=0\right\rbrace
\]
of $\Delta\eqrel_{A}$.
Now, Theorem \ref{thm: arrangement of pseudo from order cpx} implies that the spaces $\norm{(\Delta\eqrel_{A})_{k}}$ ranging over
all $k\in\widetilde{\ground}$ form an arrangement of pseudospheres
within $\norm{\Delta\eqrel_{A}}=\norm{\Delta}$ representing~$\widetilde{\orientedmatroid}$.

It remains to show that $\norm{(\Delta\eqrel_{A})_{k}}=\norm{\Delta_{k}}$
for all $k\in\widetilde{\ground}$, where $\Delta_{k}$ is a subcomplex of $\Delta$ defined in (\ref{eq:Delta_i}) and (\ref{eq:Delta_j}). For this note that the closed cells
of $\Delta\eqrel_{A}$, and hence all subcomplexes of $\Delta\eqrel_{A}$,
each consist of a union of members of~$\Delta$. Hence, it suffices
to show that for all $\sigma^{\dual}\in\Delta$ and $k\in\widetilde{\ground}$,
we have $\sigma^{\dual}\subseteq\norm{(\Delta\eqrel_{A})_{k}}$ if
and only if $\sigma^{\dual}\in\Delta_{k}$.

For $\sigma^{\dual}\in\Delta$ and $k\in\widetilde{E}$, we have $\sigma^{\dual}\subseteq\norm{(\Delta\eqrel_{A})_{k}}$
if and only if there exists $(S,F)\in\poset(\fmixed)$ such that $\varphi(S,F)_{k}=0$
and
\[
\sigma^{\dual}\subseteq\bigcup_{(S,G)\,\sim_{A}\,(S,F)}\sigma_{(S,G)}^{\dual}.
\]
As $\Delta$ is a regular cell complex, the interiors of the balls
in $\Delta$ are disjoint, and so the above containment holds true
if and only if $\sigma^{\dual}=\sigma_{(S,F)}^{\dual}$ for some $(S,F)\in\poset(\fmixed)$
such that $\varphi(S,F)_{k}=0$. If $k\in\rows$, then $\varphi(S,F)_{k}=0$
iff $S_{k}=0$. If $k\in\ground$, we have $\varphi(S,F)_{k}=0$ if
and only if there exist $(i,k),(\ell,k)\in F$ such that $S_{i}A_{i,k}=-S_{\ell}A_{\ell,k}\neq0$.
In either case, we conclude $\sigma^{\dual}\subseteq\norm{(\Delta\eqrel_{A})_{k}}$
if and only if $\sigma^{\dual}\in\Delta_{k}$.\end{proof}

\medskip

{\sc Acknowledgement.} The first author was funded by the Deutsche Forschungsgemeinschaft (DFG, German Research Foundation) under Germany's Excellence Strategy -- The Berlin Mathematics Research Center MATH+(EXC-2046/1, project ID: 390685689).
He is grateful to Josephine Yu for helpful discussions on patchworking, in particular pointing out the reference \cite{Sturmfels:1994a}.
This project has received funding from the European Research Council (ERC) under the European Union's Horizon 2020 research and innovation programme (grant agreement n° ScaleOpt-757481).
%The second author was supported by the European Research Council (ERC) Starting Grant ScaleOpt, No.~757481.
The second author also thanks Xavier Allamigeon and Mateusz Skomra for inspiring discussions and Ben Smith for helpful comments.
The third author was supported by Croucher Fellowship for Postdoctoral Research and Netherlands Organisation for Scientific Research Vici grant 639.033.514 during his affiliation to Brown University and University of Bern, respectively.
He also thanks Cheuk Yu Mak for pointing out \cite{RubermanStarkton:2019}.
The second and third author both thank the {\em Tropical Geometry, Amoebas, and Polyhedra} semester program at Institut Mittag-Leffler for the introduction that started the project.
The authors thank Laura Anderson and Jesus De Loera for comments on our work.

% -------------------------------- bibliography-----------------------------
\bibliographystyle{amsplain}
\bibliography{pmf}

\end{document}